\documentclass[11pt]{amsart}
\usepackage[T1]{fontenc}
\usepackage[latin1]{inputenc}
\usepackage{typearea}
\usepackage{geometry}
\usepackage{ulem}
\usepackage{amsmath}
\usepackage{amssymb}
\usepackage{latexsym}
\usepackage{enumerate}
\usepackage{amsthm}
\usepackage[all]{xy}
\usepackage{hhline}
\usepackage{epsf} 
\usepackage{cite}
\usepackage{verbatim}
\usepackage{mathtools}
\usepackage{comment}
\usepackage{dsfont}
\usepackage{mathrsfs}
\usepackage{hyperref}

\usepackage{color}

\newtheorem{theorem}{{\sc Theorem}}[section]
\newtheorem{cor}[theorem]{{\sc Corollary}}

\newtheorem{lemma}[theorem]{{\sc Lemma}}
\newtheorem{prop}[theorem]{{\sc Proposition}}

\theoremstyle{remark}
\newtheorem{remark}[theorem]{{\sc Remark}}

\theoremstyle{definition}

\newcommand{\R}{\mathbb{R} }
\newcommand{\N}{\mathbb{N} }
\newcommand{\X}{\mathbb{X}}

\newcommand{\A}{\mathcal{A}}
\newcommand{\B}{\mathcal{B}}
\newcommand{\F}{\mathcal{F}}
\newcommand{\G}{\mathcal{G}}

\newcommand{\W}{\mathcal{W}}
\newcommand{\K}{\mathcal{K}}
\newcommand{\Z}{\mathbb{Z}}

\newcommand{\im}{\textnormal{im}}

\newcommand{\Prob}{\mathbb{P}}
\newcommand{\E}{\mathbb{E}}

\newcommand{\Pot}{\mathcal{P}}
\newcommand{\Om}{\Omega}

\providecommand{\abs}[1]{\lvert #1\rvert}
\providecommand{\babs}[1]{\bigl\lvert #1\bigr\rvert}
\providecommand{\Babs}[1]{\Bigl\lvert #1\Bigr\rvert}
\providecommand{\Bbabs}[1]{\biggl\lvert #1\biggr\rvert}

\providecommand{\norm}[1]{\lVert #1\rVert}

\providecommand{\Enorm}[1]{\lVert #1\rVert_2}
\providecommand{\BEnorm}[1]{\Bigl\lVert #1\Bigr\rVert_2}

\DeclareMathOperator{\Var}{Var}

\DeclareMathOperator{\dom}{dom}
\DeclareMathOperator{\Cov}{Cov}

\DeclareMathOperator{\Lip}{Lip}

\DeclareMathOperator{\Id}{Id}

\DeclareMathOperator{\Inf}{Inf}

\renewcommand{\phi}{\varphi}
\renewcommand{\epsilon}{\varepsilon}
\newcommand{\eps}{\varepsilon}
\renewcommand{\rho}{\varrho}
\renewcommand{\P}{\Prob}
\renewcommand{\1}{\mathds{1}}

\begin{document}
\title[Malliavin calculus on product spaces]{On the Malliavin calculus on product spaces and an infinite de Jong theorem }
\author{Christian D\"obler}
\thanks{\noindent Mathematisches Institut der Heinrich Heine Universit\"{a}t D\"usseldorf\\
Email: christian.doebler@hhu.de\\
{\it Keywords:  Malliavin calculus, Ornstein-Uhlenbeck semigroup, product spaces,  carr\'{e}-du-champ operators, Mehler formula, de Jong CLT} }
\begin{abstract}  
We extend the Malliavin theory for $L^2$-functionals on product probability spaces that has recently been developed independently by Decreusefond and Halconruy (2019) and by Duerinckx (2021), by  characterizing the domains and investigating the actions of the three Malliavin operators in terms of the infinite Hoeffding decomposition in $L^2$, which we identify as the natural analogue of the famous Wiener-It\^{o} chaos decomposition on Gaussian and Poisson spaces. We further explore the corresponding Ornstein-Uhlenbeck semigroup in terms of its Mehler representation and prove new moment bounds for iterated gradients. As an illustration of the abstract framework, we prove an infinite version of the quantitative de Jong CLT that has recently been proved by G. Peccati and the author (2017) and by the author (2024). 

\end{abstract}

\maketitle

\section{Introduction}\label{intro}
\subsection{Motivation}
In the recent article \cite{DH} by Decreusefond and Halconruy, a version of Malliavin calculus for functionals on a product of countably many probability spaces has been developed. In particular, suitable versions of the \textit{Malliavin derivative} or \textit{gradient} $D$, its adjoint $\delta$, named \textit{divergence operator}, and the corresponding \textit{Ornstein-Uhlenbeck generator} or \textit{number operator} $L$ satisfying the crucial identity $L=-\delta D$ have been introduced. Important properties of the operators $D$ and $L$ have also been established independently by Duerinckx \cite{Duer}.

As has been remarked in \cite{DH}, nowadays, one main motivation for developing such a Malliavin structure on product spaces is to combine it with Stein's method of distributional approximation to derive suitable so-called \textit{Malliavin-Stein bounds} and, consequently, versions of such bounds in the context of normal and gamma approximation have been given in \cite{DH,Duer}. Historically, such Malliavin-Stein bounds have first been proved for functionals of Gaussian processes in the seminal paper \cite{NouPec09} by Nourdin and Peccati and, then, afterwards also for Poisson functionals \cite{PSTU, Schu16} and for functionals of a Rademacher sequence \cite{NPR, KRT1, KRT2}. We refer to the monographs \cite{NouPecbook} and \cite{RePe-book} for comprehensive introductions to the \textit{Malliavin-Stein method} on Gaussian and Poisson spaces, respectively. 
We refer to the constantly updated website \cite{Nweb} for a comprehensive list of research articles related to this line of research.

\subsection{Results and contributions of this work}
Despite the important foundations laid in \cite{DH} and \cite{Duer}, the theory of Malliavin calculus on product spaces still suffers from some shortcomings compared to the well-established theory in the corresponding  Gaussian \cite{Nualart}, Poisson \cite{Lastsa} and Rademacher \cite{Privault} situations. Most importantly, in \cite{DH} no general counterpart to the Wiener-It\^{o} chaos decomposition on those spaces is considered and, consequently,  
explicit descriptions of the domains of the operators $D,\delta$ and $L$ are missing. 
In particular, as in the well-studied situations mentioned above, it would be desirable to understand the actions of these three operators in terms of such a chaotic decomposition. The main purpose of the present paper is to further extend and complement the Malliavin theory for product probability spaces in order to fill these gaps. In particular, we provide a precise description of the domains of the three Malliavin operators $D,\delta$ and $L$. To this end, as a first step, we explicitly identify the infinite Hoeffding decomposition of an $L^2$ functional on a product space as the correct counterpart to the well-known Wiener-It\^{o} chaos decompositions in Gaussian, Poisson and Rademacher situations and study the actions of these three operators in terms of this decomposition. Moreover, we further explore the Ornstein-Uhlenbeck semigroup associated to $L$ via its chaotic and Mehler representations, thus providing analogues of important results in Gaussian and Poisson stochastic analysis.

More precisely, compared to the articles \cite{DH, Duer}, the main new theoretical achievements of this work include
\begin{enumerate}[1)]
\item new characterizations of the domains of the three Malliavin operators $D,\delta$ and $L$ in terms of the infinite Hoeffding decomposition reviewed in Subsection \ref{Hoeffding} and new formulae for the action of the operators $D$ and $L$ in terms of this decomposition,
\item a new Stroock type formula to recover the infinite Hoeffding decomposition of a functional by computing iterated derivatives,
\item new functional analytic properties of the Ornstein-Uhlenbeck operator $L$ and its pseudo-inverse $L^{-1}$,
\item an investigation of the Ornstein-Uhlenbeck semigroup associated to $L$, the corresponding Mehler formula and new moment bounds for iterated gradients composed with the pseudo-inverse $L^{-1}$ of $L$,
\item the introduction of a carr\'{e}-du-champ operator on general product spaces including an alternative representation that allows effectively to assess its non-diffusiveness and the proof of a corresponding integration-by-parts formula.
\end{enumerate} 

We remark that the infinite Hoeffding decomposition has implicitly been exploited in the proof of \cite[Lemma 2.4]{Duer} in order to derive crucial properties of $L$ and $L^{-1}$ without explicitly viewing it as the chaotic decomposition of functionals on product spaces.  \smallskip\\

From a pragmatic point of view, one further motivation for this article is to provide a reference for fundamental theoretical results that may be invoked in other more applied works on normal (and other distributional) approximations. Thus, in the follow-up works \cite{CuDoe26,Doe25c} the abstract theory developed in this paper and in the works \cite{DH, Duer} is combined with Stein's method in order to prove new abstract error bounds for the normal approximation of real-valued functionals defined on such spaces and its usefulness is supported by several relevant applications. 

As an illustration of the abstract theory, we further show here how the general framework may be exploited to (state and) prove an infinite version of the quantitative de Jong CLT that has recently been proved by G. Peccati and the author \cite{DP17} and by the author \cite{Doe23b}. We mention here that even the statement of such a result could not be given without using the concept of infinite Hoeffding decompositions.

Most of the material in the present work is taken from the unpublished manuscript \cite{Doe24} that I decided to split into several parts in order to keep a clearer focus in each of the individual papers. The present paper includes most results concerning the theoretical framework, whereas the works \cite{CuDoe26,Doe25c} (in preparation) deal with the derivation of normal approximation bounds and suitable applications in a Clark-Ocone and Carr\'{e}-du-champ context, respectively. 

The remainder of this article is structured as follows. In Section \ref{malliavin} our new theoretical findings concerning the Malliavin framework on product spaces are presented and proved. In Section \ref{dejong} we state and prove our novel infinite quantitative de Jong type CLT and, finally, in Section \ref{appendix} we give a probabilistic proof of the infinite Hoeffding decomposition in Proposition \ref{infhoeff} below.

\section{Malliavin calculus on product spaces}\label{malliavin}
In this section we review and extend the abstract Malliavin formalism for the product of countably many probability spaces  from \cite{DH}. Contrary to \cite{DH}, for ease of notation, we will assume throughout that the countable index set is given by the natural numbers $\N=\{1,2,\ldots\}$. The case of a general countably infinite index set $A$ may be reduced to this situation by choosing a fixed bijection $\phi:A\rightarrow\N$. Moreover, the situation of a product of only finitely many probability spaces is naturally included by adding countably many copies of a (trivial) one-point probability space. In the latter case, all infinite series appearing in the sequel further reduce to finite sums and the generally subtle problem of identifying the correct domain for the respective operators becomes trivial. 

\subsection{Setup and notation}\label{setup}
We begin by introducing the necessary notation. Thus, let $(E_n,\B_n,\mu_n)$, $n\in\N$, be an arbitrary sequence of probability spaces and denote by 
\[(E,\B,\mu):=\Bigl(\prod_{n\in\N}E_n,\bigotimes_{n\in\N}\B_n,\bigotimes_{n\in\N}\mu_n\Bigr)\]
the corresponding product space. Further suppose that, on a suitable abstract probability space $(\Om,\A,\P)$, 
\[\X=(X_n)_{n\in\N}:(\Om,\A)\rightarrow(E,\B)\quad\text{and}\quad\X'= (X_n')_{n\in\N}:(\Om,\A)\rightarrow(E,\B)\]
are two independent sequences, each distributed according to $\mu$. In particular, the $X_n$, $n\in\N$, are independent and $X_n$ has distribution $\mu_n$.
 Then, for each $j\in\N$, we construct the sequence $\X^{(j)}:=(X_n^{(j)})_{n\in\N}$ by 
\begin{equation*}
 X_n^{(j)}:=\begin{cases}
             X_j',&n=j\\
             X_n, &n\not=j,
            \end{cases}
\end{equation*}
that is we replace the $j$-th coordinate of $\X$ with that of $\X'$. Then, of course, each sequence $\X^{(j)}$ has again distribution $\mu$. In fact, the sequences $\X$ and $\X^{(j)}$ are even \textit{exchangeable}, that is, $(\X,\X^{(j)})$ has the same distribution as $(\X^{(j)},\X)$ for any $j\in\N$. With this construction at hand, we further let $\F:=\sigma(\X)$ and for $M\subseteq\N$ we let $\F_M:=\sigma(X_j,j\in M)$. For $n\in\N$ and $M=[n]:=\{1,\dotsc,n\}$ we write $\F_n:=\F_{[n]}=\sigma(X_1,\dotsc,X_n)$ and further let $\F_0:=\F_\emptyset=\{\emptyset,\Omega\}$. Thus, $\mathbb{F}:=(\F_n)_{n\in\N_0}$ is the canonical filtration of $\A$ generated by $\X$ and $\F=\sigma(\bigcup_{n\in\N_0}\F_n)$. We also denote by $\P_\F$ the restriction of $\P$ on $\F$. For $k\in\N$ we further let $\G_k:=\F_{\N\setminus\{k\}}=\sigma(X_j,j\not=k)$. 

Moreover, for some results in Subsection \ref{OU} below, we suppose that, on the same probability space $(\Om,\A,\P)$, a sequence $(Z_k)_{k\in\N}$ of independent random variables that is also independent of $(\X,\X')$ is defined in such a way that each $Z_k$, $k\in\N$, has the exponential distribution with mean one. Then, for any $t\in[0,\infty)$ we define the random sequence $\X(t)=(X_k(t))_{k\in\N}$ via 
\begin{equation*}
X_k(t):=\begin{cases}
X_k, & Z_k>t\\
X_k',& Z_k\leq t
\end{cases}
\end{equation*}
for each $k\in\N$. Note that $\X(t)$ has the same distribution as $\X$ for any $t\geq0$ and that $\X(0)=\X$. Moreover, it is not difficult to see that $(\X(t))_{t\geq0}$ is in fact a Markov process with state space $(E,\B)$.

As usual, by $L^r(\mu)$, $r\in[0,\infty)$, we denote the space of all measurabe functions $f:(E,\B)\rightarrow(\R,\B(\R))$ s.t. $\int_E |f|^rd\mu<\infty$. Furthermore, $L^\infty(\mu)$ denotes the class of all $\mu$-essentially bounded, $\B-\B(\R)$-measurable functions. Here and in what follows, $\B(\R)$ denotes the Borel-$\sigma$-field on $\R$.

As is customary, we do not distinguish between functions and equivalence classes of a.e. identical functions here. Moreover, for $r\in[0,\infty)$ we let 
$L^r_\X:=L^r(\Om,\F,\P)$ be the space of all $\sigma(\X)$-measurable random variables $F:\Om\rightarrow\R$ such that $\E|F|^r<\infty$ and, as usual, for $r\in[1,\infty)$ we let $\norm{F}_r:=(\E|F|^r)^{1/r}$. We further denote $L^\infty_\X:=L^\infty(\Om,\F,\P)$ the space of all $\P$-essentially bounded $F\in L^0(\Om,\F,\P)$ and, for $F\in L^\infty_\X$ we let $\norm{F}_\infty$ denote its essential supremum norm. By the factorization lemma, for any $F\in L^r_\X$, $r\in[0,\infty]$, there exists an $f\in L^r(\mu)$ such that $F=f\circ\X$. Such a function $f$ will be called a \textit{representative} of $F$ in what follows. Note that such an $f$ is $\mu$-a.s. unique.

 A random variable $F\in  L^2_\X$ is called \textit{cylindrical}, if $F$ is $\F_n$-measurable for some $n\in\N$, i.e. if $F$ only depends on finitely many coordinates of $\X$. Following \cite{DH} we denote by $\mathcal{S}$ the linear subspace of $ L^2_\X$ containing all cylindrical random variables.  

We further denote by $\kappa$ the counting measure on $(\N,\Pot(\N))$, i.e. $\kappa(B)=|B|$ for all $B\subseteq\N$ and below we will also make use of the product measure $\kappa\otimes\P_\F$ on the space 
$(\N\times\Omega,\Pot(\N)\otimes\F)$. Measurable functions defined on this space will be called \textit{processes} in what follows. Here, $\Pot(\N)$ denotes the power set of $\N$ and we further write $\Pot_{fin}(\N)$ for the (countable) collection of all finite subsets of $\N$. Note that a process $U$ may be identified with a sequence $(U_k)_{k\in\N}$ of $\F-\B(\R)$- measurable random variables $U_k$, $k\in\N$. The expectation operator with respect to $\P$ or $\P_\F$ will always be denoted by $\E$. For $p\in\N$ we denote by $\N^p_{\not=}$ (respectively, by $[n]^p_{\not=}$) the set of all tuples $(i_1,\dotsc,i_p)\in\N^p$ (all tuples $(i_1,\dotsc,i_p)\in[n]^p$) such that $i_j\not=i_l$ for all $j\not=l$.

\subsection{Infinite Hoeffding decompositions}\label{Hoeffding}
In this subsection we state a fundamental result about infinite Hoeffding decompositions for random variables in $L^2_\X$. 
To this end, we first review the Hoeffding decomposition for functions of finitely many independent random variables and introduce some additional useful notation that will be used throughout this work.
We refer the reader to the monographs \cite{serfling, major, KB-book, Lee} and to the papers \cite{KR, vitale, vanZwet} for basic facts about Hoeffding decompositions for functions of finitely many independent random variables.

To begin with, let us merely assume that $F\in L^1_\X$.
Then, we can define the corresponding \textit{L\'{e}vy martingale} $(F_n)_{n\in\N}$ with respect to the filtration $\mathbb{F}:=(\F_n)_{n\in\N}$ by 
\[F_n:=\E\bigl[F\,|\,\F_n\bigl]\,, \quad n\in\N\,.\]
From martingale theory it is well-known that, as $n\to\infty$, $F_n$ converges to $F$ $\Prob$-a.s. and in $L^1(\Prob)$. Furthermore, if in fact $F\in L^r_\X$ for some $r\in(1,\infty)$, then 
$\sup_{n\in\N}\E\abs{F_n}^r<\infty$ and the convergence also takes place in $L^r(\Prob)$. As $F_n$ is $\F_n$-measurable, by the factorization lemma, there exist measurable functions 
\begin{equation*}
 g_n: \Bigl(\prod_{j=1}^n E_j,\bigotimes_{j=1}^n \B_j\Bigr)\rightarrow\bigl(\R,\B(\R)\bigr)\,,\quad n\in\N\,,
\end{equation*}
such that $F_n=g_n(X_1,\dotsc,X_n)$ for each $n\in\N$. Hence, we have the following \textit{Hoeffding decomposition} of $F_n$:
\begin{equation}\label{hdyn}
 F_n=\sum_{M\subseteq[n]} F_{n,M}\,,
\end{equation}
where we constantly write $[n]:=\{1,\dotsc,n\}$ and where the \textit{Hoeffding components} $F_{n,M}$, $M\subseteq[n]$, are $\Prob$-a.s. uniquely determined by (a) and (b) as follows:
\begin{enumerate}[(a)]
 \item For each $M\subseteq[n]$, the random variable $F_{n,M}$ is measurable with respect to $\F_M=\sigma(X_j,j\in M)$.
 \item For all $M,K\subseteq[n]$ we have $\E[F_{n,M}\,|\,\F_K]=0$ $\Prob$-a.s. unless $M\subseteq K$.
\end{enumerate}
Note that, due to independence, for $M,K\subseteq[n]$ we always have that 
\begin{equation*}
 \E[F_{n,M}\,|\,\F_K]=\E[F_{n,M}\,|\,\F_{K\cap M}]
\end{equation*}
so that we could replace (b) with
\begin{enumerate}[(c)]
 \item For all $M,L\subseteq[n]$ such that $L\subsetneq M$ we have $\E[F_{n,M}\,|\,\F_L]=0$ $\Prob$-a.s.
\end{enumerate}

The following inclusion-exclusion type formula for the Hoeffding components is well known:
\begin{align}\label{hdform}
 F_{n,M}&=\sum_{L\subseteq M}(-1)^{\abs{M}-\abs{L}}\E\bigl[F_n\,\bigl|\,\F_L\bigr]\notag\\
 &=\sum_{L\subseteq M}(-1)^{\abs{M}-\abs{L}}\E\bigl[F\,\bigl|\,\F_L\bigr]\,,\quad M\subseteq[n]\,,
\end{align}
where the second identity follows from 
\begin{equation*}
 \E\bigl[F_n\,\bigl|\,\F_L\bigr]=\E\Bigl[\E\bigl[F\,\bigl|\,\F_n\bigr]\,\Bigl|\,\F_L\Bigr]=\E\bigl[F\,\bigl|\,\F_L\bigr]\,,\quad L\subseteq[n]\,.
\end{equation*}
From now on, we will assume that, in fact, $F\in L^2_\X$. Then, the same holds for $F_n$ for each $n\in\N$ and, by \eqref{hdform}, also for its Hoeffding components $F_{n,M}$, $M\subseteq[n]$, which are in fact known to be pairwise orthogonal or uncorrelated in $L^2(\P)$ in this case. Another important consequence of \eqref{hdform} is that, for any finite subset $M$ of $\N$, the Hoeffding components $F_{n,M}$ are stationary for $n\geq \max(M)$, i.e. we have 
\begin{equation}\label{cons1}
F_{n,M}=F_{l,M}
\end{equation}
whenever $1\leq n<l$ and $M\subseteq[n]$. In particular, for each finite subset $M\subseteq\N$ and all $n\geq\max(M)$ we have 
\begin{equation}\label{cons3}
 F_{n,M}=F_{\max(M),M}\,.
\end{equation}
Henceforth, we may and will thus only write $F_M$ for $F_{\max(M),M}$.

For $p\in\N_0 ,n\in\N$ with $n\geq p$ define 
\begin{equation*}
F_n^{(p)}:=\sum_{\substack{M\subseteq[n]:\\ \abs{M}=p}} F_{M}\,.
\end{equation*}
Then, from \eqref{cons1} we conclude that 
\begin{equation}\label{cons2}
F_{n+1}^{(p)}=F_n^{(p)}+\sum_{\substack{M\subseteq[n+1]:\\ \abs{M}=p,\, n+1\in M}} F_{M}
\end{equation}
for all $p,n\in\N$ with $n\geq p$. Now it follows from (b) above that 
\[\E\bigl[F_{M}\,\bigl|\,\F_n\bigr]=\E\bigl[F_{n+1,M}\,\bigl|\,\F_n\bigr]=0\]
whenever $M\subseteq[n+1],\abs{M}=p$ and $n+1\in M$. Hence, from \eqref{cons2} we infer that, for fixed $p\in\N$, the sequence 
$(F_n^{(p)})_{n\geq p}$ is also a martingale with respect to $(\F_n)_{n\geq p}$. 
If $F\in L^2_\X$, then the martingales $(F_n)_{n\in\N}$ and $(F_n^{(p)})_{n\in\N}$ are in $L^2_\X$ as well and we have the bounds 
\begin{align}
\sup_{n\in\N}\E\bigl[F_n^2\bigr]&=\sup_{n\in\N}\E\Bigl[\Bigl(\E\bigl[F\,\bigl|\,\F_n\bigr]\Bigr)^2\Bigr]\leq \E\bigl[F^2\bigr]\quad\text{and}\label{l2bound1}\\
\sup_{p\in\N_0}\sup_{\substack{n\in\N:\\ n\geq p}}\Var\bigl(F_n^{(p)}\bigr)&\leq\sup_{n\in\N}\Var\bigl(F_n\bigr)=\sup_{n\in\N}\E\bigl[F_n^2\bigr]-\bigl(\E[F]\bigr)^2\leq \Var(F)\label{l2bound2}\,,
\end{align}
where we have used the orthogonality of the Hoeffding decomposition to obtain
\begin{equation*}
 \Var\bigl(F_n^{(p)}\bigr)=\sum_{\substack{M\subseteq[n]:\\ \abs{M}=p}}\Var\bigl(F_{M}\bigr)\leq \sum_{M\subseteq[n]}\Var\bigl(F_{M}\bigr)=\Var(F_n)\,.
\end{equation*}
From \eqref{l2bound2} and martingale theory we conclude that, for fixed $p\in\N_0$, $(F_n^{(p)})_{n\geq p}$ converges to some random variable $F^{(p)}\in L^2_\X$ both $\P$-a.s. and in $L^2(\P)$. 
Moreover, due to the convergence in $L^2(\P)$, for $p\not=q$ we have 
\begin{equation}\label{orthrel}
 \E\bigl[F^{(p)} F^{(q)}\bigr]=\lim_{n\to\infty}\E\bigl[F_n^{(p)} F_n^{(q)}\bigr]=0\,,
\end{equation}
where we have again used the orthogonality of the Hoeffding components for the second equality. This leads to the natural question if the identity 
\begin{equation}\label{Fsum}
 F=\sum_{p=0}^\infty F^{(p)}
\end{equation}
holds in the $L^2(\P)$-sense. In this case \eqref{Fsum} can be considered an \textit{infinite orthogonal Hoeffding decomposition} of $F$ in $L^2(\P)$. 
In fact, one has the following result, whose statement and an outline of a proof have been given in the introduction of \cite{kwapien}. A similar approach to this decomposition can be found in the proof of \cite[Lemma 2.4]{Duer}. Starting from the preparations above, it may be proved from standard facts about unconditional convergence of series with orthogonal summands in Hilbert spaces. Since this result is of crucial importance for the theory in this work, we provide a complete and purely probabilistic proof in Section \ref{appendix}.   

\begin{prop}[Infinite Hoeffding decomposition]\label{infhoeff}
Suppose that $F\in L^2_\X$. Then, using the above notation, the following orthogonal and unconditional series expansions hold in the $L^2(\P)$-sense:
\begin{align}
F^{(p)}&=\sum_{\substack{M\subseteq\N:\\ \abs{M}=p}}F_{M},\quad 
p\in\N_0\label{genhd1}\,,\\
F&=\sum_{M\in\Pot_{fin}(\N)}F_{M} 
=\sum_{p=0}^\infty F^{(p)}\label{genhd2}\,.
\end{align}
The representations of $F$ in \eqref{genhd2} are $\P$-a.s. unique and both are called the \textit{infinite Hoeffding decomposition} of $F$ in $L^2_\X$. 
\end{prop}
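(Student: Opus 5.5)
The plan is to exploit the fact that $\{F_M: M\in\Pot_{fin}(\N)\}$ is an orthogonal family in $L^2(\P)$, and to combine this with the $L^2$-martingale convergence $F_n\to F$. Orthogonality of $F_M$ and $F_K$ for finite $M\neq K$ follows from (b): choose $n\ge\max(M\cup K)$, so that both are Hoeffding components of $F_n$ by \eqref{cons3}. Moreover, for every $n$,
\[
\sum_{M\subseteq[n]}\E\bigl[F_M^2\bigr]=\E\bigl[F_n^2\bigr]\le\E\bigl[F^2\bigr]
\]
by \eqref{hdyn} and \eqref{l2bound1}. Hence $\sum_{M\in\Pot_{fin}(\N)}\E[F_M^2]\le\E[F^2]<\infty$, since every finite subfamily of $\Pot_{fin}(\N)$ is contained in $\Pot([n])$ for some $n$.

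The key elementary step, proved directly and without Hilbert space abstraction, is the following. If $(G_i)_{i\in I}$ is a countable orthogonal family with $\sum_i\E[G_i^2]<\infty$, then for any enumeration $i_1,i_2,\dots$ the partial sums $\sum_{k\le N}G_{i_k}$ are Cauchy in $L^2$, because $\E\bigl[(\sum_{k=N+1}^{N'}G_{i_k})^2\bigr]=\sum_{k=N+1}^{N'}\E[G_{i_k}^2]$. Their limit $S$ does not depend on the enumeration. To see this, compare two enumerations: for a fixed finite set $J$ containing the first $N$ terms of both, the difference of the partial sums has $L^2$-norm squared at most $2\sum_{i\notin J_0}\E[G_i^2]$, where $J_0$ is the set of common early indices, and this tends to $0$. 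This establishes unconditional convergence of $\sum_{M\in\Pot_{fin}(\N)}F_M$ to some $S\in L^2_\X$.

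To identify $S$ with $F$, I enumerate $\Pot_{fin}(\N)$ so that $\Pot([1]),\Pot([2])\setminus\Pot([1]),\dots$ come in successive blocks. Along this enumeration a subsequence of the partial sums equals $\sum_{M\subseteq[n]}F_M=F_n$ by \eqref{hdyn} and \eqref{cons3}. Since $F_n\to F$ in $L^2$, we get $S=F$, which is the first identity in \eqref{genhd2}. For \eqref{genhd1}, I apply the same lemma to the subfamily $\{F_M:\abs{M}=p\}$, which is unconditionally convergent to some $S_p$; along the block enumeration its partial sums contain the subsequence $F_n^{(p)}$. Since $F_n^{(p)}\to F^{(p)}$ in $L^2$, we get $S_p=F^{(p)}$. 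The second identity in \eqref{genhd2} then follows by grouping: enumerate $\Pot_{fin}(\N)$ so that the terms are arranged consistently with a partition into the classes $\{\abs{M}=p\}$. Concretely, I show $\E\bigl[(F-\sum_{p\le P}F^{(p)})^2\bigr]=\sum_{\abs{M}>P}\E[F_M^2]\to0$, using orthogonality of $F-\sum_{p\le P}F^{(p)}=\sum_{\abs{M}>P}F_M$ (an unconditional sum over a subfamily) and continuity of the inner product. Orthogonality of the $F^{(p)}$ is \eqref{orthrel}.

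For uniqueness, the interpretation is that the components are determined by $F$; indeed $F_M$ is given explicitly by \eqref{hdform}. I will show that any representation $F=\sum_M G_M$ with $G_M$ satisfying (a) and (c) forces $G_M=F_M$. To do this, condition on $\F_n$: the operator $\E[\,\cdot\,|\F_n]$ is $L^2$-continuous, it kills $G_M$ whenever $M\not\subseteq[n]$ by (b), and it fixes $G_M$ whenever $M\subseteq[n]$. This yields $F_n=\sum_{M\subseteq[n]}G_M$, and finite uniqueness then gives $G_M=F_M$. The main obstacle is the careful bookkeeping for unconditional convergence and the regrouping into the $F^{(p)}$. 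This is routine but must be done without quoting Hilbert space summability theorems, since the paper wants a probabilistic proof; the martingale $L^2$ convergence is used only to identify the limits.
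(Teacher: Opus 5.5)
Your proposal is correct and follows essentially the same route as the paper. Both arguments rest on an elementary lemma: an orthogonal family with summable second moments converges unconditionally in $L^2(\P)$ (the paper's Proposition \ref{serieslemma}). Both then identify the limits through the $L^2$-martingale convergences $F_n\to F$ and $F_n^{(p)}\to F^{(p)}$, with your block-enumeration subsequence trick and the direct tail computation $\sum_{\abs{M}>P}\E[F_M^2]\to0$ standing in for the paper's $\eps$-estimates. Your uniqueness argument, which conditions a representation $\sum_M G_M$ on $\F_n$ and then uses finite Hoeffding uniqueness, is a sound addition, since the paper asserts uniqueness but never proves it.
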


For $p\in\N_0$ the space $\mathcal{H}_p=\mathcal{H}_{p,\X}$ consisting of all random variables $F\in L^2_\X$ whose infinite Hoeffding decomposition is of the form
\[F=F^{(p)}=\sum_{M\subseteq\N: |M|=p}F_M\]
is called the \textit{$p$-th Hoeffding space} associated with $\X$. It is easy to see that each $\mathcal{H}_p$, $p\in\N_0$, is a closed linear subspace of $L^2_\X$.
Let us denote by $J_p$ the orthogonal projection on $\mathcal{H}_p$, $p\in\N_0$. Thus, if $F\in L^2_\X$ has the infinite Hoeffding decomposition \eqref{genhd2}, then 
\begin{equation}\label{intformHD}
 F=\sum_{p=0}^\infty J_p(F)=\E[F]+\sum_{p=1}^\infty J_p(F),
\end{equation}
where $J_p(F)=F^{(p)}$, $p\in\N_0$.
We will see in the next subsection that the infinite Hoeffding decomposition plays the same role in our setting as the Wiener-It\^{o} chaos decomposition does for the Malliavin calculus on Gaussian \cite{Nualart} and Poisson spaces \cite{Lastsa} or for Rademacher sequences \cite{Privault}. Therefore, for $p\in\N_0$, we also call an $F\in\mathcal{H}_p$ a \textit{$p$-th chaos}. 
If $p\geq1$ and for a finite vector $\X=(X_1,\dotsc,X_n)$ of independent random variables, in statistical theory such a random variable is also called a (not necessarily symmetric) \textit{completely degenerate $U$-statistic of order $p$}. We continue to use this notion in our setting of an infinite underlying sequence $\X$.

\subsection{Malliavin operators and chaotic decomposition}\label{mallop}

In this subsection we review the definitions and domains of the three Malliavin operators $D,\delta$ and $L$ given in \cite{DH}. Moreover, we extend the theory therein by showing that the infinite Hoeffding decomposition from Proposition \ref{infhoeff} plays the role of a chaotic decomposition for random variables $F\in L^2_\X$. In particular, we derive new and explicit characterizations of the domains for $D,\delta$ and $L$ in terms and by means of this decomposition. On the one hand these criteria facilitate the verification, whether the random variable under consideration belongs to the respective domain. On the other hand, several novel important structural properties of the operators $D$ and $L$ are proved via the infinite Hoeffding decomposition. We begin by reviewing the relevant definitions and theory from \cite{DH} before stating and proving our various extensions.\smallskip\\

For $k\in\N$ and $F\in L^1_\X$ with representative $f\in L^1(\mu)$ we define 
\begin{equation*}
D_kF:=F-\E[F\,|\,\G_k]=F-\E\bigl[f\bigl(\X^{(k)}\bigr)\,\bigl|\,\X\bigr]=\E\bigl[f(\X)-f(\X^{(k)})\,\bigl|\,\X\bigr], 
\end{equation*}
where we recall that $\G_k=\F_{\N\setminus\{k\}}=\sigma(X_j,j\not=k)$.
It is easy to see that each $D_k$ defines a self-adjoint operator with norm one on $L^2_\X$. Indeed, $D_k$ is the orthogonal projection on the space of all random variables in $L^2_\X$ that really depend on the random variable $X_k$. It thus follows that $D_kD_k=D_k$ and $D_kD_l=D_lD_k$ for all $k,l\in\N$. Furthermore, $F\in L^r_\X$ implies $D_kF\in L^r_\X$ and $\norm{D_kF}_r\leq 2\norm{F}_r$ for all $r\in[1,\infty)$. Also note that the symmetry relation $\E[FD_kG]=\E[GD_kF]$ continues to hold for $G\in L^r_\X$ and $F\in L^s_\X$, if $r,s\in[1,\infty]$ are such that $r^{-1}+s^{-1}=1$ (with the usual convention that $1/\infty:=0$).

Moreover, we let $\tilde{D}F:=(D_kF)_{k\in\N}$ denote the corresponding process. This definition makes sense for any $F\in L^2_\X$ (in fact even for any $F\in L^1_\X$) although $\tilde{D}F$ need not necessarily be an element of $L^2(\kappa\otimes\P_\F)$. Thus, in \cite{DH} the \textit{Malliavin derivative} $D$ is first defined on the $L^2_\X$-dense subspace $\mathcal{S}$ via $DF=\tilde{D}F$, which is always in $L^2(\kappa\otimes\P_\F)$ as $D_kF=0$ if $F$ is $\F_n$-measurable and $k>n$ (see \cite[Lemma 3.1]{DH}). Then, it is proved in \cite[Corollary 2.5]{DH} that this operator on $\mathcal{S}$ is closable and its closure is again denoted by $D$. We write $\dom(D)$ for the domain of this closure, which therefore defines a densely defined, closed operator $D:\dom(D)\subseteq L^2_\X\rightarrow L^2(\kappa\otimes\P_\F)$. In \cite[Lemma 2.6]{DH}, a sufficient condition for $F\in L^2_\X$ to be in $\dom(D)$ is given but no explicit characterization of $\dom(D)$ is provided. Moreover, it is not clear from the outset that, for $F\in\dom(D)\setminus\mathcal{S}$, this operator is still given by $DF=(D_kF)_{k\in\N}$, although we will see that this indeed holds in Lemma \ref{domDle} below. Since $\dom(D)$ is in general a strict subset of $L^2_\X$, it is thus desirable to have verifiable equivalent conditions for $F$ to be in $\dom(D)$. These are provided by Proposition \ref{domD} below. The following slight extension of \cite[Lemma 3.2]{DH} will be used in what follows without further mention: For $F\in L^2_\X$, $k\in\N$ and $M\subseteq \N$ one has
\begin{align*}
 D_k\E\bigl[F\,\big|\,\F_M\bigr]=\E\bigl[F\,\big|\,\F_M\bigr]-\E\bigl[F\,\big|\,\F_M\cap \G_k\bigr]=\E\bigl[D_kF\,\big|\,\F_M\bigr].
\end{align*}

\medskip

As usual, the \textit{divergence operator} $\delta$ has been defined in \cite{DH} as the adjoint operator of $D$. In particular, by definition, its domain $\dom(\delta)$ is the collection of all processes $U=(U_k)_{k\in\N}\in L^2(\kappa\otimes\P_\F)$ such that there is a constant $C\in[0,\infty)$ (depending on $U$) with the property that 
\[\Babs{\langle DF,U\rangle_{L^2(\kappa\otimes\P_\F)}}=\Bbabs{\sum_{k=1}^\infty \E\bigl[D_kF U_k\bigr]}\leq C\norm{F}_2\]
holds for any $F\in\dom(D)$. For $U=(U_k)_{k\in\N}\in\dom(\delta)$ and $F\in\dom(D)$ one thus has the \textit{Malliavin integration by parts formula}
\begin{equation}\label{intpartsM}
\langle DF,U\rangle_{L^2(\kappa\otimes\P_\F)}=\E\bigl[F\delta U\bigr]
\end{equation} 
which, as $\langle DF,U\rangle_{L^2(\kappa\otimes\P_\F)}=\sum_{k=1}^\infty \E[D_kF U_k]=\sum_{k=1}^\infty \E[F D_kU_k]$, entails that 
\begin{equation}\label{formdelta}
\delta U=\sum_{k=1}^\infty D_kU_k.
\end{equation}
Note that for the latter argument to be sound, one actually has to make sure that interchanging the infinite sum and the expectation is justified. We refer to Proposition \ref{domdelta} below for a rigorous proof of the formula \eqref{formdelta} as well as for new explicit characterizations of $\dom(\delta)$.
\smallskip\\

The third Malliavin operator defined in \cite{DH} is the \textit{Ornstein-Uhlenbeck generator} or \textit{number operator} $L$ corresponding to $\X$. By definition, its domain $\dom(L)$ is the collection of all $F\in\dom(D)$ such that $DF\in\dom(\delta)$ and then, for $F\in\dom(L)$, one defines 
\[LF=-\delta DF=-\sum_{k=1}^\infty D_kF=\sum_{k=1}^\infty \E\bigl[f(\X^{(k)})-f(\X)  \,\bigl|\,\X\bigr],\]
where the explicit formula \eqref{formdelta} for $\delta U$ as well as $D_kD_k=D_k$ have been used for the second equality. 
\smallskip\\

The remainder of this section is devoted to our extensions of the theory developed in \cite{DH} and in \cite{Duer}. Most of the results to follow are of independent interest but will be particularly useful for proving error bounds on distributional approximations in follow up works.

\begin{lemma}\label{domDle}
Suppose that $F_n,F\in L^2_\X$, $n\in\N$, are such that $(F_n)_{n\in\N}$ converges in $L^2_\X$ to $F$. Then, for any $k\in\N$, the sequence $(D_kF_n)_{n\in\N}$ converges to $D_kF$ in $L^2_\X$. Moreover, if $F\in\dom(D)$, then $DF=\tilde{D}F=(D_kF)_{k\in\N}$.
\end{lemma}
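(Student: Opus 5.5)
The first assertion follows from the fact that each $D_k$ is a bounded operator on $L^2_\X$. Recall that $D_kF=F-\E[F\,|\,\G_k]$ and that conditional expectation is an $L^2$-contraction; equivalently, $D_k$ is an orthogonal projection and so has norm at most one. Hence
\[\norm{D_kF_n-D_kF}_2=\norm{D_k(F_n-F)}_2\leq\norm{F_n-F}_2\longrightarrow0\]
as $n\to\infty$, for every fixed $k\in\N$.

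For the second assertion, let $F\in\dom(D)$. By the definition of $D$ as the closure of its restriction to $\mathcal{S}$, there is a sequence $(F_n)_{n\in\N}\subseteq\mathcal{S}$ with $F_n\to F$ in $L^2_\X$ and $DF_n\to DF$ in $L^2(\kappa\otimes\P_\F)$. On $\mathcal{S}$ we know that $DF_n=(D_kF_n)_{k\in\N}$. Write $DF=(U_k)_{k\in\N}$. Convergence in $L^2(\kappa\otimes\P_\F)$ means
\[\sum_{k=1}^\infty\E\bigl[(D_kF_n-U_k)^2\bigr]\to0,\]
which forces coordinatewise convergence: $D_kF_n\to U_k$ in $L^2_\X$ for each $k$. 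The first part gives $D_kF_n\to D_kF$ in $L^2_\X$. Uniqueness of $L^2$-limits then yields $U_k=D_kF$ $\P$-a.s. for every $k$. So $DF=(D_kF)_{k\in\N}=\tilde{D}F$ as elements of $L^2(\kappa\otimes\P_\F)$; identifying processes with sequences is harmless since $\kappa$ is counting measure and $\N$ is countable.

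There is no genuine obstacle here. The only point requiring care is to make explicit that membership in the domain of the closure provides an approximating sequence from $\mathcal{S}$ whose derivatives converge in $L^2(\kappa\otimes\P_\F)$, and that this norm controls each coordinate separately.
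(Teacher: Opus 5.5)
Your proposal is correct and follows essentially the same argument as the paper. The first claim uses the $L^2$-continuity of $D_k=\Id-\E[\,\cdot\,|\,\G_k]$. The second takes an approximating sequence from $\mathcal{S}$, bounds each coordinate by the full $L^2(\kappa\otimes\P_\F)$-norm, and identifies $V_k=D_kF$ by uniqueness of $L^2$-limits.
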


\begin{proof}
The first claim follows from $D_kF_n=F_n-\E[F_n|\G_k]$ and $D_kF=F-\E[F|\G_k]$ via well-known properties of conditional expectations. For the second claim let $DF=(V_k)_{k\in\N}\in L^2(\kappa\otimes\P_\F)$. If $F\in\dom(D)$, then there is a sequence $F_n\in\mathcal{S}$, $n\in\N$, such that $F_n\rightarrow F$ in $L^2(\P)$ and $DF_n\rightarrow DF$ in $L^2(\kappa\otimes\P_\F)$ as $n\to\infty$. Thus, for each fixed $k\in\N$, 
\[0\leq \E\Bigl[\bigl(D_kF_n-V_k\bigr)^2\Bigr]\leq\sum_{l=1}^\infty\E\Bigl[\bigl(D_lF_n-V_l\bigr)^2\Bigr]=\int_{\N\times\Omega} (DF_n-DF)^2d(\kappa\otimes\P_\F)\stackrel{n\to\infty}{\longrightarrow}0.\]
Since $(D_kF_n)_{n\in\N}$ converges also to $D_kF$ in $L^2_\X$ by the first claim, it follows that $V_k=D_kF$ $\P$-a.s. for each $k\in\N$. Since $\N$ is countable, this implies that 
$(V_k)_{k\in\N}=(D_kF)_{k\in\N}$ $\P$-a.s.
\end{proof}

\begin{prop}\label{HoeffD}
Let $F\in L^2_\X$ have the infinite Hoeffding decomposition \eqref{genhd2}. Then, for any $k\in\N$, the infinite Hoeffding decomposition of $D_kF$ is given by\\ 
$D_kF=\sum_{M\in\Pot_{fin}(\N): k\in M} F_M$.
\end{prop}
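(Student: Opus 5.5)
The plan is to compute $D_k$ on each Hoeffding component and then pass to the limit in $L^2$ using continuity of $D_k$. By Proposition \ref{infhoeff}, $F=\sum_{M\in\Pot_{fin}(\N)}F_M$ unconditionally in $L^2(\P)$. In particular the partial sums $S_n:=\sum_{M\subseteq[n]}F_M=F_n$ converge to $F$ in $L^2_\X$. The operator $D_k$ is a bounded linear operator on $L^2_\X$ (an orthogonal projection, and we may also invoke Lemma \ref{domDle}). Hence $D_kF=\lim_n D_kS_n=\lim_n\sum_{M\subseteq[n]}D_kF_M$ in $L^2$.

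\textbf{Step 1: $D_k$ on a single component.} I would show that $D_kF_M=F_M$ if $k\in M$, and $D_kF_M=0$ if $k\notin M$.
\begin{itemize}
\item If $k\notin M$, then $F_M$ is $\F_M$-measurable and $\F_M\subseteq\G_k$. So $\E[F_M\,|\,\G_k]=F_M$, and hence $D_kF_M=0$.
\item If $k\in M$, independence gives $\E[F_M\,|\,\G_k]=\E[F_M\,|\,\F_{M\setminus\{k\}}]$. This can be justified by the identity $\E[F_{n,M}|\F_K]=\E[F_{n,M}|\F_{K\cap M}]$ recorded before, extended to the infinite set $K=\N\setminus\{k\}$: take $n\ge\max M$ and note that $F_M$ is independent of $\sigma(X_j, j>n)$. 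By property (c), $\E[F_M\,|\,\F_{M\setminus\{k\}}]=0$, so $D_kF_M=F_M$.
\end{itemize}

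\textbf{Step 2: pass to the limit.} By Step 1, $D_kS_n=\sum_{M\subseteq[n],\,k\in M}F_M$. Since the family $(F_M)_{k\in M}$ is a subfamily of an orthogonal family with $\sum_M\E[F_M^2]=\E[F^2]-(\E F)^2+(\E F)^2<\infty$, the series $G:=\sum_{M\in\Pot_{fin}(\N):\,k\in M}F_M$ converges unconditionally in $L^2$. It is the limit of these partial sums. Therefore $D_kF=G$.

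\textbf{Step 3: identify the decomposition.} It remains to check that this expansion is indeed the infinite Hoeffding decomposition of $D_kF$, that is, $(D_kF)_M=F_M$ for $k\in M$ and $0$ otherwise. I would use formula \eqref{hdform} together with the commutation rule $D_k\E[F|\F_L]=\E[D_kF|\F_L]$ stated earlier. This gives $(D_kF)_M=\sum_{L\subseteq M}(-1)^{|M|-|L|}D_k\E[F|\F_L]=D_kF_M$, which by Step 1 is the claimed component. Uniqueness in Proposition \ref{infhoeff} then completes the argument. This route is in fact more direct and makes Step 2 only a consistency check.

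\textbf{Main obstacle.} The only delicate point is justifying the conditional-expectation reduction with respect to the infinite $\sigma$-field $\G_k$ in Step 1. The approach is to write $\G_k=\F_{[n]\setminus\{k\}}\vee\sigma(X_j,j>n)$ and use independence of the two blocks.
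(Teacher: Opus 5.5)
Your proof is correct, and Steps 1--2 are essentially the paper's proof. The paper notes $\E[F_M\,|\,\G_k]=\1_{\{k\notin M\}}F_M$ for every $M\in\Pot_{fin}(\N)$ and then interchanges $\E[\,\cdot\,|\,\G_k]$ with the $L^2$-convergent series $F=\sum_M F_M$; this is exactly your use of the $L^2$-continuity of $D_k$ along the partial sums $F_n$.

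Where you genuinely differ is Step 3. The paper stops once it has the $L^2$-expansion $D_kF=\sum_{M\ni k}F_M$. It then tacitly relies on the uniqueness clause of Proposition \ref{infhoeff}, i.e.\ that an $L^2$-convergent expansion whose summands satisfy (a) and (b) must be \emph{the} Hoeffding decomposition. The paper states this clause without a separate proof.

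You instead compute the components of $D_kF$ directly. By \eqref{hdform}, the commutation rule $\E[D_kF\,|\,\F_L]=D_k\E[F\,|\,\F_L]$ and linearity, you get $(D_kF)_M=D_kF_M=\1_{\{k\in M\}}F_M$. Proposition \ref{infhoeff} applied to $D_kF$ then gives the expansion. This uses only finite sums and no limit interchange. It also needs no uniqueness argument beyond the definition of the components, so your phrase ``uniqueness \ldots\ completes the argument'' claims more than you actually need. The paper's route is a one-liner but leaves the identification implicit.

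One small precision in Step 1: independence of $F_M$ from $\sigma(X_j,j>n)$ is not quite enough. You need $\sigma(X_j,j>n)$ to be independent of all of $\F_{[n]}\supseteq\sigma(F_M)\vee\F_{[n]\setminus\{k\}}$. Your ``two blocks'' remark covers this.
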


\begin{proof}
Note that, by the defining properties of the Hoeffding components $F_M$, we have $\E[F_M|\G_k]=\1_{\{k\notin M\}} F_M$ $\P$-a.s. for any $k\in\N$ and any $M\in\Pot_{fin}(\N)$. Hence, using that limits in $L^2(\P)$ and conditional expectation may be interchanged, we obtain
\begin{align*}
D_kF&=F-\E[F_M|\G_k]=\sum_{M\in\Pot_{fin}(\N)}F_M-\sum_{M\in\Pot_{fin}(\N)}\E\bigl[F_M\,\bigl|\,\G_k\bigr]\\
&=\sum_{M\in\Pot_{fin}(\N)}F_M-\sum_{M\in\Pot_{fin}(\N)}\1_{\{k\notin M\}}F_M
=\sum_{M\in\Pot_{fin}(\N): k\in M} F_M.
\end{align*}
\end{proof}

The following new formula is analogous to the celebrated Stroock formula on Gaussian spaces and similar versions in Poisson and Rademacher seetings. 

\begin{cor}[Stroock type formula]\label{stroock}
 Let $F\in L^2_\X$ have the infinite Hoeffding decomposition \eqref{genhd2}. Then, for any 
 $M=\{i_1,\dotsc,i_p\}\in\Pot_{fin}(\N)$ it holds that 
 $F_M=\E\bigl[D_{i_1}D_{i_2}\ldots D_{i_p}F\,\bigl|\, \F_M\bigr]$. Thus, one has the explicit formulae
\begin{align*}
F&=\E[F]+\sum_{p=1}^\infty\sum_{1\leq i_1<i_2<\ldots<i_p<\infty}\E\bigl[D_{i_1}D_{i_2}\ldots D_{i_p}F\,\bigl|\, \F_{\{i_1,\dotsc,i_p\}}\bigr]\\
&=\E[F]+\sum_{p=1}^\infty\frac{1}{p!}\sum_{(i_1,\dotsc,i_p)\in\N^p_{\not=}}\E\bigl[D_{i_1}D_{i_2}\ldots D_{i_p}F\,\bigl|\, \F_{\{i_1,\dotsc,i_p\}}\bigr].
\end{align*}
 \end{cor}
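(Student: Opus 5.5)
We obtain the formula by iterating Proposition \ref{HoeffD} and then projecting onto $\F_M$. Fix $M=\{i_1,\dotsc,i_p\}\in\Pot_{fin}(\N)$.

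\emph{Iterated derivative.} Since each $D_k$ maps $L^2_\X$ into $L^2_\X$, Proposition \ref{HoeffD} applies to $D_{i_p}F$, then to $D_{i_{p-1}}D_{i_p}F$, and so on. We argue by induction on the number of applied operators. Suppose the infinite Hoeffding decomposition of $D_{i_{j+1}}\cdots D_{i_p}F$ is $\sum_{K\in\Pot_{fin}(\N):\,\{i_{j+1},\dotsc,i_p\}\subseteq K}F_K$. By the uniqueness part of Proposition \ref{infhoeff}, its Hoeffding components are $F_K$ when $K$ contains $\{i_{j+1},\dotsc,i_p\}$, and $0$ otherwise. Applying Proposition \ref{HoeffD} once more gives
\[
D_{i_1}D_{i_2}\cdots D_{i_p}F=\sum_{K\in\Pot_{fin}(\N):\, M\subseteq K}F_K,
\]
with the series converging unconditionally in $L^2(\P)$. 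Because the $D_k$ commute, the order of the $i_j$ does not matter.

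\emph{Projection onto $\F_M$.} Conditional expectation given $\F_M$ is a bounded operator on $L^2$, so it may be applied term by term to the $L^2$-convergent series, exactly as in the proof of Proposition \ref{HoeffD}. For $K\supseteq M$, the independence identity gives $\E[F_K\,|\,\F_M]=\E[F_K\,|\,\F_{K\cap M}]=\E[F_K\,|\,\F_M]$. This equals $F_K=F_M$ if $K=M$. If $K\supsetneq M$, it vanishes by property (c) applied with $n=\max(K)$; by \eqref{cons3}, $F_K=F_{\max(K),K}$, so (c) is indeed available. Hence $\E[D_{i_1}\cdots D_{i_p}F\,|\,\F_M]=F_M$.

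\emph{The two series.} Substituting into \eqref{genhd2}, and using that the $M=\emptyset$ term is $F_\emptyset=\E[F]$, gives the first formula, with the inner sums read as the unconditionally convergent series \eqref{genhd1}; the outer sum over $p$ converges by \eqref{genhd2}. For the second formula, the summand $\E[D_{i_1}\cdots D_{i_p}F\,|\,\F_{\{i_1,\dotsc,i_p\}}]=F_{\{i_1,\dotsc,i_p\}}$ depends only on the set $\{i_1,\dotsc,i_p\}$. Each $p$-set corresponds to exactly $p!$ tuples in $\N^p_{\not=}$, so the tuple sum equals $p!\,F^{(p)}$. Unconditional convergence in \eqref{genhd1} lets us regroup and rearrange the sum over tuples freely.

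The main point needing care is the interchange of conditional expectation and operators with infinite $L^2$ series, together with the identification of components via uniqueness. Both are handled by the $L^2$-continuity of conditional expectations and by Proposition \ref{infhoeff}, so no serious obstacle is expected.
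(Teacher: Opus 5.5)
Your proposal is correct and follows the paper's proof. You iterate Proposition \ref{HoeffD} to get $D_{i_1}\cdots D_{i_p}F=\sum_{N\supseteq M}F_N$, pass the conditional expectation given $\F_M$ through the $L^2$-convergent series, use degeneracy to kill the terms with $N\supsetneq M$, and read off both formulae from \eqref{genhd2} and the commutativity of the $D_k$. Your extra details (identifying the Hoeffding components of the intermediate derivatives via uniqueness, and the $p!$-to-one correspondence between tuples and sets) spell out what the paper's ``by iteration'' and ``$D_kD_l=D_lD_k$'' leave implicit.
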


 \begin{proof}
  By iteration, from Proposition \ref{HoeffD} we obtain that 
  \begin{align*}
  D_{i_1}D_{i_2}\ldots D_{i_p}F&= \sum_{N\in\Pot_{fin}(\N): M\subseteq N} F_N.
  \end{align*}
  Since the series here converges (unconditionally) in $L^2(\P)$, we can interchange summation and the conditional expectation with respect to $\F_M$, and then property (b) of the Hoeffding components yields the first claim. The formulae for $F$ now follow from the first claim and \eqref{genhd2} by observing that $D_kD_l=D_lD_k$ for all $k,l\in\N$. 
 \end{proof}

The next result gives novel, verifiable criteria for $F\in L^2_\X$ to be in $\dom(D)$. In particular, (ii) and (iv) will be useful in practical applications.

\begin{prop}\label{domD}
For a random variable $F\in L^2_\X$ the following conditions are equivalent:
\begin{enumerate}[{\normalfont (i)}]
\item $F\in \dom(D)$.
\item $(D_kF)_{k\in\N}\in L^2(\kappa\otimes\P_\F)$, i.e. $\sum_{k=1}^\infty \E[(D_kF)^2]<\infty$.
\item For one (and then every) representative $f\in L^2(\mu)$ of $F$ one has \\$\displaystyle\sum_{k=1}^\infty\E\Bigl[\Bigl(f\bigl(\X^{(k)}\bigr)-f\bigl(\X\bigr)\Bigr)^2 \Bigr]<\infty$.
\item With the infinite Hoeffding decomposition \eqref{genhd2} of $F$
one has that \\
$\displaystyle\sum_{p=1}^\infty p\,\E\bigl[(F^{(p)})^2\bigr]=\sum_{p=1}^\infty p\,\E\bigl[J_p(F)^2\bigr]=  \sum_{M\in \Pot_{fin}(\N)}|M|\,\E[F_{M}^2]<\infty$.
\end{enumerate}
\end{prop}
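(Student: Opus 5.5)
I will prove (ii)$\Leftrightarrow$(iii)$\Leftrightarrow$(iv) by direct computation. The equivalence (i)$\Leftrightarrow$(ii) is the functional-analytic heart of the proposition. For it I will use Lemma \ref{domDle} and approximation by truncated Hoeffding sums.

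\emph{(ii)$\Leftrightarrow$(iv).} By Proposition \ref{HoeffD}, $D_kF=\sum_{M\ni k}F_M$, with orthogonal summands. Hence $\E[(D_kF)^2]=\sum_{M\in\Pot_{fin}(\N):k\in M}\E[F_M^2]$. Summing over $k$ and using Tonelli for nonnegative terms gives
\[
\sum_{k=1}^\infty\E[(D_kF)^2]=\sum_{M}|M|\,\E[F_M^2]=\sum_{p\ge1}p\,\E[(F^{(p)})^2],
\]
where the last equality uses $\E[(F^{(p)})^2]=\sum_{|M|=p}\E[F_M^2]$, valid by orthogonality in \eqref{genhd1}. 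So the two series are literally equal.

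\emph{(ii)$\Leftrightarrow$(iii).} I will compute $\E[(f(\X)-f(\X^{(k)}))^2]=2\E[(D_kF)^2]$. Write $f(\X)-f(\X^{(k)})=D_kF-D_kF'$, where $F'=f(\X^{(k)})$ and $D_kF'$ is its analogue with respect to $\X^{(k)}$. The terms $\E[f(\X)\mid\G_k]$ and the corresponding one for $\X^{(k)}$ coincide, since both are a function of $(X_j)_{j\ne k}$. The cross term $\E[D_kF\,D_kF']$ vanishes: condition on $\G_k$ and use that $X_k$ and $X_k'$ are conditionally independent given $\G_k$, with each factor having conditional mean zero. Moreover $\E[(D_kF')^2]=\E[(D_kF)^2]$ by exchangeability. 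Hence the series in (iii) is twice the series in (ii). This also shows that the condition does not depend on the choice of representative.

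\emph{(i)$\Rightarrow$(ii).} This is immediate: by Lemma \ref{domDle}, $DF=(D_kF)_k$ and $DF\in L^2(\kappa\otimes\P_\F)$.

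\emph{(ii)$\Rightarrow$(i).} This is the main step. Assuming (iv), I will approximate $F$ by cylindrical variables
\[
G_n:=\E[F\mid\F_n]=\sum_{M\subseteq[n]}F_M\in\mathcal{S}.
\]
Then $G_n\to F$ in $L^2$ by Proposition \ref{infhoeff}. By Proposition \ref{HoeffD} applied to $G_n$, we have $D_kG_n=\sum_{M\subseteq[n],\,k\in M}F_M$. It follows that
\[
\sum_k\E[(D_kF-D_kG_n)^2]=\sum_{M\not\subseteq[n]}|M|\,\E[F_M^2].
\]
This is a tail of the convergent series in (iv), so it tends to $0$ by dominated convergence over the countable index set $\Pot_{fin}(\N)$. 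Thus $DG_n\to(D_kF)_k$ in $L^2(\kappa\otimes\P_\F)$. Since $D$ is closed, this gives $F\in\dom(D)$.

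The only delicate points are interchanging sums freely and identifying $D_kG_n$ via the Hoeffding decomposition of $G_n$. Both are justified by unconditional orthogonal convergence and the uniqueness in Proposition \ref{infhoeff}.
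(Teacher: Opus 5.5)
Your proof is correct and follows essentially the same route as the paper. You get (ii)$\Leftrightarrow$(iv) from Proposition \ref{HoeffD} and orthogonality, and (iv)$\Rightarrow$(i) from the cylindrical approximants $\E[F\,|\,\F_n]$ together with closedness of $D$. The differences are minor: for (i)$\Rightarrow$(ii) you use the second part of Lemma \ref{domDle} directly, where the paper argues by contradiction from its first part; you obtain $\E[(f(\X^{(k)})-f(\X))^2]=2\E[(D_kF)^2]$ through a vanishing cross term rather than by expanding squares; and you identify the limit $(D_kF)_k$ of $DG_n$ explicitly instead of only proving the Cauchy property.
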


\begin{proof}
(i)$\Rightarrow$(ii): Suppose that (i) holds but (ii) does not. Then, by (i) there is a sequence $F_n\in\mathcal{S}$, $n\in\N$, such that $F_n\to F$ in $L^2(\P)$ and $DF_n=\tilde{D}F_n\to DF$ in $L^2(\kappa\otimes\P_\F)$ as $n\to\infty$. In particular, there is a $C\in(0,\infty)$ such that $\norm{DF_n}_{L^2(\kappa\otimes\P_\F)}^2= \sum_{k=1}^\infty \E[(D_kF_n)^2]\leq C$ for each $n\in\N$. On the other hand, since (ii) does not hold, there is an $m\in\N$ such that $\sum_{k=1}^m \E[(D_kF)^2]>2C$. By Lemma \ref{domDle}, from $F_n\to F$ in $L^2(\P)$ it follows that $D_kF_n\to D_kF$ in $L^2(\P)$ as $n\to\infty$ for each $k\in\N$. Therefore, 
\[\lim_{n\to\infty}\sum_{k=1}^m \E[(D_kF_n)^2]=\sum_{k=1}^m \E[(D_kF)^2]>2C\]
contradicting 
\[\sum_{k=1}^m \E[(D_kF_n)^2]\leq \sum_{k=1}^\infty \E[(D_kF_n)^2]\leq C\]
for each $n\in\N$. Hence, (i) implies (ii).\smallskip\\
(ii)$\Leftrightarrow$(iv): From Proposition \ref{HoeffD} we know that $D_kF=\sum_{M\in\Pot_{fin}(\N):k\in M}F_M$ for each $k\in\N$. Thus, from the orthogonality of the Hoeffding components we have
\begin{align*}
\sum_{k=1}^\infty \E[(D_kF)^2]&=\sum_{k=1}^\infty \Bigl(\sum_{M\in\Pot_{fin}(\N):k\in M}\E\bigl[F_M^2\bigr]\Bigr)=\sum_{M\in\Pot_{fin}(\N)}\E\bigl[F_M^2\bigr]\sum_{k\in M}1\\
&=\sum_{M\in\Pot_{fin}(\N)}|M|\,\E\bigl[F_M^2\bigr]=\sum_{p=1}^\infty p\,\E\bigl[(F^{(p)})^2\bigr],
\end{align*}
where the last equality follows from \eqref{genhd1} and the (general) rearrangement theorem for real series with nonnegative summands.\smallskip\\
(ii)$\Leftrightarrow$(iii): On the one hand, for $k\in\N$, we have 
\begin{align*}
\E\bigl[(D_kF)^2\bigr]&=\E\Bigl[F^2+\bigl(\E\bigl[F\,\big|\,\G_k\bigr]\bigr)^2-2F\bigl(\E\bigl[F\,\big|\,\G_k\bigr]\bigr)\Bigr]
=\E[F^2]-\E\Bigl[\bigl(\E\bigl[F\,\big|\,\G_k\bigr]\bigr)^2\Bigr]
\end{align*}
and, on the other hand, recalling that $\X$ and $\X^{(k)}$ have the same distribution we have 
\begin{align}\label{eqdomd}
&\E\Bigl[\Bigl(f\bigl(\X^{(k)}\bigr)-f\bigl(\X\bigr)\Bigr)^2 \Bigr]=\E\Bigl[f\bigl(\X^{(k)}\bigr)^2+f\bigl(\X\bigr)^2-2f\bigl(\X^{(k)}\bigr)f\bigl(\X\bigr)\Bigr]\notag\\
&=2\E[F^2]-2\E\Bigl[f\bigl(\X\bigr)\E\bigl[f\bigl(\X^{(k)}\bigr)\,\big|\,\X\bigr]\Bigr]=2\E[F^2]-2\E\Bigl[F\,\E\bigl[F\,\big|\,\G_k\bigr]\Bigr]\notag\\
&=2\E[F^2]-2\E\Bigl[\bigl(\E\bigl[F\,\big|\,\G_k\bigr]\bigr)^2\Bigr]=2\E\bigl[(D_kF)^2\bigr],
\end{align}
proving the claim.\smallskip\\
(iv)$\Rightarrow$(i): For $n\in\N$ let $F_n:=\E[F|\F_n]\in\mathcal{S}$. Then, we know that $F_n\to F$ in $L^2(\P)$ as $n\to\infty$ and by the definition of the closure of an operator it suffices to show that $(DF_n)_{n\in\N}$ converges in $L^2(\kappa\otimes\P_\F)$, as then $F\in \dom(D)$ and $DF=\lim_{n\to\infty} DF_n$ in $L^2(\kappa\otimes\P_\F)$. But, for integers $n>m\geq1$ we have 
\begin{align*}
&\sum_{k=1}^\infty\E\Bigl[\bigl(D_kF_n-D_kF_m\bigr)^2\Bigr]=\sum_{k=1}^\infty\sum_{\substack{M\subseteq[n]:\\k\in M, M\cap[m+1,n]\not=\emptyset}}\E\bigl[F_M^2\bigr]
=\sum_{\substack{M\subseteq[n]:\\k\in M, M\cap[m+1,n]\not=\emptyset}}|M|\E\bigl[F_M^2\bigr]\notag\\
&=\sum_{\substack{M\subseteq[n]:\\k\in M}}|M|\E\bigl[F_M^2\bigr]-\sum_{\substack{M\subseteq[m]:\\k\in M}}|M|\E\bigl[F_M^2\bigr]\longrightarrow0,
\end{align*}
as $n,m\to\infty$ by (iii). Hence, $(DF_n)_{n\in\N}$ is a Cauchy sequence in the Hilbert space $L^2(\kappa\otimes\P_\F)$ and therefore convergent.
\end{proof}

The following proposition similarly characterizes the domain of the adjoint $\delta$ of $D$.

\begin{prop}\label{domdelta}
For a process $U=(U_k)_{k\in\N}\in L^2(\kappa\otimes\P_\F)$ the following statements are equivalent: 
\begin{enumerate}[{\normalfont (i)}]
  \item $U\in\dom(\delta)$.
	\item If $\sum_{M\in\Pot_{fin}(\N)}U_{k,M}$ is the infinite Hoeffding decomposition of $U_k\in L^2_\X$, $k\in\N$, then $\displaystyle\sum_{M\in\Pot_{fin}(\N)}\E\Bigl[\Bigl(\sum_{k\in M}U_{k,M}\Bigr)^2\Bigr]<\infty$.
  \item The series $\sum_{k=1}^\infty D_kU_k$ converges in $L^2_\X$.
	\item $\displaystyle\sup_{m\in\N}\E\biggl[\Bigl(\sum_{k=1}^m D_kU_k\Bigr)^2\biggr]<\infty$.
  \end{enumerate}
	In this case, one has $\delta U=\sum_{k=1}^\infty D_kU_k$ in $L^2(\P)$.
\end{prop}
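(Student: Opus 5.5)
Everything will be computed through the Hoeffding components. For $U\in L^2(\kappa\otimes\P_\F)$ write $U_k=\sum_M U_{k,M}$. By Proposition \ref{HoeffD}, $D_kU_k=\sum_{M\ni k}U_{k,M}$. Hence for every $m\in\N$ the partial sum $S_m:=\sum_{k=1}^m D_kU_k$ has infinite Hoeffding decomposition
\[
S_m=\sum_{M\in\Pot_{fin}(\N)} G_{m,M},\qquad G_{m,M}:=\sum_{k\in M\cap[m]}U_{k,M}.
\]
Here $G_{m,M}$ is a finite sum of $\F_M$-measurable variables satisfying (c), so it is the $M$-component of $S_m$. By orthogonality, $\E[S_m^2]=\sum_M \E[G_{m,M}^2]$. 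Set $G_M:=\sum_{k\in M}U_{k,M}$; then $G_{m,M}=G_M$ as soon as $m\ge\max(M)$.

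\emph{(ii)$\Rightarrow$(iii).} Define $G:=\sum_M G_M$. This series converges unconditionally in $L^2$ by (ii) and orthogonality. Then
\[
\E[(S_m-G)^2]=\sum_{M:\max M>m}\E\bigl[(G_{m,M}-G_M)^2\bigr]=\sum_{M:\max M>m}\E\Bigl[\Bigl(\sum_{k\in M,\,k>m}U_{k,M}\Bigr)^2\Bigr].
\]
To show this tends to $0$, I bound $\E[(\sum_{k\in M,k>m}U_{k,M})^2]\le 2\E[G_M^2]+2\E[(\sum_{k\in M,k\le m}U_{k,M})^2]$. The first part is a tail of the convergent series in (ii). The second part is the main obstacle: Cauchy--Schwarz only gives $|M\cap[m]|\sum_{k\le m}\E[U_{k,M}^2]$, which is not obviously summable. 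I would circumvent this by working directly with $S_n-S_m=\sum_{k=m+1}^n D_kU_k$, whose $M$-component is $\sum_{k\in M\cap(m,n]}U_{k,M}$. Alternatively, I would first prove (ii)$\Rightarrow$(i) by duality and then deduce (iii) via (i)$\Rightarrow$(iii) below. The duality route is cleaner. For $F\in\dom(D)$, Proposition \ref{domD} and Proposition \ref{HoeffD} give
\[
\sum_k\E[D_kF\,U_k]=\sum_k\sum_{M\ni k}\E[F_MU_{k,M}]=\sum_M\E[F_MG_M],
\]
where the rearrangement is justified by absolute convergence: $\sum_M\sum_{k\in M}|\E[F_MU_{k,M}]|\le(\sum_M|M|\E F_M^2)^{1/2}(\sum_k\E U_k^2)^{1/2}$ by Cauchy--Schwarz. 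Hence $|\langle DF,U\rangle|\le\norm{F}_2(\sum_M\E G_M^2)^{1/2}$, which proves (i) with $\delta U=G$.

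\emph{(i)$\Rightarrow$(ii).} Take $F$ with finitely many components $F_M:=G_M$ for $M$ in a finite family $\mathcal M$. Such an $F$ is cylindrical, so $F\in\mathcal S\subseteq\dom(D)$. The identity above gives $\sum_{M\in\mathcal M}\E G_M^2=\langle DF,U\rangle\le C(\sum_{M\in\mathcal M}\E G_M^2)^{1/2}$. Thus $\sum_{M\in\mathcal M}\E G_M^2\le C^2$, and taking the supremum over finite $\mathcal M$ gives (ii). The same identity shows $\E[F\delta U]=\E[FG]$ on the dense set $\mathcal S$, so $\delta U=G$.

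\emph{Remaining implications.} (iii)$\Rightarrow$(iv) is trivial. For (iv)$\Rightarrow$(ii), note that $\E S_m^2=\sum_M\E G_{m,M}^2\ge\sum_{M\subseteq[m]}\E G_M^2$. Letting $m\to\infty$ under the uniform bound gives (ii) by monotone convergence. For (ii)$\Rightarrow$(iii), I use that $S_m$ is the orthogonal projection-type truncation: $S_m=\sum_k\1_{\{k\le m\}}D_kU_k$. Since $U^{(m)}:=(U_k\1_{\{k\le m\}})_k$ trivially satisfies (ii), it lies in $\dom(\delta)$ with $\delta U^{(m)}=S_m$. Then $\E[F(S_m-G)]=-\sum_{k>m}\E[D_kF\,U_k]\to0$ for $F\in\dom(D)$, so $S_m\to G$ weakly in $L^2$. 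To upgrade to norm convergence, I would show $\E S_m^2\to\E G^2$. Fix $M$ and note $G_{m,M}\to G_M$; the difficulty is domination, and this is the step I expect to need the most care. If it resists, I would instead use weak convergence together with $\limsup\E S_m^2\le\E G^2$, obtained by applying the dual bound to $U^{(m)}$ restricted to test functions.
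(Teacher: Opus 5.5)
Your arguments for (i)$\Leftrightarrow$(ii), (iii)$\Rightarrow$(iv) and (iv)$\Rightarrow$(ii) are correct. Your direct duality proof of (ii)$\Rightarrow$(i) is sound: the rearrangement is justified by $\sum_M\sum_{k\in M}\abs{\E[F_MU_{k,M}]}\le(\sum_M\abs{M}\E[F_M^2])^{1/2}(\sum_k\E[U_k^2])^{1/2}$, and the argument identifies $\delta U=\sum_M G_M$ for every $U\in\dom(\delta)$. This differs from the paper, which obtains (ii)$\Rightarrow$(i) only via (ii)$\Rightarrow$(iii)$\Rightarrow$(i); its (i)$\Rightarrow$(ii) uses the same finite test functions as yours.

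The gap is (ii)$\Rightarrow$(iii), and with it the formula $\delta U=\sum_kD_kU_k$. None of your fallbacks closes it:
\begin{enumerate}
\item[(a)] Working with $S_n-S_m$ has the same defect, since its $M$-component $\sum_{k\in M\cap(m,n]}U_{k,M}$ is not controlled by $G_M$.
\item[(b)] There is no ``(i)$\Rightarrow$(iii) below''.
\item[(c)] Convergence of $\E[F(S_m-G)]$ for $F$ in the dense set $\dom(D)$ gives weak convergence only if $\sup_m\E[S_m^2]<\infty$. That is (iv), which does not follow from (ii).
\item[(d)] $\limsup_m\E[S_m^2]\le\E[G^2]$ is false in general.
\end{enumerate}

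In fact the gap cannot be closed, because (ii)$\Rightarrow$(iii) is false. The paper's own proof fails at exactly the point you flagged: $\E[(\sum_{k\in M\cap[m+1,n]}U_{k,M})^2]$ is not dominated by $\E[G_M^2]$, so its appeal to dominated convergence is unjustified.

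Here is a counterexample. Take a symmetric Rademacher sequence and consecutive disjoint blocks $B_1,B_2,\dots$ covering $\N$ with $\abs{B_j}=2r_j$. For $k\in B_j$ put $U_k:=\pm a_j\prod_{i\in B_j}X_i$, with sign $+$ on the first $r_j$ elements of $B_j$ and $-$ on the last $r_j$. Then $\sum_k\E[U_k^2]=\sum_j2r_ja_j^2$ and every $G_M$ vanishes. So (ii) holds, and by your identity $\langle DF,U\rangle=\sum_M\E[F_MG_M]$ we get $U\in\dom(\delta)$ with $\delta U=0$. But $D_kU_k=U_k$, so $\E[S_m^2]=r_j^2a_j^2$ when $m$ is the $r_j$-th element of $B_j$, while $S_m=0$ when $m=\max B_j$.
\begin{enumerate}
\item[(a)] With $r_j=2^j$, $a_j=2^{-j}$, conditions (i), (ii) and (iv) hold but (iii) fails.
\item[(b)] With $r_j=4^j$, $a_j^2=8^{-j}$, even (iv) fails.
\end{enumerate}

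So what can actually be proved is (iii)$\Rightarrow$(iv)$\Rightarrow$(ii)$\Leftrightarrow$(i). For every $U\in\dom(\delta)$ one has $\delta U=\sum_M\sum_{k\in M}U_{k,M}=\lim_m\E[S_m\,|\,\F_m]$ in $L^2$, and $\delta U=\sum_kD_kU_k$ whenever that series converges. This is exactly what your argument establishes. The ``pathwise'' formula asserted in the remark following the proposition is likewise too strong.
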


\begin{remark}\label{deltarem}
It is remarkable that, in the present product space setting, there is a \textit{pathwise} formula $\delta U=\sum_{k=1}^\infty D_kU_k$ holding for any process $U=(U_k)_{k\in\N}\in\dom(\delta)$. This is contrary to the Poisson setting, where such a formula has only been proved under additional integrability assumptions (see e.g. \cite[Theorem 6]{Lastsa}). Moreover, an analogous characterization of $\dom(\delta)$ to (iii) or (iv) is not known on Poisson spaces.  
\end{remark}

\begin{proof}[Proof of Proposition \ref{domdelta}]
(i)$\Rightarrow$(ii): Suppose that (ii) does not hold. Then, for any $n\in\N$, there are finitely many pairwise distinct sets $M_1,\dotsc,M_{m_n}\in\Pot_{fin}(\N)$ such that 
\[\sum_{j=1}^{m_n}\E\Bigl[\Bigl(\sum_{k\in M_j}U_{k,M_j}\Bigr)^2\Bigr]>n,\quad n\in\N.\]
For $n\in\N$, define $F_n:=\sum_{j=1}^{m_n}\sum_{l\in M_j} U_{l,M_j}$ which is certainly in $\dom(D)$ by Proposition \ref{domD}. Moreover, by the definition of $\delta$, orthogonality, the symmetry of the $D_k$, $k\in\N$, and by Proposition \ref{HoeffD}, for any $n\in\N$,
\begin{align*}
\Babs{\E\bigl[F_n \delta U\bigr]}&=\Babs{\sum_{k=1}^\infty \E\bigl[D_kF_n U_k\bigr]}=\Babs{\sum_{k=1}^\infty \E\bigl[F_n D_k U_k\bigr]}\\
&=\Babs{\sum_{k=1}^\infty \sum_{j=1}^{m_n}\sum_{l\in M_j}\1_{\{k\in M_j\}}\E\bigl[U_{k,M_j}U_{l,M_j}\bigr]}=\sum_{j=1}^{m_n}\E\Bigl[\Bigl(\sum_{k\in M_j}U_{k,M_j}\Bigr)^2\Bigr]\\
&=\norm{F_n}_2^2>\sqrt{n}\norm{F_n}.
\end{align*}
Hence, $U$ cannot be in $\dom(\delta)$ and thus (i) does not hold, either.\smallskip\\
(ii)$\Rightarrow$(iii): Since $L^2_\X$ is a Hilbert space, it suffices to show that the series $\sum_{k=1}^\infty D_kU_k$ is Cauchy in $L^2_\X$. Using Proposition \ref{HoeffD} and orthogonality, for integers $n>m\geq1$ we have 
\begin{align}\label{domdel1}
\E\biggl[\Bigl(\sum_{k=m+1}^n D_kU_k\Bigr)^2\biggr]&=\sum_{k,l=m+1}^n \E\bigl[D_kU_k D_lU_l\bigr]=\sum_{k,l=m+1}^n\sum_{M\in\Pot_{fin}(\N):k,l\in M}\E\bigl[U_{k,M}U_{l,M}\bigr]\notag\\
&=\sum_{M\in\Pot_{fin}(\N)}\E\Bigl[\Bigl(\sum_{k\in M\cap[m+1,n]}U_{k,M}\Bigr)^2\Bigr]\stackrel{m,n\to\infty}{\longrightarrow}0
\end{align} 
by (ii) and the dominated convergence theorem.\smallskip\\
(iii)$\Rightarrow$(i): If (iii) holds, then $C:=\norm{\sum_{k=1}^\infty D_kU_k}_2$ is a well-defined and finite constant and, thanks to the symmetry of $D_k$, $k\in\N$, for any $F\in\dom(D)$ one further has that 
\begin{align*}
\Babs{\langle DF,U\rangle_{L^2(\kappa\otimes\P_\F)}}&=\Bbabs{\sum_{k=1}^\infty \E\bigl[D_kF U_k\bigr]}=\Bbabs{\sum_{k=1}^\infty \E\bigl[F D_kU_k\bigr]}
=\Bbabs{\E\biggl[F \sum_{k=1}^\infty D_kU_k\biggr]}\\
&\leq \norm{F}_2 \BEnorm{\sum_{k=1}^\infty D_kU_k}=C\norm{F}_2,
\end{align*}
implying $U\in\dom(\delta)$. Note that the $L^2(\P)$-convergence of the series has been used to obtain the third equality.\smallskip\\
(iii)$\Rightarrow$(iv): This is clear.\smallskip\\
(iv)$\Rightarrow$(ii): Let $S<\infty$ be the supremum in (iv). As in \eqref{domdel1}, for $m\in\N$, we have  
\begin{align}\label{domdel2}
+\infty&>S\geq\E\biggl[\Bigl(\sum_{k=1}^m D_kU_k\Bigr)^2\biggr]=\sum_{M\in\Pot_{fin}(\N)}\E\Bigl[\Bigl(\sum_{k\in M\cap[m]}U_{k,M}\Bigr)^2\Bigr].
\end{align} 
If the sum in (ii) were not finite, then there would exist finitely many $M_1,\dotsc,M_r\in\Pot_{fin}(\N)$ such that 
\[\sum_{j=1}^r\E\Bigl[\Bigl(\sum_{k\in M_j}U_{k,M_j}\Bigr)^2\Bigr]\geq S+1.\]
But then, choosing $m\geq\max(M_1\cup\ldots\cup M_r)$ in \eqref{domdel2} would lead to the contradiction $S\geq S+1$, which is impossible for finite $S$. Hence (iv) implies (ii).
\smallskip\\
To prove the claimed explicit formula for $\delta U$ recall that, by \eqref{intpartsM}, $\delta U$ is characterized by
\begin{equation*}
\E\bigl[F\delta U\bigr]=\langle DF,U\rangle_{L^2(\kappa\otimes\P_\F)}=\sum_{k=1}^\infty \E\bigl[D_kF U_k\bigr]=\lim_{n\to\infty} \E\bigl[F \sum_{k=1}^nD_kU_k\bigr],
\end{equation*}
for all $F\in\dom(D)$, where the last equality holds again due to the symmetry of $D_k$, $k\in\N$. Now, since $F\in L^2_\X$, by (iii) we can interchange the limit and the expectation here to obtain
\[\E\bigl[F\delta U\bigr]=\lim_{n\to\infty} \E\bigl[F \sum_{k=1}^nD_kU_k\bigr]=\E\bigl[F \sum_{k=1}^\infty D_kU_k\bigr],\]
implying that $\delta U=\sum_{k=1}^\infty D_kU_k$ since $\dom(D)$ is dense in $L^2_\X$.
\end{proof}

 As for the Malliavin derivative $D$ and the divergence $\delta$, we can give more explicit characterizations of $\dom(L)$.

\begin{prop}\label{domL}
 For $F\in L^2_\X$ with the infinite Hoeffding decomposition \eqref{genhd2} the following conditions are equivalent:
 \begin{enumerate}[{\normalfont (i)}]
  \item $F\in\dom(L)$.
  \item The series $\sum_{k=1}^\infty D_kF$ converges in $L^2_\X$. 
  \item $\displaystyle \sup_{m\in\N}\E\Bigl[\Bigl(\sum_{k=1}^m D_kF\Bigr)^2\Bigr]<\infty$.
	\item $\displaystyle\sum_{p=1}^\infty p^2\,\E\bigl[(F^{(p)})^2\bigr]=\sum_{p=1}^\infty p^2\,\E\bigl[J_p(F)^2\bigr] = \sum_{M\in \Pot_{fin}(\N)}|M|^2\,\E[F_{M}^2]<\infty$.
 \end{enumerate}
 In this case, $LF=-\sum_{k=1}^\infty D_kF$ has the infinite Hoeffding decomposition \\
 $\displaystyle LF=-\sum_{M\in\Pot_{fin}(\N)}|M|\, F_M =-\sum_{p=1}^\infty p\,F^{(p)}=-\sum_{p=1}^\infty p\,J_p(F)$.
\end{prop}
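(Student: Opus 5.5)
The idea is to reduce everything to the already established characterizations of $\dom(D)$ (Proposition \ref{domD}) and $\dom(\delta)$ (Proposition \ref{domdelta}). The process $U=DF$ is special: $U_k=D_kF$ and $D_kU_k=D_kD_kF=D_kF$, so the relevant sums reduce to $\sum_k D_kF$. The key computation, following \eqref{domdel1}, uses Proposition \ref{HoeffD} and orthogonality of Hoeffding components. For every $m\in\N$ it gives
\[
\E\Bigl[\Bigl(\sum_{k=1}^m D_kF\Bigr)^2\Bigr]=\sum_{k,l=1}^m\sum_{M\in\Pot_{fin}(\N):\,k,l\in M}\E\bigl[F_M^2\bigr]=\sum_{M\in\Pot_{fin}(\N)}\abs{M\cap[m]}^2\,\E\bigl[F_M^2\bigr],
\]
and similarly $\E[(\sum_{k=m+1}^n D_kF)^2]=\sum_M \abs{M\cap[m+1,n]}^2\E[F_M^2]$.

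(iii)$\Rightarrow$(iv): let $m\to\infty$ in the identity and apply monotone convergence, since $\abs{M\cap[m]}\uparrow\abs{M}$ for each finite $M$. This gives $\sum_M\abs{M}^2\E[F_M^2]\le\sup_m(\dots)<\infty$. The equality of the three expressions in (iv) follows from \eqref{genhd1}, orthogonality, and rearrangement of nonnegative series, exactly as in Proposition \ref{domD}.

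(iv)$\Rightarrow$(ii): the tail identity is dominated by $\abs{M}^2\E[F_M^2]\,\1_{\{M\cap[m+1,n]\neq\emptyset\}}$, which tends to $0$ by dominated convergence. Hence the partial sums are Cauchy in $L^2_\X$. (ii)$\Rightarrow$(iii) is trivial.

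(i)$\Leftrightarrow$(ii): if $F\in\dom(L)$, then $F\in\dom(D)$ and, by Lemma \ref{domDle}, $DF=(D_kF)_k\in\dom(\delta)$. Proposition \ref{domdelta}(iii) then says $\sum_k D_kD_kF=\sum_k D_kF$ converges. Conversely, (ii) implies (iv), and since $\abs{M}\le\abs{M}^2$, Proposition \ref{domD}(iv) gives $F\in\dom(D)$ with $DF=(D_kF)_k$. The process $U=DF$ lies in $L^2(\kappa\otimes\P_\F)$ and satisfies Proposition \ref{domdelta}(iii), so $DF\in\dom(\delta)$, i.e.\ $F\in\dom(L)$. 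By the formula in Proposition \ref{domdelta}, $LF=-\delta DF=-\sum_k D_kF$.

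For the Hoeffding decomposition of $LF$, use Proposition \ref{HoeffD} on the partial sums:
\[
\sum_{k=1}^m D_kF=\sum_M\abs{M\cap[m]}F_M.
\]
This converges in $L^2$ to $\sum_M\abs{M}F_M$, because the squared $L^2$-norm of the difference is $\sum_M(\abs{M}-\abs{M\cap[m]})^2\E[F_M^2]\to0$ by dominated convergence, with domination by $\abs{M}^2\E[F_M^2]$ from (iv). The series $\sum_M\abs{M}F_M$ converges unconditionally in $L^2$ since $\sum_M\abs{M}^2\E[F_M^2]<\infty$. Its components $\abs{M}F_M$ satisfy properties (a) and (c), so by the uniqueness in Proposition \ref{infhoeff} this is the infinite Hoeffding decomposition of $-LF$. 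Grouping by $\abs{M}=p$ yields $-\sum_p pF^{(p)}$.

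The main obstacle is the bookkeeping of the double sum with orthogonality, in particular justifying the interchange of the infinite Hoeffding series with the finite sum over $k$. This is harmless because the $k$-sum is finite, and $L^2$-convergent orthogonal expansions can be paired term by term.
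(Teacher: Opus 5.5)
Your proof is correct and follows essentially the paper's argument: the same orthogonality identity $\E[(\sum_{k=1}^m D_kF)^2]=\sum_M\abs{M\cap[m]}^2\E[F_M^2]$ with monotone convergence, and the same $L^2$-limit computation identifying $LF$. The only deviation is in (iv)$\Rightarrow$(i): you go through (iv)$\Rightarrow$(ii) via a Cauchy argument and then invoke Proposition \ref{domdelta}(iii)$\Rightarrow$(i), whereas the paper checks the adjoint condition directly, bounding $\abs{\langle DF,DG\rangle}\le(\sum_M\abs{M}^2\E[F_M^2])^{1/2}\norm{G}_2$ by Cauchy--Schwarz on Hoeffding components, and both routes are equally short (your explicit appeal to uniqueness of the Hoeffding decomposition for $LF$ is a small extra care the paper leaves implicit).
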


\begin{proof}
(i)$\Rightarrow$(ii): If $F\in\dom(L)$, then, by definition, $F\in\dom(D)$ and $DF=(D_kF)_{k\in\N}\in\dom(\delta)$. Since $D_kD_k=D_k$, by Proposition \ref{domdelta} this in particular implies that 
$\sum_{k=1}^\infty D_kF$ converges in $L^2_\X$.\smallskip\\
(ii)$\Rightarrow$(iii): This is clear. \smallskip\\
(iii)$\Rightarrow$(iv): Using orthogonality, for each fixed $m\in\N$ we have 
\begin{align*}
 & \E\Bigl[\Bigl(\sum_{k=1}^m D_kF\Bigr)^2\Bigr]
=\E\biggl[\Bigl(\sum_{k=1}^m \sum_{M\in\Pot_{fin}(\N):k\in M}F_M\Bigr)^2\biggr]\\
&=\sum_{k,l=1}^m\E\biggl[ \sum_{\substack{M,N\in\Pot_{fin}(\N):\\
 k\in M, l\in N}}F_M F_N\biggr] =\sum_{k,l=1}^m \sum_{\substack{M,N\in\Pot_{fin}(\N):\\
 k\in M, l\in N}}\E\bigl[F_M F_N\bigr]\\
&=\sum_{k,l=1}^m \sum_{\substack{M\in\Pot_{fin}(\N):\\ k,l\in M}}\E\bigl[F_M^2\bigr]
 =\sum_{M\in\Pot_{fin}(\N)}|M\cap[m]|^2\, \E\bigl[F_M^2\bigr].
\end{align*}
By (iii) the left hand side here is bounded in $m\in\N$. On the other hand, by the monotone convergence theorem, as $m\to\infty$, the right hand side converges to
\[\sum_{M\in\Pot_{fin}(\N)}|M|^2\, \E\bigl[F_M^2\bigr]=\sum_{p=1}^\infty p^2\,\E\bigl[(F^{(p)})^2\bigr],\]
which then must be finite as well. This proves (iv).\smallskip\\
(iv)$\Rightarrow$(i): By Proposition \ref{domD}, (iv) implies that $F\in\dom(D)$ and we thus need to make sure that $DF\in\dom(\delta)$, i.e. that there is a constant $C\in[0,\infty)$ with the property that 
\begin{align*}
\Babs{\langle DF,DG\rangle_{L^2(\kappa\otimes\P_\F)}}=\Bbabs{\sum_{k=1}^\infty \E\bigl[D_kF D_kG\bigr]}\leq C\norm{G}_2
\end{align*}
holds for any $G\in\dom(D)$. Let 
\[C:=\Biggl(\sum_{M\in \Pot_{fin}(\N)}|M|^2\,\E[F_{M}^2]\Biggr)^{1/2}\]
which is finite by (iv). 
Now, let $G\in \dom(D)$ be given with infinite Hoeffding decomposition
\[G=\sum_{M\in\Pot_{fin}(\N)}G_M.\]
Then, since  $F\in\dom(D)$ by (iv) and Proposition \ref{domD}, using orthogonality, we obtain
\begin{align*}
 &\Bbabs{\sum_{k=1}^\infty \E\bigl[D_kF D_kG\bigr]}=\Bbabs{\sum_{k=1}^\infty\sum_{\substack{M,N\in\Pot_{fin}(\N):\\
 k\in M\cap N}}\E\bigl[F_M G_N\bigr]}\leq\sum_{k=1}^\infty\sum_{\substack{M\in\Pot_{fin}(\N):\\ k\in M}}\Babs{\E\bigl[F_MG_M\bigr]}\\
 &=\sum_{M\in\Pot_{fin}(\N)}|M|\babs{\E\bigl[F_MG_M\bigr]}
 \leq\sum_{M\in\Pot_{fin}(\N)} |M| \Bigl(\E\bigl[F_M^2\bigr]\Bigr)^{1/2}\Bigl(\E\bigl[G_M^2\bigr]\Bigr)^{1/2} \\
 &\leq \biggl(\sum_{M\in\Pot_{fin}(\N)} |M|^2 \E\bigl[F_M^2\bigr]\biggr)^{1/2}
 \biggl(\sum_{M\in\Pot_{fin}(\N)}  \E\bigl[G_M^2\bigr]\biggr)^{1/2}=C\norm{G}_2,
\end{align*}
as desired.\smallskip\\
To prove the additional statement about the representation of $L$ in terms of the infinite Hoeffding decomposition, observe that
\[H:=-\sum_{M\in\Pot_{fin}(\N)}|M|\, F_M =-\sum_{p=1}^\infty p\,F^{(p)}\]
converges (unconditionally) in $L^2(\P)$ by (iv). On the other hand, we know that $-\sum_{k=1}^m D_kF$ converges in $L^2(\P)$ to $LF$ as $m\to\infty$. Moreover, for fixed $m\in\N$, Proposition \ref{HoeffD} and a short computation yield that 
\begin{align*}
&\E\biggl[\Bigl(-\sum_{k=1}^m D_kF -H\Bigr)^2\biggr]=\E\Biggl[\biggl(\sum_{k=1}^m \sum_{\substack{M\in\Pot_{fin}(\N):\\ k\in M}}F_M -\sum_{M\in\Pot_{fin}(\N)}|M|\, F_M\biggr)^2\Biggr]\\
&=\sum_{M\in\Pot_{fin}(\N)}\babs{M\setminus[m]}^2\,\E\bigl[F_M^2\bigr]\,,
\end{align*}
and the right hand side converges to $0$ as $m\to\infty$ by (iv) and the dominated convergence theorem. Hence, $LF=H$.
\end{proof}

\begin{remark}\label{domrem}
\begin{enumerate}[(a)]
\item Propositions \ref{domD}, \ref{domdelta} and \ref{domL} explicitly characterize the domains of the three Malliavin operators $D,\delta$ and $L$ and, hence, significantly extend the theory of \cite[Section 2]{DH}. We remark further that the description of $\dom(L)$ given in \cite[Definition 2.10]{DH} in general is incorrect as, by Proposition \ref{domD}, it actually describes $\dom(D)$.
\item From Propositions \ref{domD} and \ref{domL} it immediately follows that $\dom(L)\subseteq\dom(D)$. Moreover, it is clear from Proposition \ref{domL} that $F\in\dom(L)$, whenever $F\in\bigoplus_{p=0}^n\mathcal{H}_p$ for some $n\in\N$. In particular, if there is an $n\in\N$ such that $X_j$ is $\P$-a.s. constant for each $j\geq n+1$, then we have $\dom(D)=\dom(L)=L^2_\X$ and $\dom(\delta)=L^2(\kappa\otimes\P_\F)$. Recall that this is the case in the situation of functionals of only finitely many independent random variables.
\item Note that the characterizations in parts (iv) of Propositions \ref{domD} and \ref{domL} differ from those in terms of the chaotic decomposition in the Gaussian \cite{Nualart}, Poisson \cite{Lastsa} or Rademacher setting \cite{Privault} only in that a factor of $p!$ is missing from the sums. The reason for this is that, whereas the Hoeffding decomposition is given by a sum over finite subsets of $\N$, the multiple Wiener-It\^{o} integrals in these three cases involve permutations of the distinct arguments of the kernels. Apart from this conventional difference, the criteria are completely analogous to those in these three standard cases.  
\item A (possibly non-symmetric and non-homogeneous) Rademacher sequence is a sequence $\X=(X_n)_{n\in\N}$ of independent and $\{-1,1\}$-valued random variables such that $p_k:=\P(X_k=1)=1-q_k:=1-\P(X_k=-1)\in(0,1)$ for each $k\in\N$. For Rademacher sequences, the Malliavin operators $\hat{D},\hat{\delta}$ and $\hat{L}$ and the corresponding calculus have existed for several years (see e.g. \cite{Privault} for their definition and for a comprehensive treatment of the corresponding theory). As has been observed in the introduction of \cite{DH} (for the case of symmetric Rademacher sequences, i.e. $p_k=q_k=1/2$ for all $k\in\N$), the Malliavin derivative $D$ defined above does not reduce to the usual Malliavin $\hat{D}$ for a Rademacher sequence $\X$. We remark here that, actually, the derivatives $D_k$ and $\hat{D}_k$ are yet very similar. Indeed, a simple computation along the formula for $\hat{D}_k$ given in \cite[Proposition 7.3]{Privault} shows that, in fact, $D_k F=Y_k\hat{D}_kF$ for any square integrable functional $F$ of $\X$. Here, $Y_k=\frac{X_k+q_k-p_k}{2\sqrt{p_kq_k}}$ denotes the normalization of $X_k$, $k\in\N$. In view of Proposition \ref{domD} and \cite[Lemma 2.3]{KRT1}, and since $\hat{D}_kF$ and $Y_k$ are independent and $\E[Y_k^2]=1$, this in particular implies that $\dom(D)=\dom(\hat{D})$. Moreover, the respective Ornstein-Uhlenbeck generators $L$ and $\hat{L}$ are actually exactly the same (compare \cite[Proposition 10.1]{Privault} and Proposition \ref{domL} and note the different sign convention that $\hat{L}:=\hat{\delta}\hat{D}$ in \cite{Privault}). 
The similarity between $D_k$ and $\hat{D}_k$ is also reflected in the respective Stroock type formulas (see e.g. \cite[Remark 2.1]{KRT1} for the Rademacher case). Indeed, whereas one has 
$\frac{1}{p!}\E[\hat{D}_{i_1}\ldots \hat{D}_{i_p}F]=f_p(i_1,\dotsc,i_p)$ with $f_p$ the $p$-th symmetric kernel in the (Rademacher) chaotic decomposition $F=\E[F]+\sum_{p=1}^\infty \hat{J}_p(f_p)$, the formula in Corollary \ref{stroock} gives $\frac{1}{p!}\E[D_{i_1}\ldots D_{i_p}F\,|\,X_{i_1},\dotsc,X_{i_p}]=f_p(i_1,\dotsc,i_p)\prod_{l=1}^pY_{i_l}$. These strong similarities with the well-established Rademacher setting sustain our viewpoint that the derivative $D$ introduced in \cite{DH, Duer} is the natural starting point for a Malliavin structure on general product spaces. 
\item Similarly, the new formulae given in Propositions \ref{HoeffD} and \ref{domD} for the action of the $D_k$ and of $L$ in terms of the infinite Hoeffding decomposition as well as the Stroock formula in Corollary \ref{stroock} and the characterizations of $\dom(D)$ and $\dom(L)$ emphasize our point of view that the infinite Hoeffding decomposition from Proposition \ref{infhoeff} is the natural counterpart to the Wiener-It\^{o} chaos expansion for Gaussian, Poisson and Rademacher functionals.  
\end{enumerate}
\end{remark}

From the formula $LF=-\sum_{k=1}^\infty D_kF$ and the symmetry of the $D_k$ one immediately obtains that $L$ is \textit{symmetric}, i.e. that
\[\E\bigl[FLG\bigr]=\E\bigl[GLF\bigr],\quad F,G\in\dom(L).\]
We observe next that $L$ is in fact \textit{self-adjoint}, i.e. that $L=L^*$ with $L^*$ the Hilbert space adjoint of the generally unbounded operator $L$ and completely describe its spectrum. Recall that the \textit{spectrum} $\sigma(L)$ of $L$ is the set of all $\lambda\in\R$ such that the operator $L-\lambda \Id:\dom(L)\rightarrow L^2_\X$ is not bijective and that its \textit{point spectrum} $\sigma_p(L)$ consists of those $\lambda\in\sigma(L)$ such that $L-\lambda \Id:\dom(L)\rightarrow L^2_\X$ is not injective. The elements of $\sigma_p(L)$ are called \textit{eigenvalues} of $L$ and the linear subspace $\ker(L-\lambda\Id)\not=\{0\}$ is the 
\textit{eigenspace} corresponding to $\lambda\in\sigma_p(L)$. 

\begin{prop}\label{saL}
The operator $L:\dom(L)\rightarrow L^2_\X$ is self-adjoint, $-L$ is positive and one has $\sigma(L)=\sigma_p(L)\subseteq-\N_0=\{p\in\Z\,:\,p\leq 0\}$. Moreover, the eigenspace of $L$ corresponding to an eigenvalue $-p\in\sigma_p(L)$ is precisely given by the $p$-th Hoeffding space $\mathcal{H}_p$. 
\end{prop}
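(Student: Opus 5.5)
The whole argument rests on one observation. By Proposition \ref{infhoeff}, $L^2_\X=\bigoplus_{p=0}^\infty\mathcal{H}_p$ is an orthogonal Hilbert sum, and by Proposition \ref{domL} the operator $L$ is diagonal with respect to it. More precisely,
\[\dom(L)=\Bigl\{F\in L^2_\X:\sum_{p}p^2\norm{J_p(F)}_2^2<\infty\Bigr\}\quad\text{and}\quad LF=-\sum_p pJ_p(F).\]
So $L$ is unitarily a multiplication operator by the sequence $(-p)_{p\in\N_0}$, and every claim will be checked directly against this chaos representation.

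\emph{Positivity.} For $F\in\dom(L)$, orthogonality of the $\mathcal{H}_p$ gives
\[\E[F(-LF)]=\sum_{p}p\,\E[J_p(F)^2]\geq0.\]

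\emph{Self-adjointness.} Symmetry was already noted, so $L\subseteq L^*$, and it remains to prove $\dom(L^*)\subseteq\dom(L)$. Let $G\in\dom(L^*)$ and $H:=L^*G$. For each fixed $q$, every $F\in\mathcal{H}_q$ lies in $\dom(L)$ with $LF=-qF$. Hence
\[\E[FH]=\E[GLF]=-q\,\E[GF],\]
and therefore $J_q(H)=-qJ_q(G)$. Since $H\in L^2_\X$, we get
\[\sum_q q^2\norm{J_q(G)}_2^2=\sum_q\norm{J_q(H)}_2^2<\infty.\]
By Proposition \ref{domL}(iv) this means $G\in\dom(L)$, and then $L^*G=H=LG$.

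\emph{Spectrum.} Let $\lambda\in\R\setminus(-\N_0)$ and set $d:=\inf_{p}|p+\lambda|$. Then $d>0$, because the points $-p$ have no accumulation point and $\lambda$ is not one of them. Define
\[R_\lambda G:=-\sum_p(p+\lambda)^{-1}J_p(G).\]
This series converges in $L^2$ with $\norm{R_\lambda G}_2\le d^{-1}\norm{G}_2$. Its image lies in $\dom(L)$, since
\[\sum_p p^2(p+\lambda)^{-2}\norm{J_pG}_2^2\le C\norm{G}_2^2,\]
where $C$ is finite because $p/(p+\lambda)$ is bounded. Using the formula for $LF$, one checks $(L-\lambda\Id)R_\lambda=\Id$ on $L^2_\X$ and $R_\lambda(L-\lambda\Id)=\Id$ on $\dom(L)$. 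Hence $L-\lambda\Id$ is bijective, and $\sigma(L)\subseteq-\N_0$.

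\emph{Eigenvalues and eigenspaces.} If $F\in\dom(L)$ and $LF=-pF$, then comparing chaos components gives $(q-p)J_q(F)=0$ for all $q$. Thus $F=J_p(F)\in\mathcal{H}_p$. Conversely, $\mathcal{H}_p\subseteq\ker(L+p\Id)$ by Proposition \ref{domL}. So $\ker(L+p\Id)=\mathcal{H}_p$.

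\emph{$\sigma(L)=\sigma_p(L)$.} Take $-p\in\sigma(L)$ with $p\in\N_0$. If $\mathcal{H}_p=\{0\}$, then $L+p\Id$ is injective. In that case the same resolvent construction, summing only over $q\neq p$, still gives a bounded inverse, because $J_p\equiv0$ and $\inf_{q\neq p}|q-p|\geq1$. This contradicts $-p\in\sigma(L)$. Hence $\mathcal{H}_p\neq\{0\}$, i.e. $-p\in\sigma_p(L)$. This also identifies $\sigma_p(L)=\{-p:\mathcal{H}_p\neq\{0\}\}$.

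The only step needing care is the adjoint domain inclusion. The key point there is testing $G\in\dom(L^*)$ against elements of a single chaos $\mathcal{H}_q$, all of which lie in $\dom(L)$. Everything else is routine diagonal-operator bookkeeping.
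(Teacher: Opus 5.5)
Your proof is correct and takes essentially the same route as the paper. Both diagonalize $L$ through the infinite Hoeffding decomposition and Proposition \ref{domL}, then invert $L-\lambda\Id$ chaos by chaos. The one real difference is in proving $\dom(L^*)\subseteq\dom(L)$: you identify $J_q(L^*G)=-qJ_q(G)$ directly by testing against $\mathcal{H}_q$, while the paper argues by contraposition using the test functions $F_n=\sum_{p\le k_n}pG^{(p)}$, and both arguments are valid.
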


\begin{proof}
 Since $\mathcal{S}\subseteq\dom(L)$, $L$ is densely defined and, hence, the symmetry of $L$ and general functional analytic facts imply that $L\subseteq L^*$, that is $L^*$ is an extension of $L$. Regarding the first claim, it thus suffices to show that $\dom(L^*)\subseteq\dom(L)$. 
 Suppose on the contrary that $G\in L^2_\X\setminus\dom(L)$ and denote by $G=\sum_{p=0}^\infty G^{(p)}$ its infinite Hoeffding decomposition. Since $G\notin\dom(L)$, by Proposition \ref{domL} it holds that $\sum_{p=1}^\infty p^2\Var\bigl(G^{(p)}\bigr)=+\infty$. In particular, for any $n\in\N$, we may find $k_n\in\N$ such that 
 \[\sum_{p=1}^{k_n} p^2\Var\bigl(G^{(p)}\bigr)>n.\]
 Define
 \[F_n:=\sum_{p=1}^{k_n} p\,G^{(p)},\quad n\in\N.\]
Since $\sum_{p=1}^{k_n} p^4\Var\bigl(G^{(p)}\bigr)<\infty$,  by Proposition \ref{domL} we have $F_n\in\dom(L)$ for each $n\in\N$. However, from $LF_n=-\sum_{p=1}^{k_n} p^2\,G^{(p)}$ we obtain that
 \begin{align*}
  \babs{\langle LF_n,G\rangle}&=\sum_{p=1}^{k_n}p^2\,\Var\bigl(G^{(p)}\bigr)
  =\norm{F_n}_2^2>\sqrt{n}\norm{F_n}_2,\quad n\in\N.
 \end{align*}
Therefore, it follows from the general definition of $\dom(L^*)$ that $G\notin \dom(L^*)$. Thus, $\dom(L^*)\subseteq\dom(L)$ and, hence, $L=L^*$ as claimed. The positivity of $-L$ follows from the fact that for $F\in\dom(L)$ with infinite Hoeffding decomposition \eqref{genhd2} one has 
\begin{align*}
 \E\bigl[F(-LF)\bigr]=\sum_{p=0}^\infty p\, \E\Bigl[\bigl(F^{(p)}\bigr)^2\Bigr]\geq0,
\end{align*}
where the orthogonality of the $F^{(p)}$ and $L^2(\P)$-convergence have been used.
\smallskip\\
We next show that $\sigma_p(L)\subseteq-\N_0$. By Proposition \ref{domL} we have that $LF=-pF$ holds for any $F\in\mathcal{H}_p$, $p\in\N_0$. Conversely, suppose that $G\in \dom(L)$ satisfies $LG=\lambda G$ for some $\lambda\in\R$ and let $G= \sum_{p=0}^\infty G^{(p)}$ denote its infinite Hoeffding decomposition. Then, from Proposition \ref{domL} we have 
\[\sum_{p=0}^\infty \lambda\,G^{(p)}=\lambda G=LG=-\sum_{p=0}^\infty p\, G^{(p)}\]
and the uniqueness of the Hoeffding decomposition for $LG$ implies that 
$\lambda\,G^{(p)}=-p\,G^{(p)}$ holds $\P$-a.s. for any $p\in\N_0$. This implies that $\Var(G^{(p)})>0$ for at most one $p\in\N_0$ and if this $p$ exists, then $\lambda=-p$. Hence, we have shown that $\sigma_p(L)\subseteq-\N_0$ and that for $-p\in\sigma_p(L)$ the corresponding eigenspace is given by $\mathcal{H}_p$. \smallskip\\
We finally show that $\sigma(L)=\sigma_p(L)$. To this end, it suffices to prove that, for any $\lambda\in\R\setminus\sigma_p(L)$ the operator 
$T_\lambda:=L-\lambda\Id:\dom(L)\rightarrow L^2_\X$ is bijective and that there is a $c_\lambda>0$ such that $\norm{T_\lambda F}_2\geq c_\lambda\norm{F}_2$ for any $F\in\dom(L)$. We have already proved that $T_\lambda$ is injective, so let $G\in L^2_\X$ with the infinite Hoeffding decomposition $G= \sum_{p=0}^\infty G^{(p)}=\sum_{p\in \sigma_p(L)} G^{(p)}$ be given. Note that the second representation for $G$ holds, since $G^{(p)}=0$ $\P$-a.s. for any $p\in-\N_0\setminus\sigma_p(L)$ as $\mathcal{H}_p=\{0\}$ in this case. 
Then, 
\[F:=-\sum_{p\in \sigma_p(L)} \frac{1}{\lambda+p} G^{(p)}\]
is well-defined and contained in $\dom(L)$ by Proposition \ref{domL} and one has $T_\lambda F=G$. Moreover, for any $F\in\dom(L)$  with infinite Hoeffding decomposition $F= \sum_{p\in \sigma_p(L)}  F^{(p)}$ one has 
\begin{align*}
 \norm{T_\lambda F}_2^2=\sum_{p\in \sigma_p(L)} (\lambda+p)^2 \norm{F^{(p)}}_2^2
 \geq d\bigl(\lambda,\sigma_p(L)\bigr)^2\norm{F}_2^2,
\end{align*}
where $d(\lambda,\sigma_p(L)):=\min\{|\lambda-q|\,:\,q\in\sigma_p(L)\}>0$ since $\lambda\notin\sigma_p(L)$ and $\sigma_p(L)\subseteq-\N_0$ . Thus, we can take $c_\lambda=d(\lambda,\sigma_p(L))$.
\end{proof}

\begin{remark}\label{saLrem}
\begin{enumerate}[(a)]
\item In the situation of Proposition \ref{saL} it is not always true that every $-p\in-\N_0$ is an eigenvalue of $L$. For instance, if for some $n\in\N$, the random variables $X_{n+1},X_{n+2},\dotsc$ are $\P$-a.s.constant, then one necessarily has $\mathcal{H}_p=\{0\}$ for each $p\geq n+1$ due to the classical Hoeffding decomposition for $L^2(\P)$-functionals of $X_1,\dotsc,X_n$.
\item Since $\E[LF]=0$ for any $F\in\dom(L)$, no random variable with non-zero mean is contained in the image $\im(L)$ of $L$. Suppose, on the contrary, that $G\in L^2_\X$ satisfies $\E[G]=0$. Then, $G$ has infinite Hoeffding decomposition of the form $G=\sum_{p=1}^\infty G^{(p)}=\sum_{ M\in\Pot_{fin}(\N)}G_M $ and it follows from Proposition \ref{domL} that 
\[F:=-\sum_{p=1}^\infty \frac{1}{p}G^{(p)}=-\sum_{\emptyset\not= M\in\Pot_{fin}(\N)}\frac{1}{|M|}G_M \in\dom(L)\]
 and $LF=G$. Moreover since, by Proposition \ref{saL}, $\ker(L)$ only consists of the constants, $F$ is the only centered random variable in $\dom(L)$ with this property. As usual, we write $F=L^{-1}G$ and call $L^{-1}$ the \textit{pseudo-inverse} of the Ornstein-Uhlenbeck generator $L$. Note that, with this definition, one has 
\begin{itemize}
\item $L L^{-1}G=G$ for each centered $G\in L^2_\X$.
\item $L^{-1} LF=F-\E[F]$ for any $F\in\dom(L)$.
\end{itemize}
\item The Ornstein-Uhlenbeck generator $L$ and its pseudo-inverse $L^{-1}$ have also been analyzed in \cite[Lemma 2.4]{Duer}. There, it has been provided that $L$ is \textit{essentially} self-adjoint, positive, has pure point spectrum $-\N$ (which, in view of part (a) of this remark, is not completely correct) and that its image is dense in the subspace $\{1\}^\perp$ of $L^2_\X$ of centered random variables. Note that our Proposition \ref{saL} strengthens some of these findings, by establishing that $L$ is in fact genuinely self-adjoint and we have moreover just seen that the image of $L$ actually coincides with $\{1\}^\perp$.
\end{enumerate}
\end{remark}

\subsection{Ornstein-Uhlenbeck semigroup and Mehler formula}\label{OU}
Thanks to the infinite Hoeffding decomposition, we are able to define the Ornstein-Uhlenbeck semigroup in a similar way as in the Gaussian \cite{Nualart}, Poisson \cite{Lastsa} and Rademacher \cite{Privault} settings. Thus, for $F\in L^2_\X$ with inifinite Hoeffding decomposition \eqref{genhd2} and $t\in[0,\infty)$ we define
\begin{equation*}
P_tF:=\sum_{M\in\Pot_{fin}(\N)}e^{-t|M|} F_M=\sum_{p=0}^\infty e^{-tp} J_p(F),
\end{equation*} 
which is obviously again an element of $L^2_\X$. Then, it is easy to see that the family $(P_t)_{t\geq0}$ of operators has the semigroup property, i.e. $P_0=\Id$ and $P_tP_s=P_{t+s}$, $s,t\geq0$, and we call it the \textit{Ornstein-Uhlenbeck semigroup} on $L^2_\X$. Note that, by Proposition \ref{HoeffD}, the operators $D_k$ and $P_t$ commute, that is, for $F\in L^2_\X$, $t\geq0$ and $k\in\N$ one has 
\begin{equation}\label{commrel}
D_kP_tF= \sum_{M\in\Pot_{fin}(\N):k\in M}e^{-t|M|} F_M=P_tD_kF
\end{equation}
and, if $F$ is moreover centered, one further has
\begin{equation}\label{commrel2}
D_kL^{-1}F= -D_k\sum_{\emptyset\not=M\in\Pot_{fin}(\N)}\frac{1}{|M|} F_M=-\sum_{\emptyset\not=M\in\Pot_{fin}(\N) :k\in M}\frac{1}{|M|} F_M=L^{-1} D_kF.
\end{equation}

\begin{remark}\label{commrelrem}
\begin{enumerate}[(a)]
\item The commutation relation \eqref{commrel} essentially differs from those that hold in the Gaussian, Poisson \cite{LPS} and Rademacher situations \cite{KRT2}, respectively, in that a factor of $e^{-t}$ is missing from the right hand side of \eqref{commrel}. Incidentally, in \cite[Theorem 2.13]{DH} it is incorrectly stated that $D_kP_tF=e^{-t}P_tD_kF$ also holds in the present situation.  
\item The operator $L$ is indeed the infinitesimal generator of the semigroup $(P_t)_{t\geq0}$. To see this, let $F\in \dom(L)$ with inifinite Hoeffding decomposition \eqref{genhd2} be fixed and observe that with $F_t:=t^{-1}(P_tF-F)$, $t>0$, one has
\begin{align*}
\norm{F_t-LF}_2^2&=\sum_{p=0}^\infty\biggl(\frac{e^{-tp}-1}{t}+p\biggr)^2\E\bigl[J_p(F)^2\bigr]\stackrel{t\downarrow0}{\longrightarrow}0
\end{align*}
by the dominated convergence theorem, which applies thanks to Proposition \ref{domL} as the square of the bracket in the above sum is bounded by $p^2$ for all $t>0$.
\item In \cite{DH} the semigroup $(P_t)_{t\geq0}$ has been introduced in a different way by first considering functionals of finitely many independent random variables, in which case a suitable Markov process with infinitesimal generator $L$ may be explicitly constructed via a Gibbs sampling procedure (see also the discussion in the beginning of Subsection 2.1 in \cite{D23}), and then invoking the Hille-Yosida-Theorem for the general case. We emphasize that our definition of $(P_t)_{t\geq0}$ via the infinite Hoeffding decomposition is more direct and also more in the spirit of the corresponding usual definitions in the situation of Gaussian \cite[Definition 1.4.1]{Nualart}, Poisson \cite[Section 7]{Lastsa} and Rademacher functionals \cite[Section 10]{Privault}.
\end{enumerate}
\end{remark}

The following result, the \textit{Mehler formula}, gives a very useful alternative representation for the semigroup $(P_t)_{t\geq0}$ as the transition semigroup of the Markov process $(\X(t))_{t\geq0}$ that has been defined in Subsection \ref{setup}. By Remark \ref{commrelrem} (b), this further implies that the operator $L$ is the infinitesimal generator of $(\X(t))_{t\geq0}$, which, in view of Proposition \ref{saL} is thus reversible. In a slightly different form, this formula has also been stated in \cite[Theorem 2.12]{DH} and a proof has been outlined. We include a neat probabilistic proof for the sake of completeness. Note that analogous results hold for Gaussian \cite{Nualart}, Poisson \cite{LPS} and Rademacher functionals \cite{KRT2}.

\begin{prop}[Mehler formula]\label{Mehler}
For any $t\in[0,\infty)$ and $F\in L^2_\X$ with representative $f\in L^2(\mu)$ one has 
\[P_tF=\E\Bigl[f\bigl(\X(t)\bigr)\,\Big|\,\X\Bigr].\]
\end{prop}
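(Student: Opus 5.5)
Both sides of the Mehler formula are bounded linear operators on $L^2_\X$, so it suffices to check it on the Hoeffding components and then pass to the limit. Define $Q_tF:=\E[f(\X(t))\,|\,\X]$. This is well defined: if $f=g$ $\mu$-a.s., then $f(\X(t))=g(\X(t))$ a.s., because $\X(t)\sim\mu$. By conditional Jensen and $\X(t)\stackrel{d}{=}\X$,
\[
\E\bigl[(Q_tF)^2\bigr]\leq \E\bigl[f(\X(t))^2\bigr]=\E[F^2],
\]
so $Q_t$ is a linear contraction on $L^2_\X$. The operator $P_t$ is also a contraction, since it multiplies orthogonal components by $e^{-t|M|}\le 1$. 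By Proposition \ref{infhoeff}, $F=\sum_{M}F_M$ converges unconditionally in $L^2(\P)$, and both operators are continuous. It therefore suffices to prove $Q_tF_M=e^{-t|M|}F_M$ for every $M\in\Pot_{fin}(\N)$.

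Fix $M=\{i_1,\dots,i_p\}$ and write $F_M=h(X_{i_1},\dots,X_{i_p})$ with $h$ a measurable function of the coordinates in $M$. Property (c) of the Hoeffding components, combined with independence, says that $\int h\,d\mu_{i_j}=0$ in each single coordinate (almost everywhere in the others). Then $F_M(t)=h(X_{i_1}(t),\dots,X_{i_p}(t))$, and we condition additionally on $(Z_k)_{k\in M}$. On the event where exactly the coordinates in $K\subseteq M$ have been resampled (that is, $Z_k\le t$ iff $k\in K$), the variable $F_M(t)$ is $h$ evaluated with $X'_k$ in the slots $k\in K$ and $X_k$ in the remaining slots. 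Since $\X'$ is independent of $\X$, integrating out $X'_k$ for $k\in K$ gives exactly $\E[F_M\,|\,\F_{M\setminus K}]$. By property (c) this vanishes unless $K=\emptyset$.

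Using the independence of the $Z_k$ from $(\X,\X')$, we obtain
\[
\E\bigl[F_M(t)\,\big|\,\X\bigr]=\sum_{K\subseteq M}\P\bigl(Z_k\le t\ \forall k\in K,\ Z_k>t\ \forall k\in M\setminus K\bigr)\,\E\bigl[F_M\,|\,\F_{M\setminus K}\bigr]=\P(Z_k>t\ \forall k\in M)\,F_M .
\]
The last probability equals $e^{-t|M|}$, which is the claim on components.

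The main obstacle is to make the step ``integrating out the resampled coordinates'' rigorous. I would do it by Fubini on the product of the laws of $\X$, $(X'_k)_{k\in M}$ and $(Z_k)_{k\in M}$: compute $\E[F_M(t)\,G]$ for an arbitrary bounded $\F$-measurable $G$, and check that it equals $e^{-t|M|}\E[F_MG]$. This requires integrability of $h$ under mixed product measures, which holds because every mixture of coordinates of $\X$ and $\X'$ has law $\mu$. The passage to the limit over $M$ then uses only the contraction property of $Q_t$ established above.
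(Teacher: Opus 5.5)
Your proof is correct and takes essentially the same route as the paper: reduce to the Hoeffding components by $L^2$-continuity, split each $F_M$ according to which coordinates of $M$ are resampled (your $K\subseteq M$ is the paper's $\eps\in\{0,1\}^M$), factor out the probabilities using the independence of the $Z_k$, and kill every term with $K\neq\emptyset$ by degeneracy. Your contraction argument for $Q_t$ makes rigorous the interchange of the conditional expectation with the infinite series, which the paper covers only with the phrase ``by $L^2(\P)$-convergence and independence''.
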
 

\begin{proof}
Let $t\geq0$ and let $F$ again have the inifinite Hoeffding decomposition \eqref{genhd2}. Then, by the factorization lemma, for each $M\in\Pot_{fin}(\N)$, there is a measurable function $f_M$ such that 
$F_M=f_M(X_j,j\in M)$. For $M\in\Pot_{fin}(\N)$ and $\eps=(\eps_j)_{j\in M}\in\{0,1\}^M$ let $\X^{M,\eps}:=(X^{M,\eps}_j)_{j\in M}$ be given by 
\[X^{M,\eps}_j:=\begin{cases}
X_j',&\eps_j=0\\
X_j,&\eps_j=1.
\end{cases}\]
Then, by $L^2(\P)$-convergence and independence, we have 
\begin{align*}
&\E\Bigl[f\bigl(\X(t)\bigr)\,\Big|\,\X\Bigr]=\sum_{M\in\Pot_{fin}(\N)}\E\Bigl[f_M\bigl(\X_j(t)\bigr)\,\Big|\,\X\Bigr]\\
&=\sum_{M\in\Pot_{fin}(\N)}\sum_{\eps=(\eps_j)_{j\in M}\in\{0,1\}^M}\E\biggl[\prod_{j\in M}\Bigl(\1_{\{Z_j>t\}}\eps_j+\1_{\{Z_j\leq t\}}(1-\eps_j)\Bigr)f_M\bigl(\X^{M,\eps}\bigr)\,\bigg|\,\X\biggr]\\
&=\sum_{M\in\Pot_{fin}(\N)}\sum_{\eps=(\eps_j)_{j\in M}\in\{0,1\}^M}\prod_{j\in M}\Bigl(e^{-t}\eps_j+(1-e^{-t})(1-\eps_j)\Bigr) \E\Bigl[f_M\bigl(\X^{M,\eps}\bigr)\,\Big|\,\X\Bigr].
\end{align*}
Now note that by the degeneracy of $F_M$ and the independence of $\X$ and $\X'$ we have 
\begin{equation*}
\E\Bigl[f_M\bigl(\X^{M,\eps}\bigr)\,\Big|\,\X\Bigr]=\begin{cases}
F_M,& \text{if }\eps_j=1\text{ for all } j\in M,\\
0,&\text{otherwise}.
\end{cases}
\end{equation*}
Thus, we conlcude that 
\[\E\Bigl[f\bigl(\X(t)\bigr)\,\Big|\,\X\Bigr]=\sum_{M\in\Pot_{fin}(\N)} F_M\prod_{j\in M} e^{-t}=\sum_{M\in\Pot_{fin}(\N)}e^{-t|M|}  F_M=P_tF,\]
as claimed. 
\end{proof}

The pseudo-inverse $L^{-1}$ of $L$ is in general difficult to compute or to keep track of, when the infinite Hoeffding decomposition is not available. For such cases, the following result, in combination with Proposition \ref{Mehler}, turns out to be very useful. Analogous formulas hold on the Poisson space \cite[Theorem 3.2]{LPS} and for Rademacher functionals \cite{KRT2}.

\begin{lemma}\label{pilemma}
Let $F\in L^2_\X$ be centered. Then, 
\[-L^{-1}F=\int_0^\infty P_tF dt,\]
where the right hand side denotes a Bochner integral with values in $L^2(\P)$.
\end{lemma}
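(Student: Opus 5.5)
The plan is to work entirely on the level of the infinite Hoeffding decomposition, using the definition of $P_t$ from this subsection and the description of $L^{-1}$ from Remark \ref{saLrem} (b). Since $F$ is centered, $F_\emptyset=\E[F]=0$. Hence $F=\sum_{p\geq1}J_p(F)$ and
\[P_tF=\sum_{p=1}^\infty e^{-tp}J_p(F),\qquad -L^{-1}F=\sum_{p=1}^\infty \frac1p J_p(F).\]

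The first step is to check that the Bochner integral exists. The map $t\mapsto P_tF$ from $[0,\infty)$ to $L^2_\X$ is continuous. This follows from
\[\norm{P_tF-P_sF}_2^2=\sum_{p\geq1}(e^{-tp}-e^{-sp})^2\E[J_p(F)^2],\]
which tends to $0$ as $s\to t$ by dominated convergence, the summands being bounded by $4\E[J_p(F)^2]$. Continuity gives strong measurability, since the range of a continuous map on $[0,\infty)$ is separable. For integrability we use
\[\norm{P_tF}_2^2=\sum_{p\geq1}e^{-2tp}\E[J_p(F)^2]\leq e^{-2t}\norm{F}_2^2.\]
So $\norm{P_tF}_2\leq e^{-t}\norm{F}_2$, which is integrable on $[0,\infty)$. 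This is exactly where centeredness enters: without it the constant term would not decay.

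The second step identifies the integral. Let $I:=\int_0^\infty P_tF\,dt\in L^2_\X$. Each orthogonal projection $J_q$ is a bounded linear operator, and bounded linear operators commute with Bochner integrals. Using $J_qP_tF=e^{-tq}J_q(F)$, we get for $q\geq1$
\[J_q(I)=\int_0^\infty e^{-tq}\,dt\;J_q(F)=\frac1q J_q(F),\]
and $J_0(I)=0$. By the uniqueness in Proposition \ref{infhoeff} and $I=\sum_q J_q(I)$, this gives $I=\sum_{q\geq1}q^{-1}J_q(F)=-L^{-1}F$.

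Alternatively, one can pair $I$ with an arbitrary $G\in L^2_\X$ and interchange the integral with the sum over $p$ via Fubini, justified by $\sum_p\int_0^\infty e^{-tp}|\E[J_p(F)J_p(G)]|\,dt<\infty$ and Cauchy–Schwarz. The only point needing care is the measurability and integrability justification in the first step; the rest is routine.
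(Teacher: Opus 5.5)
Your proof is correct. The integrability step matches the paper's, which bounds $e^{-2t|M|}\le e^{-2t}$ for $M\neq\emptyset$ to get $\int_0^\infty\norm{P_tF}_2\,dt\le\norm{F}_2$. You also verify strong measurability via continuity of $t\mapsto P_tF$, which the paper tacitly assumes when it invokes Bochner's criterion.

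The real difference is how you identify the integral.

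\emph{The paper's route.} It truncates to $\sum_{M\subseteq[n]}e^{-t|M|}F_M$ and integrates this finite sum termwise, obtaining $\sum_{\emptyset\neq M\subseteq[n]}|M|^{-1}F_M\to-L^{-1}F$. It then controls the remainder with the triangle inequality for Bochner integrals and the same $e^{-t}$ bound:
$\bigl\lVert\int_0^\infty\bigl(P_tF-\sum_{M\subseteq[n]}e^{-t|M|}F_M\bigr)\,dt\bigr\rVert_2\le\bigl(\sum_{M\not\subseteq[n]}\E[F_M^2]\bigr)^{1/2}\to0$.

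\emph{Your route.} You pass each bounded projection $J_q$ through the Bochner integral, read off $J_q(I)=q^{-1}J_q(F)$, and reassemble $I=\sum_q J_q(I)$ via Proposition \ref{infhoeff}. This is the standard spectral-calculus argument. It is shorter and needs no tail estimate, at the price of citing the (standard) fact that bounded operators commute with Bochner integrals.

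The paper's argument is more hands-on. It also yields an explicit approximation of $-L^{-1}F$ by finite Hoeffding truncations, with a quantitative error bound.
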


\begin{proof}
Let $F$ have the inifinite Hoeffding decomposition \eqref{genhd2}. We first show that the integral on the right hand side is a well-defined random variable in $L^2(\P)$. By Bochner's integrability criterion this follows from
\begin{align*}
&\int_0^\infty \norm{P_tF}_2 dt=\int_0^\infty \Biggl(\E\biggl[\Bigl(\sum_{\substack{ M\in\Pot_{fin}(\N): M\not=\emptyset}} e^{-t|M|} F_M\Bigr)^2\biggr]\Biggr)^{1/2}dt\\
&=\int_0^\infty \Biggl(\sum_{\substack{ M\in\Pot_{fin}(\N): M\not=\emptyset}} e^{-2t|M|}\E\bigl[F_M^2\bigr]\Biggr)^{1/2}dt
\leq \int_0^\infty e^{-t}\Biggl(\sum_{\substack{ M\in\Pot_{fin}(\N): M\not=\emptyset}} \E\bigl[F_M^2\bigr]\Biggr)^{1/2}dt\\
&=\norm{F}_2\int_0^\infty e^{-t}dt=\norm{F}_2<\infty,
\end{align*} 
where we have used that $F_\emptyset=\E[F]=0$. Next, by the definition of $L^{-1}$ we have
\begin{align*}
L^{-1}F=-\sum_{\emptyset\not= M\in\Pot_{fin}(\N)}\frac{1}{|M|} F_M.
\end{align*}
 Moreover, for $n\in\N$, again since $F_\emptyset=\E[F]=0$, we have
\begin{align*}
\int_0^\infty \sum_{ M\in\Pot([n])}e^{-t|M|} F_Mdt=\sum_{\emptyset\not= M\in\Pot([n])}\frac{1}{|M|} F_M
\end{align*}  
and the right hand side converges in $L^2(\P)$ to $-L^{-1} F$ as $n\to\infty$. Thus, it remains to show that
\[\BEnorm{ \int_0^\infty \biggl(P_tF-\sum_{ M\in\Pot([n])}e^{-t|M|} F_M\Bigr)dt}\stackrel{n\to\infty}{\longrightarrow}0.\]
Indeed, similarly as above, using the triangle inequality for Bochner integrals we obtain
\begin{align*}
&\BEnorm{ \int_0^\infty \biggl(P_tF-\sum_{ M\in\Pot([n])}e^{-t|M|} F_M\Bigr)dt}\leq \int_0^\infty \BEnorm{P_tF-\sum_{ M\in\Pot([n])}e^{-t|M|} F_M}dt\\
&=\int_0^\infty \BEnorm{\sum_{\substack{ M\in\Pot_{fin}(\N):\\ M\cap[n+1,\infty)\not=\emptyset}}e^{-t|M|} F_M}dt
=\int_0^\infty \Biggl(\sum_{\substack{ M\in\Pot_{fin}(\N):\\ M\cap[n+1,\infty)\not=\emptyset}} e^{-2t|M|}\E\bigl[F_M^2\bigr]\Biggr)^{1/2}dt \\
&\leq \int_0^\infty e^{-t}\Biggl(\sum_{\substack{ M\in\Pot_{fin}(\N):\\ M\cap[n+1,\infty)\not=\emptyset}} \E\bigl[F_M^2\bigr]\Biggr)^{1/2}dt
=\Biggl(\sum_{\substack{ M\in\Pot_{fin}(\N):\\ M\cap[n+1,\infty)\not=\emptyset}} \E\bigl[F_M^2\bigr]\Biggr)^{1/2},
\end{align*}
which converges to zero as $n\to\infty$ by the dominated convergence theorem as $F\in L^2_\X$.

\end{proof}

To prepare the proof of the next result, suppose that $G\in L^2_\X$ with representative $g\in L^2(\mu)$ is a random variable whose infinite Hoeffding decomposition is of the form 
\begin{equation}\label{HDG}
G=\sum_{\substack{M\in \Pot_{fin}(\N):\\k_1,\dotsc,k_d\in M}}G_M
\end{equation}
for some fixed $d\in\N$ and pairwise distinct, fixed $k_1,\dotsc,k_d\in\N$. Then, by the factorization lemma, there are measurable functions $g_M$ such that $G_M=g_M(X_j,j\in M)$ for all $M\in \Pot_{fin}(\N)$ with $k_1,\dotsc,k_d\in M$. Now, for primarily fixed $y_1\in E_{k_1},\dotsc,y_d\in E_{k_d}$, we define 
\begin{align}\label{Gtilde}
&\tilde{G}(y_1,\dotsc,y_d):=g\bigl(y_1,\dotsc,y_d,(X_j)_{j\in\N\setminus\{k_1,\dotsc,k_d\}}\bigr)\notag\\
&=\sum_{\substack{M\in \Pot_{fin}(\N):\\k_1,\dotsc,k_d\in M}}g_{M}\bigl(y_1,\dotsc,y_d,(X_j)_{j\in M\setminus\{k_1,\dotsc,k_d\}}\bigr)\notag\\
&=\sum_{\substack{N\in \Pot_{fin}(\N):\\k_1,\dotsc,k_d\notin N}}g_{N\cup\{k_1,\dotsc,k_d\}}\bigl(y_1,\dotsc,y_d,(X_j)_{j\in N}\bigr)
\end{align} 
in such way that $\tilde{G}(X_{k_1},\dotsc,X_{k_d})=G$. Note further that, due to the degeneracy of the $G_M$, the right hand side of \eqref{Gtilde} coincides with the infinite Hoeffding decomposition of $\tilde{G}(y_1,\dotsc,y_d)$.  
Hence, by definition of $P_t$,
\begin{align*}
h_t(y_1,\dotsc,y_d)&:=
P_t\bigl(\tilde{G}(y_1,\dotsc,y_d)\bigr)=\sum_{\substack{N\in \Pot_{fin}(\N):\\k_1,\dotsc,k_d\notin N}} e^{-t|N|}g_{N\cup\{k_1,\dotsc,k_d\}}\bigl(y_1,\dotsc,y_d,(X_j)_{j\in N}\bigr)\\
&=\sum_{\substack{M\in \Pot_{fin}(\N):\\k_1,\dotsc,k_d\in M}}e^{-t(|M|-d)}g_{M}\bigl(y_1,\dotsc,y_d,(X_j)_{j\in M\setminus\{k_1,\dotsc,k_d\}}\bigr)
\end{align*}
and, thus,
\[h_t(X_{k_1},\dotsc,X_{k_d})= \sum_{\substack{M\in \Pot_{fin}(\N):\\k_1,\dotsc,k_d\in M}}e^{-t(|M|-d)}g_{M}\bigl(X_j,j\in M\bigr)=e^{td} P_tG\] 
and
\begin{equation}\label{mf1}
P_tG=e^{-td}h_t(X_{k_1},\dotsc,X_{k_d}).
\end{equation}
We stress that both $h_t$ and $\tilde{G}(y_1,\dotsc,y_d)$ implicitly depend on $d$ and $k_1,\dotsc,k_d$ and that we may therefore more precisely denote these quantities by $\tilde{G}_{k_1,\dotsc,k_d}(y_1,\dotsc,y_d)$ and $h_{(k_1,\dotsc,k_d,t)}$, respectively.

Note that, with the definition
\[\tilde{g}_{(y_1,\dotsc,y_d)}\bigl((x_j)_{j\in\N}\bigr):=g\bigl(y_1,\dotsc,y_d,(x_j)_{j\in\N\setminus\{k_1,\dotsc,k_d\}}\bigr),\quad (x_j)_{j\in\N}\in E=\prod_{j\in\N}E_j,\]
we can write 
\begin{equation*}\label{mehlerfix}
\tilde{G}(y_1,\dotsc,y_d)=\tilde{g}_{(y_1,\dotsc,y_d)}(\X)
\end{equation*}
and, hence, by Proposition \ref{Mehler}, we also have
\begin{equation}\label{mehlerfix}
P_t\bigl(\tilde{G}(y_1,\dotsc,y_d)\bigr)=\E\Bigl[\tilde{g}_{(y_1,\dotsc,y_d)}\bigl(\X(t)\bigr)\,\Big|\,\X\Bigr].
\end{equation}
\smallskip\\

In what follows, for $d\in\N$ and pairwise distinct $k_1,\dotsc,k_d\in\N$, we denote by
\[D^d_{k_1,\dotsc,k_d}:=D_{k_1}\circ D_{k_2}\circ\ldots\circ D_{k_d}\]
the \textit{iterated Malliavin derivative}. Note that the order of the elements $k_1,\dotsc,k_d$ is immaterial, here. 
The following result is useful for bounding certain expressions arising from abstract normal approximation bounds.
\begin{prop}\label{Lmin}
Suppose that, for some $r\in[1,\infty)$, $F\in L^2_\X\cap L^r_\X$. Moreover, let $d\in\N$ and $k_1,\dotsc,k_d\in\N$ be pairwise distinct. Then, $D^d_{k_1,\dotsc,k_d}L^{-1}F\in L^r_\X$ and  $\norm{D^d_{k_1,\dotsc,k_d}L^{-1}F}_r\leq d^{-1}\norm{D^d_{k_1,\dotsc,k_d}F}_r$.
\end{prop}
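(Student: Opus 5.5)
Set $G:=D^d_{k_1,\dotsc,k_d}F$. Iterating Proposition \ref{HoeffD}, $G=\sum_{M\in\Pot_{fin}(\N):\,k_1,\dotsc,k_d\in M}F_M$, so $G$ has exactly the form \eqref{HDG}. Since $F$ may be assumed centered (the constant $\E[F]$ is killed by $D_{k_1}$ and does not affect $L^{-1}$), iterating \eqref{commrel2} gives $D^d_{k_1,\dotsc,k_d}L^{-1}F=L^{-1}G=-\sum_{M\ni k_1,\dotsc,k_d}|M|^{-1}F_M$. By Lemma \ref{pilemma}, $-L^{-1}G=\int_0^\infty P_tG\,dt$ as a Bochner integral in $L^2(\P)$. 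Using the orthogonality bound $\norm{P_tG}_2\le e^{-td}\norm{G}_2$ (every $M$ has $|M|\ge d$) would only give the $L^2$ case. For general $r$ we need an $L^r$ contraction of the form $\norm{P_tG}_r\le e^{-td}\norm{G}_r$; integrating this then yields $\int_0^\infty e^{-td}\,dt=d^{-1}$.

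The key step is therefore the bound $\norm{P_tG}_r\le e^{-td}\norm{G}_r$ for $G$ as in \eqref{HDG}, obtained from \eqref{mf1} and \eqref{mehlerfix}. By \eqref{mf1}, $P_tG=e^{-td}h_t(X_{k_1},\dotsc,X_{k_d})$, where $h_t(y)=\E[\tilde g_{y}(\X(t))\,|\,\X]$. Conditional Jensen then gives, for each fixed $y=(y_1,\dotsc,y_d)$, the pointwise bound $|h_t(y)|^r\le\E[|\tilde g_y(\X(t))|^r\,|\,\X]$. The right-hand side depends only on $(X_j)_{j\notin\{k_1,\dotsc,k_d\}}$ together with the independent $\X'$ and $Z$. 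Its expectation equals $\E|\tilde g_y(\X)|^r=\E|g(y,(X_j)_{j\notin\{k_i\}})|^r$, because $\X(t)$ has law $\mu$ and $\tilde g_y$ does not use the coordinates $k_1,\dotsc,k_d$. Next we substitute $y=(X_{k_1},\dotsc,X_{k_d})$, which is independent of the remaining coordinates and of $(\X',Z)$, and apply Fubini. This gives $\E|h_t(X_{k_1},\dotsc,X_{k_d})|^r\le\E|G|^r$, and hence $\norm{P_tG}_r\le e^{-td}\norm{G}_r$.

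The main obstacle is measurability and justification in this substitution step. We have to choose a jointly measurable version of $(y,\omega)\mapsto h_t(y)$. For instance we can write $h_t(y)(\omega)=\int g\bigl(y,\,(x_j(t))_{j\notin\{k_i\}}\bigr)\,d(\text{law of }\X',Z)$ with $\X$ frozen, which is jointly measurable by Fubini. We also need that this version agrees a.s. with the $L^2$ definition of $P_t(\tilde G(y))$ for $\mu_{k_1}\otimes\dots\otimes\mu_{k_d}$-a.e. $y$, and that $\tilde G(y)\in L^2$ for a.e. $y$; both follow from Fubini since $G\in L^2$. A more direct route uses Proposition \ref{Mehler} for $G$ itself: the Mehler formula applied to $G$ computes the semigroup average while resampling the coordinates $k_i$ as well. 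The trick of freezing the $d$ coordinates is exactly what produces the factor $e^{-td}$ instead of $1$.

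To finish, the Bochner integral $\int_0^\infty P_tG\,dt$ is an $L^2$-limit of Riemann sums, and along a subsequence these converge a.s. Fatou's lemma together with Minkowski's inequality in $L^r$ then gives $\norm{L^{-1}G}_r\le\int_0^\infty\norm{P_tG}_r\,dt\le d^{-1}\norm{G}_r$. If $\norm{G}_r=\infty$ there is nothing to prove. Otherwise $G\in L^r$, because $D_k$ maps $L^r$ into $L^r$, and the same bound gives $D^d_{k_1,\dotsc,k_d}L^{-1}F\in L^r_\X$.
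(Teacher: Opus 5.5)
Your proposal is correct and follows essentially the same route as the paper. Both arguments commute $D^d_{k_1,\dotsc,k_d}$ with $L^{-1}$, write $-L^{-1}G=\int_0^\infty e^{-td}h_t(X_{k_1},\dotsc,X_{k_d})\,dt$ via Lemma \ref{pilemma} and \eqref{mf1}, and bound $\E|h_t(X_{k_1},\dotsc,X_{k_d})|^r\le\E|G|^r$ by freezing the $d$ coordinates and using \eqref{mehlerfix}, conditional Jensen and Fubini. The only differences are that you handle the $t$-integral with Minkowski's inequality (via Riemann sums and Fatou) where the paper applies Jensen to the probability measure $d e^{-td}\,dt$, and that you are more explicit about the measurability issues the paper passes over.
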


\begin{proof}
We use the notation introduced in the last paragraph. Let $F$ have the infinite Hoeffding decomposition \eqref{genhd2}. We first observe that, by Proposition \ref{HoeffD}, the infinite Hoeffding decomposition of
\[G:= D^d_{k_1,\dotsc,k_d}F=\sum_{\substack{M\in \Pot_{fin}(\N):\\k_1,\dotsc,k_d\in M}}F_M\]
is of the form \eqref{HDG}. Moreover, 
from Lemma \ref{pilemma}, \eqref{commrel2} and \eqref{mf1} we have that 
\begin{align}\label{mf2}
D^d_{k_1,\dotsc,k_d}L^{-1}F&=L^{-1} D^d_{k_1,\dotsc,k_d}F =-\int_0^\infty P_t D^d_{k_1,\dotsc,k_d}F dt=-\int_0^\infty P_t G dt\notag\\
& =- \int_0^\infty e^{-td}h_t(X_{k_1},\dotsc,X_{k_d})dt.
\end{align}
Hence, from \eqref{mf2}, using Fubini's theorem and Jensen's inequality twice, we obtain that
\begin{align}\label{mf3}
&\E\babs{D^d_{k_1,\dotsc,k_d}L^{-1}F}^r=d^{-r}\E\Bbabs{\int_0^\infty h_t(X_{k_1},\dotsc,X_{k_d})d e^{-td}dt }^r\notag\\
&\leq d^{-r}\E\Biggl[\int_0^\infty \babs{h_t(X_{k_1},\dotsc,X_{k_d})}^rd e^{-td}dt\Biggr]\notag\\
&=d^{-r}\int_0^\infty\E\Bbabs{\int_{E_{k_1}\times\ldots\times E_{k_d}}P_t\bigl(\tilde{G}(y_1,\dotsc,y_d)\bigr)d\bigotimes_{l=1}^d \mu_{k_l}(y_1,\dotsc,y_d) }^r d e^{-td}dt\notag\\
&\leq d^{-r}\int_0^\infty \int_{E_{k_1}\times\ldots\times E_{k_d}}\E\Babs{ P_t\bigl(\tilde{G}(y_1,\dotsc,y_d)\bigr)}^r d\bigotimes_{l=1}^d \mu_{k_l}(y_1,\dotsc,y_d)d e^{-td}dt.
\end{align}
Now, by \eqref{mehlerfix}, Proposition \ref{Mehler}, the conditional Jensen inequality and since $\X(t)$ has the same distribution as $\X$, for fixed $t\in[0,\infty), y_1\in E_{k_1},\dotsc,y_d\in E_{k_d}$, we have 
\begin{align*}
\E\Babs{ P_t\bigl(\tilde{G}(y_1,\dotsc,y_d)\bigr)}^r& =\E\Babs{\E\Bigl[\tilde{g}_{(y_1,\dotsc,y_d)}\bigl(\X(t)\bigr)\,\Big|\,\X\Bigr]}^r\leq \E\biggl[ \E\Bigl[\babs{\tilde{g}_{(y_1,\dotsc,y_d)}\bigl(\X(t)\bigr)}^r\,\Big|\,\X\Bigr]\biggr]\\
&= \E\babs{\tilde{g}_{(y_1,\dotsc,y_d)}(\X)}^r=\E\Babs{g\bigl((X_j)_{j\in\N\setminus\{k_1,\dotsc,k_d\}},y_1,\dotsc,y_d\bigr)}^r
\end{align*}
so that \eqref{mf3} and Fubini's theorem imply that 
\begin{align*}
&\E\babs{D^d_{k_1,\dotsc,k_d}L^{-1}F}^r\\
&\leq d^{-r}\int_0^\infty \int_{E_{k_1}\times\ldots\times E_{k_d}}\E\Babs{g\bigl((X_j)_{j\in\N\setminus\{k_1,\dotsc,k_d\}},y_1,\dotsc,y_d\bigr)}^r d\bigotimes_{l=1}^d \mu_{k_l}(y_1,\dotsc,y_d)d e^{-td}dt\\
&= d^{-r}\int_0^\infty \E\babs{G}^r d e^{-td}dt=d^{-r}\,\E\babs{ D^d_{k_1,\dotsc,k_d}F}^r,
\end{align*}
as claimed.
\end{proof}

\begin{remark}
Similar moment bounds as in Proposition \ref{Lmin} have been proved in the Poisson \cite[Lemma 3.4]{LPS} and Rademacher situations \cite[Proposition 3.3]{KRT2}. The above proof, however, is rather different from the proofs in those settings, which is mainly due to the essential difference in the commutation relation \eqref{commrel}.
\end{remark}

\subsection{Carr\'{e}-du-champ operator}\label{cdc}
We finally introduce the \textit{carr\'{e}-du-champ operator} $\Gamma$ associated to the Markov generator $L$. We refer to the monograph \cite{BGL14} for a comprehensive study of (diffusive) Markovian generators via their associated carr\'{e}-du-champ operators. 

For $F,G\in\dom(L)$ such that also $FG\in\dom(L)$ this bilinear operator is defined via 
\begin{equation*}
 \Gamma(F,G):=\frac12\Bigl(L(FG)-GLF-FLG\Bigr)\in L^1(\P)
\end{equation*}
so that the symmetry of $L$ and the fact that $L(FG)$ is centered imply the \textit{carr\'{e}-du-champ integration by parts formula}
\begin{equation}\label{cdcintparts}
 \E\bigl[\Gamma(F,G)\bigr]=-\E\bigl[FLG\bigr]=-\E\bigl[GLF\bigr].
\end{equation}
We remark that, contrary to the situation considered in \cite{BGL14}, the operator $L$ associated to $\X$ is \textit{non-diffusive} in the sense that, for a $C^1$-function $\psi$ on $\R$,
\begin{equation*}
 R_\psi(F,G):=\Gamma(\psi(F),G)-\psi'(F)\Gamma(F,G)\not=0
\end{equation*}
in general.
For the purpose of normal approximation it will be important to control the size 
of the error term $R_\psi(F,G)$. To this end, as in \cite{DP18a} for the Poisson and in \cite{DK19} for the Rademacher case,
we provide a new alternative representation of $\Gamma(F,G)$ as well as an alternative integration by parts formula. 
For $F,G\in \dom(D)$ with respective representatives $f,g\in L^2(\mu)$ we therefore define
\begin{equation}\label{Gamma0}
\Gamma_0(F,G):=\frac{1}{2}\sum_{k=1}^\infty\E\biggl[\Bigl(f\bigl(\X^{(k)}\bigr)-f\bigl(\X\bigr)\Bigr) \Bigl(g\bigl(\X^{(k)}\bigr)-g\bigl(\X\bigr)\Bigr)\,\biggl|\,\X\biggr].
\end{equation}
Note that it follows from the estimate
\begin{align*}
&\E \Biggl[\sum_{k=1}^\infty\Bbabs{ \E\biggl[\Bigl(f\bigl(\X^{(k)}\bigr)-f\bigl(\X\bigr)\Bigr) \Bigl(g\bigl(\X^{(k)}\bigr)-g\bigl(\X\bigr)\Bigr)\,\biggl|\,\X\biggr]   }\Biggr]\\
&\leq\sum_{k=1}^\infty\E\biggl[\Babs{f\bigl(\X^{(k)}\bigr)-f\bigl(\X\bigr)}\Babs{g\bigl(\X^{(k)}\bigr)-g\bigl(\X\bigr)}\biggr]\\
&\leq\biggl(\sum_{k=1}^\infty\E\Bigl[\Bigl(f\bigl(\X^{(k)}\bigr)-f\bigl(\X\bigr)\Bigr)^2 \Bigr]\biggr)^{1/2}
\biggl(\sum_{k=1}^\infty\E\Bigl[\Bigl(g\bigl(\X^{(k)}\bigr)-g\bigl(\X\bigr)\Bigr)^2 \Bigr]\biggr)^{1/2}
\end{align*}
and from Proposition \ref{domD} that $\Gamma_0(F,G)$ is a well-defined element of $L^1(\P)$ for all $F,G\in\dom(D)$ and that also 
\begin{equation*}
\Gamma_0(F,G)=\frac{1}{2}\E\biggl[\sum_{k=1}^\infty\Bigl(f\bigl(\X^{(k)}\bigr)-f\bigl(\X\bigr)\Bigr) \Bigl(g\bigl(\X^{(k)}\bigr)-g\bigl(\X\bigr)\Bigr)\,\biggl|\,\X\biggr]
\end{equation*}
in this case. 

The following important result shows that $\Gamma_0$ coincides with $\Gamma$, whenever the latter is defined.

\begin{prop}\label{formgamma}
Let $F,G\in\dom(L)$ be such that $FG\in\dom(L)$ as well. 
Then, $\Gamma_0(F,G)\in L^1(\P)$ is well-defined and, in fact, $\Gamma(F,G)=\Gamma_0(F,G)$. 
\end{prop}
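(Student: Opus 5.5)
Well-definedness of $\Gamma_0(F,G)$ in $L^1(\P)$ follows from the estimate displayed just before the statement, since $\dom(L)\subseteq\dom(D)$ by Remark \ref{domrem}(b). The plan is to compute both sides term by term in $k$ and then sum. We use $LH=-\sum_{k=1}^\infty D_kH$ in $L^2(\P)$ for $H\in\{F,G,FG\}$, which holds by Proposition \ref{domL}. This gives
\[
2\Gamma(F,G)=\sum_{k=1}^\infty\Bigl(-D_k(FG)+G\,D_kF+F\,D_kG\Bigr),
\]
where the right-hand side is an $L^1(\P)$-limit of partial sums. Indeed, $\sum_{k\le m}D_k(FG)$ converges in $L^2$, while $G\sum_{k\le m}D_kF$ and $F\sum_{k\le m}D_kG$ converge in $L^1$ by Cauchy--Schwarz.

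The key step is a pointwise identity for each fixed $k$. Write $D_kH=\E[h(\X)-h(\X^{(k)})\,|\,\X]$ and note that $F$ and $G$ are $\X$-measurable, so they can be pulled inside the conditional expectation. With $f,g$ the representatives, this gives
\begin{align*}
&-D_k(FG)+G\,D_kF+F\,D_kG\\
&=\E\Bigl[-f(\X)g(\X)+f(\X^{(k)})g(\X^{(k)})+g(\X)\bigl(f(\X)-f(\X^{(k)})\bigr)+f(\X)\bigl(g(\X)-g(\X^{(k)})\bigr)\,\Big|\,\X\Bigr]\\
&=\E\Bigl[\bigl(f(\X^{(k)})-f(\X)\bigr)\bigl(g(\X^{(k)})-g(\X)\bigr)\,\Big|\,\X\Bigr].
\end{align*}
This is a purely algebraic expansion. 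Integrability of each term is fine because $f(\X^{(k)})g(\X^{(k)})$ is in $L^1$ (it has the same law as $FG$), and all the other products are in $L^1$ by Cauchy--Schwarz.

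Summing over $k\le m$, the partial sums of $2\Gamma(F,G)$ coincide with the partial sums of the series defining $2\Gamma_0(F,G)$ in \eqref{Gamma0}. The latter converge absolutely in $L^1(\P)$ by the pre-statement estimate and Proposition \ref{domD}(iii). Since both series converge in $L^1$ and have equal partial sums, their limits agree a.s., so $\Gamma(F,G)=\Gamma_0(F,G)$.

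The main obstacle is the limit bookkeeping: one has to justify that the decomposition of $L(FG)-GLF-FLG$ into $k$-wise terms is legitimate in $L^1$. The only subtle point is that the products $G\cdot\sum_{k\le m}D_kF$ converge in $L^1$, and the Cauchy--Schwarz argument above handles it. The per-$k$ identity itself is routine.
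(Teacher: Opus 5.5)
Your proof is correct and follows the same route as the paper: you expand $L(FG)-GLF-FLG$ via $LH=-\sum_k D_kH$, apply the per-$k$ algebraic identity inside the conditional expectation given $\X$, and sum over $k$. Your Cauchy--Schwarz argument for the $L^1$-convergence of $G\sum_{k\le m}D_kF$ and $F\sum_{k\le m}D_kG$ makes explicit the step that the paper only records as ``with convergence in $L^1_\X$''.
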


\begin{proof}
Since $\dom(L)\subseteq\dom(D)$ by Propositions \ref{domD} and \ref{domL} it is clear that $\Gamma_0(F,G)\in L^1(\P)$ is well-defined. As before, let $f,g\in L^2(\mu)$ be representatives of $F$ and $G$, respectively. By definition, we have (with convergence in $L^1_\X$)
\begin{align*}
2\Gamma(F,G)&=L(FG)-GLF-FLG=\sum_{k=1}^\infty\Bigl(-D_k(FG)+GD_kF+FD_kG\Bigr)\\
&=\sum_{k=1}^\infty\biggl(-\E\bigl[(fg)(\X)-(fg)(\X^{(k)})\,\bigl|\,\X\bigr]+g(\X)\E\bigl[f(\X)-f(\X^{(k)})\,\bigl|\,\X\bigr]\\
&\hspace{3cm}+f(\X)\E\bigl[g(\X)-g(\X^{(k)})\,\bigl|\,\X\bigr]\biggr)\\
&=\sum_{k=1}^\infty\E\biggl[\Bigl(f\bigl(\X^{(k)}\bigr)-f\bigl(\X\bigr)\Bigr) \Bigl(g\bigl(\X^{(k)}\bigr)-g\bigl(\X\bigr)\Bigr)\,\biggl|\,\X\biggr]=2\Gamma_0(F,G),
\end{align*}
where we have used the fact that $fg$ is a representative of $FG$. 
\end{proof}

\begin{prop}[Carr\'{e}-du-champ integration-by-parts]\label{intpartsgamma0}
 Suppose that $F\in\dom(D)$ and $G\in\dom(L)$. Then, 
 \[\E\bigl[FLG\bigr] = -\E\bigl[\Gamma_0(F,G)\bigr].\]
\end{prop}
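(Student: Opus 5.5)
The plan is to reduce both sides to the bilinear form $\langle DF,DG\rangle_{L^2(\kappa\otimes\P_\F)}=\sum_{k=1}^\infty\E[D_kF\,D_kG]$. On the left, Proposition \ref{domL} gives $LG=-\sum_{k=1}^\infty D_kG$ with convergence in $L^2_\X$. Since $F\in L^2_\X$, we may interchange expectation and the $L^2$-limit to get
\[\E\bigl[FLG\bigr]=-\lim_{m\to\infty}\sum_{k=1}^m\E\bigl[F D_kG\bigr]=-\sum_{k=1}^\infty\E\bigl[D_kF\,D_kG\bigr].\]
The last step uses the symmetry of $D_k$ and $D_kD_k=D_k$: indeed $\E[FD_kG]=\E[FD_kD_kG]=\E[D_kF\,D_kG]$. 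The resulting series converges absolutely by Cauchy--Schwarz, because $F,G\in\dom(D)$ (here $\dom(L)\subseteq\dom(D)$) and Proposition \ref{domD}(ii) applies.

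On the right, the estimate preceding Proposition \ref{formgamma} shows that the series defining $\Gamma_0(F,G)$ converges in $L^1(\P)$, with absolutely summable $L^1$-norms. Hence we can take expectations termwise:
\[\E\bigl[\Gamma_0(F,G)\bigr]=\frac12\sum_{k=1}^\infty\E\Bigl[\bigl(f(\X^{(k)})-f(\X)\bigr)\bigl(g(\X^{(k)})-g(\X)\bigr)\Bigr].\]
It then suffices to show, for each fixed $k$, that
\[\E\Bigl[\bigl(f(\X^{(k)})-f(\X)\bigr)\bigl(g(\X^{(k)})-g(\X)\bigr)\Bigr]=2\E\bigl[D_kF\,D_kG\bigr].\]
This is the polarized form of \eqref{eqdomd}. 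The argument repeats that computation: expand the product, use exchangeability of $(\X,\X^{(k)})$ to identify $\E[f(\X^{(k)})g(\X^{(k)})]=\E[FG]$ and $\E[f(\X^{(k)})g(\X)]=\E[f(\X)g(\X^{(k)})]$, and use $\E[f(\X^{(k)})\,|\,\X]=\E[F\,|\,\G_k]$. This gives
\[2\E[FG]-2\E\bigl[G\,\E[F\,|\,\G_k]\bigr]=2\E[GD_kF]=2\E[D_kF\,D_kG].\]
Alternatively, one can apply \eqref{eqdomd} to $F+G$ and $F-G$ and take the difference.

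Combining the two displays yields $\E[FLG]=-\E[\Gamma_0(F,G)]$. The only delicate points are justifying the two interchanges of infinite sums with expectations. The first is handled by the $L^2$-convergence from Proposition \ref{domL}; the second by the $L^1$ bound already established for $\Gamma_0$. Neither should be a serious obstacle. Note that no assumption $FG\in\dom(L)$ is needed, which is the point of this proposition compared with \eqref{cdcintparts}.
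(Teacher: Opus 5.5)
Your proof is correct and follows essentially the paper's route. Both arguments use the $L^2$-convergence $LG=-\sum_k D_kG$ to write $\E[FLG]=-\sum_k\E[FD_kG]$, identify each term with $-\frac12\E\bigl[(f(\X^{(k)})-f(\X))(g(\X^{(k)})-g(\X))\bigr]$ via exchangeability of $(\X,\X^{(k)})$, and justify the final interchange of sum and expectation by $F,G\in\dom(D)$. The only cosmetic difference is that you pass through $\E[D_kF\,D_kG]$, i.e.\ the polarized form of \eqref{eqdomd} (which the paper records later as \eqref{ES}), whereas the paper symmetrizes $\E\bigl[f(\X)\bigl(g(\X)-g(\X^{(k)})\bigr)\bigr]$ directly.
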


\begin{proof}
Let $f,g\in L^2(\mu)$ be representatives of $F$ and $G$, respectively. Since $G\in\dom(L)$, the series $-\sum_{k=1}^\infty D_kG$ converges in $L^2(\P)$ to $LG$ and we have
 \begin{align*}
  &\E\bigl[FLG\bigr]=-\sum_{k=1}^\infty\E\bigl[FD_kG\bigr]
  =-\sum_{k=1}^\infty\E\Bigl[f(\X)\E\bigl[g(\X)-g(\X^{(k)})\,\bigl|\,\X\bigr]\Bigr]\\
  &=-\sum_{k=1}^\infty\E\Bigl[\E\bigl[f(\X)\bigl( g(\X)-g(\X^{(k)})\bigr)\,\bigl|\,\X\bigr]\Bigr]
  =-\sum_{k=1}^\infty\E\Bigl[f(\X)\bigl( g(\X)-g(\X^{(k)})\bigr)\Bigr].
 \end{align*}
Now, for each $k\in\N$, $(\X,\X^{(k)})$ has the same distribution as $(\X^{(k)},\X)$ so that we also have 
\begin{align*}
  \E\bigl[FLG\bigr]&=\sum_{k=1}^\infty\E\Bigl[f(\X^{(k)})\bigl( g(\X)-g(\X^{(k)})\bigr)\Bigr]
\end{align*}
implying
\begin{align*}
 &\E\bigl[FLG\bigr]=-\frac12\sum_{k=1}^\infty\E\Bigl[\bigl(f(\X)-f(\X^{(k)})\bigr)\bigl( g(\X)-g(\X^{(k)})\bigr)\Bigr]  \\
&= -\frac12\sum_{k=1}^\infty\E\Bigl[\E\bigl[\bigl(f(\X)-f(\X^{(k)})\bigr)\bigl( g(\X)-g(\X^{(k)})\bigr)\,\bigl|\,\X\bigr]\Bigr]  
 =-\E\bigl[\Gamma_0(F,G)\bigr].
\end{align*}
Note that for the last identity we have used the fact that $F,G\in\dom(D)$ in order to interchange the expectation and the infinite sum. 
\end{proof}

The carr\'{e}-du-champ operator $\Gamma$ in general is closely related to the associated \textit{Dirichlet form} $\mathcal{E}$ (see e.g. \cite[Section 1.7]{BGL14}). We further refer to \cite{BH} for a comprehensive treatment of (abstract and concrete) Dirichlet forms. For $F,G\in\dom(L)$ this symmetric bilinear form is usually defined by 
\begin{equation*}
\mathcal{E}(F,G):=-\E\bigl[FLG\bigr]=-\E\bigl[GLF\bigr],
\end{equation*}
where we have used the symmetry of $L$. This formula is also provided in \cite[Section 4]{DH}. If, additionally, $FG\in\dom(L)$, then by \eqref{cdcintparts} it also holds that 
\begin{equation*}
\mathcal{E}(F,G)=\E\bigl[\Gamma(F,G)\bigr].
\end{equation*} 
Now, since $\Gamma_0$ extends $\Gamma$, and thanks to Proposition \ref{intpartsgamma0} we can extend the definition of $\mathcal{E}$ by letting 
\begin{equation*}
\mathcal{E}(F,G):=\E\bigl[\Gamma_0(F,G)\bigr]=\frac{1}{2}\sum_{k=1}^\infty\E\biggl[\Bigl(f\bigl(\X^{(k)}\bigr)-f\bigl(\X\bigr)\Bigr) \Bigl(g\bigl(\X^{(k)}\bigr)-g\bigl(\X\bigr)\Bigr)\biggr]
\end{equation*} 
for all $F,G\in\dom(D)\supseteq\dom(L)$. Note that, in \cite[Definition 4.1]{DH}, the alternative definition 
\begin{equation*}
\mathcal{E}(F,G)=\langle DF,DG\rangle_{L^2(\kappa\otimes\P_\F)}=\sum_{k=1}^\infty \E\bigl[D_kF D_kG\bigr], \quad F,G\in\dom(D),
\end{equation*}
has been given. However, since a straightforward generalization of the computation leading to \eqref{eqdomd} shows that
\begin{align}\label{ES}
\E\bigl[D_kF D_kG\bigr]&= \frac12\E\biggl[\Bigl(f\bigl(\X^{(k)}\bigr)-f\bigl(\X\bigr)\Bigr) \Bigl(g\bigl(\X^{(k)}\bigr)-g\bigl(\X\bigr)\Bigr)\biggr],\quad k\in\N,
\end{align}
 these two definitions actually coincide. Moreover, in \cite[Corollary 3.5]{DH} the important \textit{Poincar\'{e} inequality} 
\begin{equation}\label{poincare}
\Var(F)\leq \mathcal{E}(F,F)=\sum_{k=1}^\infty\E\bigl[(D_kF)^2\bigr], \quad F\in\dom(D),
\end{equation}
has been provided, which, thanks to \eqref{ES}, may thus now also be written as 
\begin{equation*}
\Var(F)\leq \frac{1}{2}\sum_{k=1}^\infty\E\biggl[\Bigl(f\bigl(\X^{(k)}\bigr)-f\bigl(\X\bigr)\Bigr)^2 \biggr], \quad F\in\dom(D).
\end{equation*}
As has been remarked in \cite{DH}, this inequality is an infinite version of the celebrated \textit{Efron-Stein inequality} (see e.g. \cite{EfSt,Steele,BLM}). It further trivially continues to hold for all $F\in L^2_\X\setminus \dom(D)$ since, in this case, the right hand side equals $+\infty$ by Proposition \ref{domD}. The inequality \eqref{poincare} has actually implicitly been established in the proof of Proposition \ref{domD}, since therein we have seen that, for $F\in\dom(D)$ with the infinite Hoeffding decomposition \eqref{genhd2}, one has 
\begin{align}\label{devpoincare}
\sum_{k=1}^\infty\E\bigl[(D_kF)^2\bigr]=\sum_{M\in\Pot_{fin}(\N)}|M|\,\E\bigl[F_M^2\bigr] 
\end{align}
which is of course not smaller than 
\[\sum_{\emptyset\not=M\in\Pot_{fin}(\N)}\,\E\bigl[F_M^2\bigr]=\Var(F).\]
Observe that our derivation also indicates how far the inequality is from being an equality. Note further that, for $F\in\bigoplus_{p=0}^m\mathcal{H}_p$, from \eqref{poincare} and \eqref{devpoincare} we obtain the chain of inequalities 
\begin{align}\label{poincare2}
\Var(F)\leq\sum_{k=1}^\infty\E\bigl[(D_kF)^2\bigr]\leq m\Var(F).
\end{align}
Thus, for $F$ living in the sum of the first $m$ chaoses associated to $\X$, the variance and its upper bound via the Poincar\'{e} inequality coincide up to a constant depending on $m$.
\subsection{Covariance formulae}\label{covid}
In order to adapt Stein's method of distributional approximation to the abstract framework, it is important that several useful covariance identities can be established, leading in turn to fruitful new \textit{Stein identities} (see e.g. \cite{CGS}). In this subsection we provide three such formulae.

The first one, the Malliavin type covariance formula, has implicitly been used in \cite{DH} and a very similar formula has further been provided in \cite[Lemma 2.5]{Duer}. We state it here for the sake of later reference and also provide its short proof.
\begin{prop}\label{mallcovid}
For all $F\in L^2_\X$ and $G\in\dom(D)$ one has
\begin{equation*}
\Cov(F,G)=-\E\Bigl[\bigl\langle D L^{-1}\bigl(F-\E[F]\bigr),DG\bigr\rangle_{L^2(\kappa)}\Bigr].
\end{equation*}
\end{prop}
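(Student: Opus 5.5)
The plan is to reduce to the centered case and then combine the pseudo-inverse with the Malliavin integration by parts formula \eqref{intpartsM}. Set $\tilde F:=F-\E[F]$, which is centered and in $L^2_\X$. Then $\Cov(F,G)=\E[\tilde F G]$. By Remark \ref{saLrem} (b), $U:=L^{-1}\tilde F\in\dom(L)$ and $LU=\tilde F$. Since $\dom(L)\subseteq\dom(D)$, the process $DU=(D_kU)_{k\in\N}$ lies in $L^2(\kappa\otimes\P_\F)$. By the definition of $\dom(L)$, this process also belongs to $\dom(\delta)$, with $\delta DU=-LU=-\tilde F$.

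Next I apply \eqref{intpartsM} with the random variable $G\in\dom(D)$ and the process $DU\in\dom(\delta)$. This gives
\[
\langle DG,DU\rangle_{L^2(\kappa\otimes\P_\F)}=\E\bigl[G\,\delta DU\bigr]=-\E[G\tilde F]=-\Cov(F,G).
\]
By Lemma \ref{domDle}, both $DG$ and $DU$ are given componentwise by $D_k$. Since both processes are square integrable with respect to $\kappa\otimes\P_\F$, Fubini (via Cauchy--Schwarz) gives
\[
\langle DG,DU\rangle_{L^2(\kappa\otimes\P_\F)}=\E\Bigl[\sum_{k}D_kU\,D_kG\Bigr]=\E\bigl[\langle DL^{-1}\tilde F,DG\rangle_{L^2(\kappa)}\bigr].
\]
This is exactly the claimed formula. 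The pointwise inner product is in $L^1(\P)$, which justifies interchanging the expectation and the sum.

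There is little real obstacle here. The only delicate point is membership: we need $L^{-1}\tilde F\in\dom(L)$ rather than just $\dom(D)$, so that \eqref{intpartsM} applies, and this is guaranteed by Remark \ref{saLrem} (b).

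As a sanity check, and as an alternative proof, I would also verify the identity through the Hoeffding decomposition. By Proposition \ref{HoeffD} and orthogonality, the left side of the inner product identity equals
\[
\sum_k\sum_{M\ni k}\Bigl(-\tfrac{1}{|M|}\Bigr)\E[F_MG_M]=-\sum_{M\neq\emptyset}\E[F_MG_M]=-\Cov(F,G),
\]
where absolute convergence follows from $G\in\dom(D)$ via Proposition \ref{domD}.
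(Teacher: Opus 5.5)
Your proof is correct and is essentially the paper's argument: write $\tilde F=LL^{-1}\tilde F=-\delta DL^{-1}\tilde F$ using $L^{-1}\tilde F\in\dom(L)$, then apply the integration-by-parts formula \eqref{intpartsM} with $G\in\dom(D)$. Your Cauchy--Schwarz/Fubini justification for moving the expectation inside the pointwise inner product, and the Hoeffding-decomposition cross-check, are valid additions that the paper does not spell out.
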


\begin{proof}
Since $L^{-1}(F-\E[F])\in\dom(L)\subseteq\dom(D)$, using $\delta D=-L$ and \eqref{intpartsM} we have
\begin{align*}
\Cov(F,G)&=\E\bigl[(F-\E[F]) G\bigr]=\E\bigl[LL^{-1}(F-\E[F]) G\bigr]\\
&=-\E\bigl[\delta DL^{-1}((F-\E[F]) G\bigr]
=-\E\Bigl[\bigl\langle D L^{-1}\bigl(F-\E[F]\bigr),DG\bigr\rangle_{L^2(\kappa)}\Bigr].
\end{align*}
\end{proof}

The first equality in the next covariance formula, which is of the Clark-Ocone type, is Theorem 3.6 in \cite{DH}.
\begin{prop}\label{clarkcovid}
For all $F,G\in \dom(D)$ one has
\begin{align*}
\Cov(F,G)&=\E\Biggl[\sum_{k=1}^\infty D_k\E[F|\F_k]\, D_kG  \Biggr]=\sum_{k=1}^\infty \E\Bigl[ D_k\E[F|\F_k]\, D_kG  \Bigr] \\
&=\sum_{k=1}^\infty \E\Bigl[ \E[D_kF|\F_k]\,\E[D_kG|\F_k]  \Bigr]
\end{align*}
\end{prop}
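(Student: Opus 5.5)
The whole argument will run through the infinite Hoeffding decomposition. Write $F=\sum_{M}F_M$ and $G=\sum_M G_M$ as in \eqref{genhd2}. By orthogonality,
\[
\Cov(F,G)=\sum_{\emptyset\neq M}\E[F_MG_M].
\]

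The first step is to compute the Hoeffding decomposition of $D_k\E[F|\F_k]$. Since the Hoeffding components of $\E[F|\F_k]$ are exactly $F_M$ for $M\subseteq[k]$ (by \eqref{hdform}, or by interchanging the conditional expectation with the $L^2$ series and using property (b)), Proposition \ref{HoeffD} gives
\[
D_k\E[F|\F_k]=\sum_{M\subseteq[k],\,k\in M}F_M=\sum_{M:\,\max M=k}F_M .
\]
Also, $\E[D_kF|\F_k]=D_k\E[F|\F_k]$ by the commutation rule $D_k\E[\cdot|\F_M]=\E[D_k\cdot|\F_M]$ stated before Lemma \ref{domDle}. For $G$, Proposition \ref{HoeffD} gives $D_kG=\sum_{N\ni k}G_N$.

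Next, for fixed $k$, orthogonality yields
\[
\E\bigl[D_k\E[F|\F_k]\,D_kG\bigr]=\sum_{\max M=k}\E[F_MG_M].
\]
Since $\E[D_kG|\F_k]=\sum_{\max N=k}G_N$, the same computation gives $\E\bigl[\E[D_kF|\F_k]\,\E[D_kG|\F_k]\bigr]$, which establishes the equality of the second and third expressions term by term. Summing over $k$, each nonempty finite $M$ appears exactly once, namely for $k=\max M$. The series $\sum_M\E[F_MG_M]$ converges absolutely by Cauchy--Schwarz, since $\sum_M\E[F_M^2]$ and $\sum_M\E[G_M^2]$ are finite. Hence the sum over $k$ of the terms equals $\Cov(F,G)$. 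This proves the second and third equalities and only needs $F,G\in L^2_\X$.

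The remaining step is the first equality, i.e.\ interchanging $\E$ with $\sum_k$. I expect this to be the only real obstacle, and it is where $F,G\in\dom(D)$ is used. I will show that $\sum_k\E\bigl|D_k\E[F|\F_k]\,D_kG\bigr|<\infty$. By Cauchy--Schwarz this is at most
\[
\Bigl(\sum_k\E\bigl[(D_k\E[F|\F_k])^2\bigr]\Bigr)^{1/2}\Bigl(\sum_k\E[(D_kG)^2]\Bigr)^{1/2}.
\]
The first factor equals $\bigl(\sum_{M\neq\emptyset}\E[F_M^2]\bigr)^{1/2}=\sqrt{\Var(F)}$, and the second is finite by Proposition \ref{domD}(ii). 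Therefore $\sum_k D_k\E[F|\F_k]\,D_kG$ converges absolutely in $L^1(\P)$, and by dominated convergence its expectation equals the sum of expectations. In fact this shows the formula holds for $F\in L^2_\X$ and $G\in\dom(D)$.
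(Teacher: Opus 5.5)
Your proof is correct, but it takes a different route from the paper. The paper does not prove the covariance identity itself: it takes $\Cov(F,G)=\E\bigl[\sum_k D_k\E[F|\F_k]\,D_kG\bigr]$ from \cite[Theorem 3.6]{DH}. It then justifies interchanging $\E$ and $\sum_k$ through the conditional Jensen bound \eqref{clarkex}, namely $\sum_k\E[(\E[D_kF|\F_k])^2]\le\sum_k\E[(D_kF)^2]<\infty$, which uses $F\in\dom(D)$. The last equality comes from $D_k\E[F|\F_k]=\E[D_kF|\F_k]$ together with the tower property.

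You instead derive everything from the infinite Hoeffding decomposition. Your identification $D_k\E[F|\F_k]=\sum_{\max M=k}F_M$ is exactly the martingale increment $\E[F|\F_k]-\E[F|\F_{k-1}]$, which makes the Clark--Ocone structure explicit. It also shows that each nonempty $M$ is counted exactly once, so the covariance formula reduces to regrouping the absolutely convergent series $\sum_{M\neq\emptyset}\E[F_MG_M]$.

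Your route has three advantages. It is self-contained within the paper's own framework. It gives the exact identity $\sum_k\E[(D_k\E[F|\F_k])^2]=\Var(F)$ instead of the Jensen upper bound, so \eqref{varid} holds for every $F\in L^2_\X$. Consequently the hypothesis weakens to $F\in L^2_\X$, $G\in\dom(D)$, and the series-of-expectations form even holds for all $F,G\in L^2_\X$.

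The paper's argument is shorter only because it outsources the main identity to \cite{DH}. For the third equality, the tower-property argument is a bit more direct than computing both sides, though your term-by-term computation is equally valid.
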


Note that the second equality in Proposition \ref{clarkcovid} holds true as $F,G\in\dom(D)$ and, thus,  
\begin{align}\label{clarkex}
\sum_{k=1}^\infty \E\Bigl[\bigl( D_k\E[F|\F_k]\bigr)^2\Bigr]&=\sum_{k=1}^\infty \E\Bigl[\bigl(\E[ D_kF|\F_k]\bigr)^2\Bigr]
\leq\sum_{k=1}^\infty \E\Bigl[\E\bigl[( D_kF)^2|\F_k\bigr]\Bigr]\notag\\
&=\sum_{k=1}^\infty \E\bigl[( D_kF)^2\bigr]<\infty,
\end{align}
where the final inequality is by Proposition \ref{domD}. Hence, by the Cauchy-Schwarz inequality, it is justified to interchange the expectation and the infinite sum. Moreover, the third equality follows from the fact that $D_k\E[F|\F_k]=\E[D_kF|\F_k]$ for any $k\in\N$ and any $F\in L^2_\X$. In particular, for any $F\in \dom(D)$, one has the identity
\begin{align}\label{varid} 
\Var(F)&=\sum_{k=1}^\infty \E\Bigl[ \bigl(\E[D_kF|\F_k]\bigr)^2\Bigr]
\end{align}
which, by an application of the conditional Jensen inequality, again yields \eqref{poincare}.

Finally, we present a novel covariance formula that makes use of the carr\'{e}-du-champ integration by parts formula in Proposition \ref{intpartsgamma0}. It can be considered an infinite analogue of the covariance formula from \cite[Lemma 2.5]{D23} in the context of non-linear exchangeable pairs.
\begin{prop}\label{cdccovid}
For all $F\in L^2_\X$ and $G\in \dom(D)$ one has
\begin{equation*}
\Cov(F,G)=\E\biggl[\Gamma_0\Bigl(-L^{-1}\bigl(F-\E[F]\bigr),G\Bigr)\biggr].
\end{equation*}
\end{prop}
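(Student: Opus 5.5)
The plan is to reduce the claim to the carr\'{e}-du-champ integration by parts formula of Proposition \ref{intpartsgamma0}, applied with the roles suitably chosen. Set $H:=-L^{-1}\bigl(F-\E[F]\bigr)$. By Remark \ref{saLrem} (b), $H\in\dom(L)$ and $LH=-(F-\E[F])$. By Propositions \ref{domD} and \ref{domL}, $\dom(L)\subseteq\dom(D)$, so $H\in\dom(D)$. Since also $G\in\dom(D)$, the expression $\Gamma_0(H,G)$ is a well-defined element of $L^1(\P)$ by the discussion following \eqref{Gamma0}. In particular, the right hand side of the claimed identity makes sense.

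Next, write the covariance as
\[
\Cov(F,G)=\E\bigl[(F-\E[F])\,G\bigr]=-\E\bigl[G\,LH\bigr].
\]
Now apply Proposition \ref{intpartsgamma0} with the pair $(G,H)$ in place of $(F,G)$. Its hypotheses hold because $G\in\dom(D)$ and $H\in\dom(L)$. This yields $\E[G\,LH]=-\E[\Gamma_0(G,H)]$, and therefore
\[
\Cov(F,G)=\E\bigl[\Gamma_0(G,H)\bigr].
\]

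Finally, $\Gamma_0$ is symmetric in its two arguments, as is evident from the definition \eqref{Gamma0}. Hence $\Gamma_0(G,H)=\Gamma_0(H,G)$, which is exactly the asserted formula.

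The only point requiring care is that we never need $HG\in\dom(L)$. This is precisely why $\Gamma_0$ is used instead of $\Gamma$, and why Proposition \ref{intpartsgamma0} (rather than \eqref{cdcintparts}) is the right tool. Beyond that, no real obstacle arises: the domain memberships of $H$ and $G$ are the whole content of the verification.
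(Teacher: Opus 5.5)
Your proposal is correct and matches the paper's proof: both write $\Cov(F,G)=\E\bigl[LL^{-1}(F-\E[F])\,G\bigr]$ and apply Proposition \ref{intpartsgamma0} with $G\in\dom(D)$ in the first slot and $L^{-1}(F-\E[F])\in\dom(L)$ in the second. You also state explicitly the symmetry of $\Gamma_0$, which the paper uses without comment when it swaps the two arguments.
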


\begin{proof}
Since $G\in \dom(D)$ and $L^{-1}(F-\E[F])\in \dom(L)$, by Proposition \ref{intpartsgamma0} we have
\begin{align*}
\Cov(F,G)&=\E\bigl[(F-\E[F]) G\bigr]=\E\bigl[LL^{-1}(F-\E[F]) G\bigr]\\
&=-\E\Bigl[\Gamma_0\bigl(L^{-1}\bigl(F-\E[F]\bigr),G\bigr)\Bigr]=\E\Bigl[\Gamma_0\bigl(-L^{-1}\bigl(F-\E[F]\bigr),G\bigr)\Bigr].
\end{align*}
\end{proof}

\begin{remark}
At the end of this section on Malliavin calculus and infinite Hoeffding decompositions we make a final comment regarding the generality of the theory initiated in \cite{DH,Duer} and extended in the present work. Suppose that $I$ is in fact an uncountable index set and that, for each $i\in I$, $(E_i,\B_i,\mu_i)$ is a probability space. Denote by 
$(E,\B,\mu):=(\prod_{i\in I} E_i,\bigotimes_{i\in I}\B_i,\bigotimes_{i\in I}\mu_i)$ the corresponding product probability space and by $Y_L:(E,\B)\rightarrow (\prod_{i\in L}E_i,\bigotimes_{i\in L}\B_i)$ the canonical projections, $L\subseteq I$. From the well-known fact that any $B\in\B$ is already contained in 
$\sigma(Y_{J_B})$ for some countable $J_B\subseteq I$ and since the Borel-$\sigma$-field $\B(\R)$ on $\R$ is countably generated, 
one can infer that any measurable $f:(E,\B)\rightarrow(\R,\B(\R))$ is actually measurable with respect to $\sigma(Y_{K})$ for some countable $K\subseteq I$. Hence, as long as only countably many such functionals $f$ are considered, the theory presented here in principle covers the most general case of functionals on products of arbitrarily many probability spaces.
\end{remark}

\section{An infinite quantitative de Jong theorem}\label{dejong}
In this section we prove an infinite version of the quantitative de Jong CLTs that have recently been provided in \cite{DP17,Doe23b}. We stress that already the statement of the following result relies on the concept of infinite Hoeffding decompositions as outlined in Subsection \ref{Hoeffding} and could hence not be given in the classical setup of functionals of finitely many independent random variables. We make use of the framework and the notation introduced in Section \ref{malliavin}.

For $k\in\N$, the \textit{influence} of the variable $X_k$ on $F\in L^2_\X$ is customarily defined as $\Inf_k(F):=\E[\Var(F|\G_k)]$. From the results in Subsection \ref{mallop} it follows that, in terms of the decomposition \eqref{genhd2}, one has the alternative formula
\begin{align*}
\Inf_k(F)=\sum_{\substack{M\subseteq\N:\\ |M|<\infty,\, k\in M}} \E[F_M^2]=\sum_{\substack{M\subseteq\N:\\ |M|<\infty,\, k\in M}} \Var(F_M)=\E\bigl[(D_kF)^2\bigr].
\end{align*}
We further define the \textit{maximal influence} of any single variable $X_k$, $k\in\N$, on $F$ as 
\begin{align*}
\rho^2(F):=\sup_{k\in\N}\Inf_k(F)=\sup_{k\in\N}\sum_{\substack{M\subseteq\N:\\ |M|<\infty,\, k\in M}} \E[F_M^2]
\end{align*}
and let $\rho(F):=\sqrt{\rho^2(F)}$.
In general, one speaks of a \textit{low influence functional}, if $\rho^2(F)$ is small compraed to its variance, see e.g. \cite{MOO10}. As has been demonstrated in \cite{MOO10, NPR2}, the quantity $\rho^2(F)$ plays a fundamental role for the universality of multilinear polynomial forms in independent random variables. Moreover, see again \cite{MOO10}, many important current problems in social choice theory and theoretical computer science are only stated for the low influence case in order to exclude pathological and therefore unimportant counterexamples. 

From now on we additionally suppose that, for some $p\in\N$, $F\in\mathcal{H}_{p,\X}$ is a completely degenerate $U$-statistic of order $p$ based on $\X$ as defined in Subsection \ref{Hoeffding}, that is, one has $F_M=0$ $\P$-a.s. for all finite subsets $M$ of $\N$ with $|M|\not=p$.

\begin{theorem}[Infinite quantitative de Jong CLT]\label{infdejong}
With the above notation fixed, suppose that, for some $p\in\N$, $F\in L^4(\P)$ is a completely degenerate $U$-statistic of order $p$ based on $\X$ such that $\E[F^2]=\Var(F)=1$. Then, for $Z\sim N(0,1)$ we have the bounds 
\begin{align}
d_\W(F,Z)&\leq \Biggl(\sqrt{\frac{2}{\pi}}+\frac{4}{3}\Biggr)\sqrt{\babs{\E[F^4]-3}}+\sqrt{\kappa_p}\Biggl(\sqrt{\frac{2}{\pi}}+ 
\frac{2\sqrt{2}}{\sqrt{3}}\Biggr)\rho(F),\quad\text{and}  \label{djwass}\\
d_\K(F,Z)&\leq  11.9\sqrt{\babs{\E[F^4]-3}} +\bigl(3.5+10.8\sqrt{\kappa_p}\bigr)\rho(F).\label{djkol}
\end{align}
Here, $\kappa_p\in(0,\infty)$ is a combinatorial constant that only depends on $p$.
\end{theorem}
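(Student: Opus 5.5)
The plan is to reduce the infinite statement to the finite quantitative de Jong theorem of \cite{DP17,Doe23b} by truncation, and then pass to the limit. For $n\geq p$ let $F_n:=\E[F\,|\,\F_n]=\sum_{M\subseteq[n],|M|=p}F_M$. This is a completely degenerate $U$-statistic of order $p$ in the finitely many independent variables $X_1,\dotsc,X_n$. The finite results give bounds for $\tilde F_n:=F_n/\sqrt{\Var(F_n)}$ of exactly the form \eqref{djwass}, \eqref{djkol}, with $\E[\tilde F_n^4]-3$ and $\rho(\tilde F_n)$ in place of the corresponding quantities for $F$. We need $\Var(F_n)>0$, which holds for large $n$ since $\Var(F_n)\to\Var(F)=1$. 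The constants, in particular the combinatorial constant $\kappa_p$, are those of the finite theorems and do not depend on $n$.

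The key steps, in order, are the following.

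\begin{enumerate}[1)]
\item \emph{Second moments.} By Proposition \ref{infhoeff}, $F_n\to F$ in $L^2(\P)$, so $\Var(F_n)\to1$ and $\tilde F_n\to F$ in $L^2$.
\item \emph{Influences.} By Proposition \ref{HoeffD} and orthogonality,
\[
\Inf_k(F_n)=\sum_{M\subseteq[n]:\,k\in M}\E[F_M^2]\leq\Inf_k(F).
\]
Hence $\rho(F_n)\leq\rho(F)$ and $\limsup_n\rho(\tilde F_n)\leq\rho(F)$.
\item \emph{Fourth moments.} Since $F\in L^4$, the L\'evy martingale $F_n=\E[F|\F_n]$ converges to $F$ in $L^4(\P)$ by martingale convergence in $L^r$, $r=4$, as recalled in Subsection \ref{Hoeffding}. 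Thus $\E[F_n^4]\to\E[F^4]$ and $\E[\tilde F_n^4]-3\to\E[F^4]-3$.
\item \emph{Passing to the limit on the left.} Both $d_\W$ and $d_\K$ behave well under the approximation. For $d_\W$, $d_\W(F,Z)\leq d_\W(\tilde F_n,Z)+\norm{F-\tilde F_n}_1$ and the last term tends to $0$. For $d_\K$, $\tilde F_n\to F$ in distribution and $Z$ has a continuous law, so $d_\K(\tilde F_n,Z)\to d_\K(F,Z)$: the distribution functions converge at every continuity point of the law of $F$, and one can argue at an arbitrary $x$ by approximating from the right and using the continuity of $\Phi$. Alternatively, lower semicontinuity of $d_\K$ under weak convergence with a continuous limit suffices.
\end{enumerate}

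Taking $\limsup$ in the finite bounds then yields \eqref{djwass} and \eqref{djkol}.

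The main obstacle is to check that the finite theorems of \cite{DP17,Doe23b} apply verbatim to possibly non-symmetric degenerate $U$-statistics in non-identically distributed variables, with the stated numerical constants and a $\kappa_p$ independent of $n$. It must also be checked that they are phrased with $\rho$ defined as the maximal influence, and not as a related quantity such as $\max_k\sum_{M\ni k}\E[F_M^2]$ under a different normalization. If those papers instead phrase the bound via $\sqrt{\max_k\Inf_k}$ but require unit variance, the renormalization in step 1 handles this. Should the finite results not be directly citable in this form, the fallback is to prove the bound directly in the infinite setting. This would use Proposition \ref{cdccovid} together with Stein's method, giving $d_\W(F,Z)\lesssim\E|1-\Gamma_0(-L^{-1}F,F)|+\text{remainder}$ with $-L^{-1}F=F/p$. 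One would then bound $\Var(\Gamma_0(F,F))$ by $\E[F^4]-3$ plus $\kappa_p\rho^2(F)$ via the product formula for Hoeffding components, and bound the non-diffusive remainder via the fourth moments of the $D_kF$. That combinatorial computation would be the genuinely hard part, but the truncation route avoids it.
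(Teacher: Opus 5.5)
Your proposal is correct and follows essentially the same route as the paper. Both arguments truncate to the L\'evy martingale $F_n=\E[F\,|\,\F_n]$, normalize, apply the finite bounds of \cite{DP17,Doe23b}, use $L^4$-convergence for the fourth moment, and pass to the limit through lower semicontinuity of $d_\W$ under $L^1$-convergence and of $d_\K$ under convergence in distribution (exactly Lemma \ref{dislemma}). The one harmless simplification is that you use the monotone bound $\rho(F_n)\leq\rho(F)$ where the paper proves $\rho^2(F_n)\to\rho^2(F)$, and only the upper bound is needed.
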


In particular, Theorem \ref{infdejong} implies the following infinite generalization of a classical CLT by P. de Jong \cite[Theorem 1]{deJo90}. Multivariate and functional extensions of the result in \cite{deJo90} have been provided in \cite{DP17} and \cite{D19}, respectively.

\begin{cor}\label{dejocor}
Fix $p\in\N$ and suppose that, for each $n\in\N$, $F_n$ is a normalized, degenerate $U$-statistic of order $p$ based on an independent, finite or countably infinite sequence $\X_n$ that is defined on some probability space $(\Om_n,\F_n,\P_n)$. If $\lim_{n\to\infty}\E[F_n^4]=3$ and $\lim_{n\to\infty}\rho^2(F_n)=0$, then $F_n$ converges in distribution to $Z\sim N(0,1)$ as $n\to\infty$. 
\end{cor}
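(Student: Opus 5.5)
The plan is to deduce Corollary \ref{dejocor} directly from the Wasserstein bound \eqref{djwass} of Theorem \ref{infdejong}, applied separately for each fixed $n$. No truncation or re-proof of the bounds is needed.

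\textbf{Step 1: embed each $F_n$ into the framework of Section \ref{malliavin}.} Fix $n\in\N$. If $\X_n$ is a finite sequence $(X_{n,1},\dotsc,X_{n,m_n})$, I extend it to a countably infinite independent sequence by appending countably many $\P_n$-a.s. constant coordinates, i.e. copies of a trivial one-point probability space, as described at the beginning of Section \ref{malliavin}. If necessary, I also enlarge $(\Om_n,\F_n,\P_n)$ by a product so that an independent copy $\X_n'$ and the exponential variables exist. None of these changes affects the law of $F_n$.

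Next I check that the Hoeffding decomposition is unchanged. The classical decomposition of $F_n$ with respect to $X_{n,1},\dotsc,X_{n,m_n}$ has components only on sets $M\subseteq[m_n]$ with $|M|=p$. The constant coordinates contribute nothing, because $\E[\,\cdot\,|\,\F_{L}]$ does not change when constant variables are added to $L$, and then \eqref{hdform} forces $F_{n,M}=0$ whenever $M\not\subseteq[m_n]$. Hence $F_n\in\mathcal{H}_{p,\X_n}$. By the paper's definition this also covers the countably infinite case, where $F_n$ is by assumption a completely degenerate $U$-statistic of order $p$ in $\mathcal{H}_{p,\X_n}$. The influences $\rho^2(F_n)$ are likewise unchanged, since $\Inf_k(F_n)=0$ for every constant coordinate. "Normalized" means $\E[F_n^2]=\Var(F_n)=1$; note that $\E[F_n]=0$ automatically since $p\geq1$.

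\textbf{Step 2: check the integrability assumption and apply \eqref{djwass}.} The hypothesis $\E[F_n^4]\to3$ means $\E[F_n^4]<\infty$ for all sufficiently large $n$, so $F_n\in L^4(\P_n)$ from some index $n_0$ on. Discarding finitely many terms does not affect convergence in distribution. For $n\geq n_0$, Theorem \ref{infdejong} gives
\[
d_\W(F_n,Z)\leq\Bigl(\sqrt{\tfrac{2}{\pi}}+\tfrac{4}{3}\Bigr)\sqrt{\babs{\E[F_n^4]-3}}+\sqrt{\kappa_p}\Bigl(\sqrt{\tfrac{2}{\pi}}+\tfrac{2\sqrt2}{\sqrt3}\Bigr)\rho(F_n).
\]
Since $p$ is fixed, $\kappa_p$ is a constant independent of $n$. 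Both terms on the right tend to $0$ by the hypotheses $\E[F_n^4]\to3$ and $\rho(F_n)=\sqrt{\rho^2(F_n)}\to0$. Hence $d_\W(F_n,Z)\to0$.

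\textbf{Step 3: conclude.} Convergence in the Wasserstein distance implies convergence in distribution, since the test class contains all bounded Lipschitz functions. Alternatively, I can use \eqref{djkol} directly, which gives $d_\K(F_n,Z)\to0$ and hence convergence in distribution because $Z$ has a continuous distribution function.

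The only point needing care is Step 1: the reduction of a finite sequence, or one living on varying spaces, to the fixed infinite setup. The key requirement is that neither the degeneracy of $F_n$ nor $\rho^2(F_n)$ changes under padding with constant coordinates. This follows from \eqref{hdform} as indicated above, and the remaining steps are immediate.
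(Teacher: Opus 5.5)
Your proposal is correct and takes the same route as the paper, which states the corollary as an immediate consequence of Theorem \ref{infdejong}: apply \eqref{djwass} (or \eqref{djkol}) to each $F_n$, after embedding a finite sequence into the countable framework by adding trivial one-point coordinates. Your extra checks make explicit what the paper leaves implicit: that this padding preserves degeneracy and $\rho^2(F_n)$, that $F_n\in L^4$ for all large $n$, and that $d_\W(F_n,Z)\to0$ implies convergence in distribution.
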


\begin{remark}\label{dejorem}
\begin{enumerate}[(a)]
\item The bounds in Theorem \ref{infdejong} are direct generalizations, to the setting of an infinite underlying sequence $\X$, of previous bounds by G. Peccati and the author \cite[Theorem 1.3]{DP17} for the Wasserstein distance and by the author \cite[Theorem 2.1]{Doe23b} for the Kolmogorov distance (see the bounds \eqref{djw} and \eqref{djk} below).
\item The constant $\kappa_p$ appearing in the above bounds stems from the article \cite{DP17}, where it is shown that one may choose $\kappa_p=2+C_p$ and the finite combinatorial constant $C_p$ is (rather implicitly) defined in display (4.5) of \cite{DP17}.  
\item Theorem \ref{infdejong} is the counterpart to the quantitative fourth moment theorems on Gaussian \cite{NouPecbook} and Poisson spaces \cite{DP18a, DP18c, DVZ18} and for Rademacher chaos \cite{DK19}. In fact, it is a generalization of \cite[Theorem 1.1]{DK19}. Contrary to the Gaussian and Poisson situations, though, it is in general not possible to remove the quantity $\rho^2(F)$ from the bound as has been shown in \cite[Theorem 1.6]{DK19}.  
\end{enumerate}
\end{remark}

\subsection{Proof of Theorem \ref{infdejong}}\label{djproof}
We make use of the following lemma, the first part of which is certainly known. For the second part, we have not been able to find a suitable reference, though. It is thus of independent interest.

\begin{lemma}\label{dislemma}
Let $Y,W,W_n$, $n\in\N$, be real-valued random variables. 
\begin{enumerate}[{\normalfont(a)}]
\item If $W,W_n\in L^1(\P)$, $n\in\N$, for the same probability space $(\Om,\F,\P)$ and $(W_n)_{n\in\N}$ converges to $W$ in $L^1(\P)$, then $d_\W(W,Y)\leq\liminf_{n\to\infty} d_\W(W_n,Y)$.
\item If $(W_n)_{n\in\N}$ converges in distribution to $W$, then $d_\K(W,Y)\leq\liminf_{n\to\infty} d_\K(W_n,Y)$.
\end{enumerate}
\end{lemma}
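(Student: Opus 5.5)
For part (a) I will use the dual (Kantorovich--Rubinstein) representation $d_\W(W,Y)=\sup_{h}\abs{\E[h(W)]-\E[h(Y)]}$, where the supremum is over all $1$-Lipschitz $h:\R\to\R$. For each fixed such $h$,
\[\babs{\E[h(W)]-\E[h(W_n)]}\leq \E\abs{W-W_n}\longrightarrow0 .\]
Consequently
\[\abs{\E[h(W)]-\E[h(Y)]}=\lim_{n\to\infty}\abs{\E[h(W_n)]-\E[h(Y)]}\leq\liminf_{n\to\infty} d_\W(W_n,Y).\]
Taking the supremum over $h$ gives the claim. If $Y\notin L^1$, both sides may be $+\infty$ and nothing needs proving, with the convention that $d_\W$ is defined through this supremum. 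In fact, Fatou-type lower semicontinuity of $d_\W$ under weak convergence would already suffice here, but $L^1$ convergence makes the argument completely elementary.

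For part (b), write $d_\K(W,Y)=\sup_{x\in\R}\abs{\P(W\leq x)-\P(Y\leq x)}$. The main obstacle is that convergence in distribution only yields $\P(W_n\leq x)\to\P(W\leq x)$ at continuity points $x$ of $F_W(x):=\P(W\leq x)$, while the supremum also ranges over atoms of $W$. I would proceed as follows. Set $c:=\liminf_n d_\K(W_n,Y)$. First, for every continuity point $x$ of $F_W$ the pointwise convergence gives $\abs{F_W(x)-F_Y(x)}\leq c$. Next, let $x$ be arbitrary and choose continuity points $x_m\downarrow x$ of $F_W$; this is possible because the discontinuity set of $F_W$ is countable. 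By right-continuity of both distribution functions, $F_W(x_m)\to F_W(x)$ and $F_Y(x_m)\to F_Y(x)$, and hence $\abs{F_W(x)-F_Y(x)}\leq c$.

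It would be tempting to worry about left limits, but this is unnecessary: the Kolmogorov distance is defined through the values $\P(\cdot\leq x)$, which are right-continuous, so the approximation from the right covers every $x\in\R$. Taking the supremum over $x$ finishes the proof. I would note explicitly that no continuity assumption on $Y$ is needed, which is precisely what makes (b) slightly less standard than the usual statement for a continuous limit.
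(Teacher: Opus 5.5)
Your proof is correct. Part (a) is the paper's argument: fix $h\in\Lip(1)$, use $|\E[h(W)]-\E[h(W_n)]|\leq\E|W-W_n|$, let $n\to\infty$ and take the supremum over $h$.

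Part (b) takes a somewhat different, shorter route at a discontinuity point $z$ of $F_W$. The paper proves the two one-sided inequalities separately.
\begin{itemize}
\item For $F_W(z)-F_Y(z)$ it picks a continuity point $z_\eps\in(z,z+\eps)$. It then uses only monotonicity, $F_W(z)\leq F_W(z_\eps)$, and absorbs the error $F_Y(z_\eps)-F_Y(z)$ through right-continuity of $F_Y$ as $\eps\downarrow0$.
\item For $F_Y(z)-F_W(z)$ it invokes the Portmanteau theorem for the closed set $(-\infty,z]$, that is, $\limsup_n\P(W_n\leq z)\leq\P(W\leq z)$.
\end{itemize}
You instead approximate an arbitrary $x$ from the right by continuity points $x_m\downarrow x$ of $F_W$ and use right-continuity of both $F_W$ and $F_Y$. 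This transfers the two-sided bound $|F_W(x_m)-F_Y(x_m)|\leq c$ to $x$ in one step and avoids Portmanteau entirely. The paper's version gains nothing essential; it merely avoids using right-continuity of $F_W$ in one direction, at the cost of a second tool.

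One small wording point. What makes (b) non-trivial, and what your argument genuinely does not need, is continuity of $F_W$ (the distribution function of the limit). That is exactly what blocks the naive route via $d_\K(W,W_n)\to0$, as the paper's remark notes. Your argument also needs no continuity of $F_Y$, but $W$, not $Y$, is the "limit" in the usual statement.
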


\begin{remark}
\begin{enumerate}[(a)]
\item Note that the statement of Lemma \ref{dislemma} is reminiscent of the well-known behaviour of norms under weak and weak-$\ast$ convergence in a functional analysis context. It is unclear to the author if this observation may be turned into a rigorous proof though.
\item Note that part (b) of Lemma \ref{dislemma} does not simply follow from the triangle inequality $d_\K(W,Y)\leq d_\K(W,W_n)+d_\K(W_n,Y)$ by letting $n\to\infty$, since, in general, $d_\K(W,W_n)\rightarrow0$ as $n\to\infty$ only holds, when the distribution function of $W$ is continuous. This, however, is not guaranteed in our application of Lemma \ref{dislemma} to the proof of Theorem \ref{infdejong}.
\end{enumerate}
\end{remark}

\begin{proof}[Proof of Lemma \ref{dislemma}]
To prove (a) fix $h\in\Lip(1)$. Then, 
\begin{align*}
&\babs{\E[h(W)]-\E[h(Y)]}\leq \babs{\E[h(W)]-\E[h(W_n)]}+\babs{\E[h(W_n)]-\E[h(Y)]}\\
&\leq  \babs{\E[h(W)]-\E[h(W_n)]}+d_\W(W_n,Y)\leq \E\babs{W-W_n}+d_\W(W_n,W).
\end{align*}
Hence, as the left hand side does not depend on $n$ it follows from $L^1$-convergence that 
\[\babs{\E[h(W)]-\E[h(Y)]}\leq\liminf_{n\to\infty}\Bigl(\E\babs{W-W_n}+d_\W(W_n,Y)\Bigr)=\liminf_{n\to\infty}d_\W(W_n,Y).\]
Taking the supremum over all $h\in\Lip(1)$ yields (a). 

To  prove (b) fix $z\in\R$. If $z$ is a continuity point of the distribution function $F_W$ of $W$, then, by convergence in distribution, we have
\begin{align*}
\babs{\P(W\leq z)-\P(Y\leq z)}=\lim_{n\to\infty}\babs{\P(W_n\leq z)-\P(Y\leq z)}\leq \liminf_{n\to\infty} d_\K(W_n,Y).
\end{align*}
On the other hand, if $F_W$ is discontinuous at $z$, then, for given $\eps>0$ we may choose a continuity point $z_\eps$ of $F_W$ such that $z<z_\eps<z+\eps$. Then, again by convergence in distribution, 
we have  
\begin{align*}
&\P(W\leq z)-\P(Y\leq z)\leq \P(W\leq z_\eps)-\P(Y\leq z_\eps) +  \P(z< Y\leq z_\eps) \notag\\
& \leq \babs{\P(W\leq z_\eps)-\P(Y\leq z_\eps)} +F_Y(z+\eps)-F_Y(z)\notag\\
&=\lim_{n\to\infty} \babs{\P(W_n\leq z_\eps)-\P(Y\leq z_\eps)}+F_Y(z+\eps)-F_Y(z)\notag\\
&\leq \liminf_{n\to\infty} d_\K(W_n,Y)+F_Y(z+\eps)-F_Y(z)
\end{align*}
and, since $\eps>0$ was arbitrary and the distribution function $F_Y$ of $Y$ is right-continuous at $z$, letting $\eps\downarrow0$, we obtain that 
\begin{align}\label{kol2}
\P(W\leq z)-\P(Y\leq z)\leq\liminf_{n\to\infty} d_\K(W_n,Y). 
\end{align} 
Moreover, again by convergence in distribution and since $(-\infty,z]$ is a closed subset of $\R$, the Portmanteau theorem implies that
\begin{align}\label{kol1}
\P(Y\leq z)-\P(W\leq z)&\leq \P(Y\leq z)-\limsup_{n\to\infty}\P(W_n\leq z)\notag\\
&= \liminf_{n\to\infty}\bigl(\P(Y\leq z)-\P(W_n\leq z)\bigr)\leq \liminf_{n\to\infty}d_\K(W_n,Y).
\end{align}
Since $z\in\R$ was arbitrary, \eqref{kol2} and \eqref{kol1} imply (b).
\end{proof}

\begin{proof}[Proof of Theorem \ref{infdejong}]
Define the $\mathbb{F}$-martingale $(F_n)_{n\in\N}$, where 
\[F_n:=\E\bigl[F\,\big|\,\F_n\bigr]=\sum_{M\subseteq[n]: |M|=p} F_M,\quad n\in\N.\]
In particular, for $n\in\N$, $F_n\in L^4_\X$ is a degenerate $U$-statistic of order $p$ based on $X_1,\dotsc,X_n$ and $(F_n)_{n\in\N}$ converges to $F$ $\P$-a.s. and in $L^4(\P)$ by the $L^4$-convergence theorem for martingales. Further, for $n\in\N$, let $\sigma_n^2:=\Var(F_n)$ and note that $\lim_{n\to\infty}\sigma_n^2=1$ and, hence, we may w.l.o.g. assume that $\sigma_n^2>0$ and define $G_n:=\sigma_n^{-1} F_n$ for any $n\in\N$.  Then, we have that also $(G_n)_{n\in\N}$ converges in $L^4(\P)$ and, hence, in particular in $L^1(\P)$ and in distribution to $F$. Since $G_n$ is normalized, from \cite[Theorem 1.3]{DP17} and \cite[Theorem 2.1]{Doe23b}, for any $n\in\N$, we have the bounds
 \begin{align}
d_\W(G_n,Z)&\leq \Biggl(\sqrt{\frac{2}{\pi}}+\frac{4}{3}\Biggr)\sqrt{\babs{\E[G_n^4]-3}}+\sqrt{\kappa_p}\Biggl(\sqrt{\frac{2}{\pi}}+ \frac{2\sqrt{2}}{\sqrt{3}}\Biggr)\rho(G_n)\quad\text{and}  \label{djw}\\
d_\K(G_n,Z)&\leq  11.9\sqrt{\babs{\E[G_n^4]-3}} +\bigl(3.5+10.8\sqrt{\kappa_p}\bigr)\rho(G_n).\label{djk}
\end{align} 
Now, noting that, for each $k\in\N$, as $n\to\infty$,
\begin{align*}
\Inf_k(F_n)&=\sum_{M\subseteq[n]:k\in M}\E[F_M^2]\Big\uparrow \sum_{M\subseteq\N:k\in M}\E[F_M^2]=\Inf_k(F)
\end{align*}
we obtain that also $\rho(F_n^2)$ is increasing in $n$ and, thus, 
\begin{align}\label{dj4}
\lim_{n\to\infty}\rho^2(F_n)=\sup_{n\in\N}\sup_{k\in\N}\Inf_k(F_n)=\sup_{k\in\N}\sup_{n\in\N}\Inf_k(F_n)=\sup_{k\in\N}\Inf_k(F)=\rho^2(F).
\end{align}
Furthermore, as $\rho(G_n)^2=\sigma_n^{-2}\rho^2(F_n)$, $n\in\N$, from \eqref{dj4} we have that $\lim_{n\to\infty}\rho^2(G_n)=\rho^2(F)$. Thus, from Lemma \ref{dislemma}, \eqref{djw}, \eqref{djk} and the $L^4$-convergence of $(G_n)_{n\in\N}$ to $F$ we conclude that 
\begin{align*}
d_\W(F,Z)&\leq\liminf_{n\to\infty}d_\W(G_n,Z)\leq \Biggl(\sqrt{\frac{2}{\pi}}+\frac{4}{3}\Biggr)\sqrt{\babs{\E[F^4]-3}}+\sqrt{\kappa_p}\Biggl(\sqrt{\frac{2}{\pi}}+ 
\frac{2\sqrt{2}}{\sqrt{3}}\Biggr)\rho(F)
\end{align*} 
and 
\begin{align*}
d_\K(F,Z)&\leq\liminf_{n\to\infty}d_\K(G_n,Z)\leq 11.9\sqrt{\babs{\E[F^4]-3}} +\bigl(3.5+10.8\sqrt{\kappa_p}\bigr)\rho(F),
\end{align*} 
completing the proof of Theorem \ref{infdejong}.
\end{proof}

\section{A probabilistic proof of Proposition \ref{infhoeff}} \label{appendix}
In this section we give a self-contained proof of Proposition \ref{infhoeff} on infinite Hoeffding decompositions. We first review the concepts of unconditional convergence and summability in Banach spaces.

\subsection{Unconditional convergence and summability in Banach spaces } 
Let $(B,\norm{\cdot})$ be a normed space, let $\emptyset\not=I$ be an index set and let $(x_i)_{i\in I}$ be a family of vectors in $B$. 

The (symbolic) series $\sum_{i\in I} x_i$ is said to \textit{converge unconditionally} to the 
vector $x\in B$, if the following two conditions hold:
\begin{itemize}
 \item The set $I^*:=\{i\in I\,:\, x_i\not=0\}$ is at most countably infinite.
 \item If $\abs{I^*}=\infty$ and $I^*=\{i_n\, :\, n\in\N\}$ is an enumeration of $I^*$, then the series $\sum_{n=1}^\infty x_{i_n}$ converges to $x$, i.e.
 \[\lim_{m\to\infty}\Bigl\|\sum_{n=1}^m x_{i_n} -x\Bigr\|=0\,.\]
\end{itemize}
On the other hand, the family $(x_i)_{i\in I}$ is called \textit{summable}, if there is an $s\in B$ with the following property: For each $\epsilon>0$ there is a finite set $I_\eps\subseteq I$ such that 
for all finite $J$ with $I_\eps\subseteq J\subseteq I$ we have 
\[\Bigl\|\sum_{i\in J} x_i -s\Bigr\|<\eps\,.\]
It is not difficult to see that such an $s$ is necessarily unique. Moreover, it is a standard result in functional analysis that summability and unconditional convergence of series in Banach spaces are equivalent and that, with the above notation, one has $x=s$.

In finite-dimensional spaces, these conditions are further equivalent to absolute convergence of the series, i.e. to the condition that $\sum_{n\in\N}\norm{x_{i_n}}<\infty$. 

For infinite-dimensional Banach spaces $B$, however, according to the famous \textit{Dvoretzky-Rogers theorem} \cite{DvoRo}, there always exists a series $\sum_{i\in I} x_i$ that is unconditionally but not absolutely convergent.

The following result about the unconditional convergence of families of centered and uncorrelated random variables in $L^2(\P)$ is certainly known. As we have not found a suitable reference, though, we include its statement and also provide a complete proof.

\begin{prop}\label{serieslemma}
Let $\emptyset\not=I$ be 
an index set and suppose that $(X_i)_{i\in I}$ is a family of square-integrable, centered and pairwise uncorrelated random variables in $L^2(\P)$ for some probability space $(\Omega,\A,\P)$. Assume that the family $(\Var(X_i))_{i\in I}$ of nonnegative real numbers satisfies
\[\sup\Bigl\{\sum_{i\in J}\Var(X_i)\,:\,J\subseteq I\text{ finite }\Bigr\}<\infty. \]
Then, there is a random variable $S\in L^2(\P)$ such that the series $\sum_{i\in I} X_i$ converges unconditionally to $S$ in the $L^2(\P)$-sense.
\end{prop}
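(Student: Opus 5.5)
We will use the Pythagorean identity for orthogonal vectors, and we will use completeness of $L^2(\P)$ only once, when identifying the limit. Write $C:=\sup\{\sum_{i\in J}\Var(X_i): J\subseteq I \text{ finite}\}<\infty$.

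\emph{Countability of $I^*$.} Since the $X_i$ are centered, $X_i\not=0$ in $L^2(\P)$ if and only if $\Var(X_i)>0$. For each $m\in\N$ the set $I_m:=\{i\in I:\Var(X_i)>1/m\}$ has at most $mC$ elements, because any finite subset $J\subseteq I_m$ satisfies $|J|/m<\sum_{i\in J}\Var(X_i)\leq C$. Hence $I^*=\bigcup_{m\in\N}I_m$ is at most countable. If $I^*$ is finite, the statement is trivial with $S:=\sum_{i\in I^*}X_i$. We therefore assume $|I^*|=\infty$.

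\emph{Existence of a limit along one enumeration.} Fix an enumeration $I^*=\{i_n:n\in\N\}$ and put $S_m:=\sum_{n=1}^m X_{i_n}$. Because the summands are pairwise uncorrelated and centered, for $l>m$ we have $\E[(S_l-S_m)^2]=\sum_{n=m+1}^l\Var(X_{i_n})$. The partial sums of the nonnegative series $\sum_n\Var(X_{i_n})$ are bounded by $C$, so this series converges. Hence $(S_m)$ is Cauchy in $L^2(\P)$, and by completeness it converges to some $S\in L^2(\P)$. This is the only place completeness is used.

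\emph{Independence of the enumeration.} This is the step that needs the most care. Let $I^*=\{j_n:n\in\N\}$ be any other enumeration, with partial sums $T_m$. We show directly that $T_m\to S$, rather than invoking the abstract equivalence of summability and unconditional convergence. Fix $\eps>0$ and choose $N$ such that $\sum_{n>N}\Var(X_{i_n})<\eps$; this is possible since $\sum_{i\in I^*}\Var(X_i)\leq C$. Next choose $m_0$ such that $\{i_1,\dotsc,i_N\}\subseteq\{j_1,\dotsc,j_{m_0}\}$. For $m\geq m_0$ and any $l$ large enough that $\{j_1,\dotsc,j_m\}\subseteq\{i_1,\dotsc,i_l\}$, the difference $S_l-T_m$ is a finite sum of distinct $X_{i_n}$ with $n>N$. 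Orthogonality then gives $\E[(S_l-T_m)^2]\leq\sum_{n>N}\Var(X_{i_n})<\eps$. Letting $l\to\infty$ yields $\norm{S-T_m}_2^2\leq\eps$ for all $m\geq m_0$. Therefore every enumeration converges to the same $S$, which is precisely unconditional convergence to $S$ in the $L^2(\P)$-sense.
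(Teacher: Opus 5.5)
Your proof is correct and follows essentially the same route as the paper. Both arguments use a Cauchy estimate in $L^2(\P)$ along one fixed enumeration via orthogonality, then a tail-variance comparison to show that every rearrangement has the same limit. The differences are minor streamlinings: you prove the countability of $I^*$ explicitly, which the paper only asserts, and you show directly that the rearranged partial sums converge to $S$, whereas the paper first obtains a limit $S_\pi$ and then bounds $\norm{S-S_\pi}_2$ by a three-term triangle inequality.
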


\begin{proof}
It is not difficult to see that the assumption implies that the set $I^*:=\{i\in I\,:\, \Var(X_i)\not=0\}$ is countable and we may assume without loss of generality that it is countably infinite, since in the finite case the result is immediate.
Let us first fix an enumeration $(i_n)_{n\in\N}$ of $I^*$. Then, we obviously have 
\[\sum_{n=1}^\infty \Var(X_{i_n})=\sup\Bigl\{\sum_{i\in J}\Var(X_i)\,:\,J\subseteq I\text{ finite }\Bigr\}<\infty.\]
Therefore, using orthogonality, for integers $1\leq m<n$ it follows that 
\begin{align*}
 \Bigl\|\sum_{k=m+1}^n X_{i_k}\Bigr\|_2^2&=\sum_{k,l=m+1}^n\E\bigl[X_{i_k}X_{i_l}\bigr]=\sum_{k=m+1}^n\Var\bigl(X_{i_k}\bigr)
 \longrightarrow0\,,\quad\text{as }n,m\to\infty.
\end{align*}
Hence, the sequence $(S_n)_{n\in\N}$ with $S_n=\sum_{k=1}^n X_{i_k}$ is Cauchy in $L^2(\P)$ and converges by completeness to some $S\in L^2(\P)$. 
It remains to make sure that this random variable $S$ does not depend on the particular enumeration chosen. Thus, let $\pi:\N\rightarrow\N$ be any bijection. Then, as above, one obtains that also the series 
$\sum_{k=1}^\infty X_{i_{\pi(k)}}$ converges to some $S_\pi\in L^2(\P)$. We show that $S=S_\pi$ $\P$-a.s. Let $\eps>0$ be given and choose $n_0=n_0(\eps)\in\N$ such that 
\[\sum_{k=n_0+1}^\infty \Var\bigl(X_{i_k}\bigr)<\eps^2\,.\]
Furthermore, choose $m_0=m_0(\eps)\in\N$ such that $[n_0]\subseteq\pi([m_0])$. Then, by the triangle inequality, we obtain 
\begin{align*}
 \Enorm{S-S_\pi}&\leq\BEnorm{S-\sum_{k=1}^{n_0}X_{i_k}}+\BEnorm{\sum_{k=1}^{n_0}X_{i_k}-\sum_{k=1}^{m_0}X_{i_{\pi(k)}}}+\BEnorm{\sum_{k=1}^{m_0}X_{i_{\pi(k)}}-S_\pi}\\
 &\leq \Bigl(\sum_{k=n_0+1}^\infty\Var\bigl(X_{i_k}\bigr)\Bigr)^{1/2}+\Bigl(\sum_{l\in\pi([m_0])\setminus[n_0]}\Var\bigl(X_{i_{l}}\bigr)\Bigr)^{1/2}\\
&\; +\Bigl(\sum_{l\in\N\setminus\pi([m_0])}\Var\bigl(X_{i_l}\bigr)\Bigr)^{1/2}\leq 3 \Bigl(\sum_{k=n_0+1}^\infty\Var\bigl(X_{i_k}\bigr)\Bigr)^{1/2}<3\epsilon\,.
\end{align*}
Since $\eps>0$ was arbitrary, this implies that $S=S_\pi$ $\P$-a.s.and $\sum_{i\in I} X_i$ converges unconditionally to $S$.\\
\end{proof}

\subsection{Proof of Proposition \ref{infhoeff} }
Recall that $F_M=F_{\max(M),M}=F_{n,M}$ for all finite $M\subseteq\N$ and all $n\geq\max(M)$. 
 We first prove \eqref{genhd2}. Due to \eqref{cons3}, for $n\in\N$ we have 
 \begin{align}\label{hd1}
  F_n&=\sum_{\substack{M\subseteq\N:\\ \max(M)\leq n}}F_{M}.
\end{align}
Hence, using orthogonality and \eqref{l2bound2} we obtain 
\begin{align}\label{hd2}
 \sum_{\substack{M\subseteq\N:\\ \max(M)\leq n}}\Var\bigl(F_{M}\bigr)&
 =\Var(F_n)\leq \Var(F)\,.
\end{align}
As the right hand side of the inequality \eqref{hd2} does not depend on $n$ we conclude that 
\begin{align}\label{hd3}
 \sum_{\substack{M\subseteq\N:\\ \abs{M}<\infty}}\Var\bigl(F_{M}\bigr)&=\lim_{n\to\infty}\sum_{\substack{M\subseteq\N:\\ \max(M)\leq n}}\Var\bigl(F_{M}\bigr)=\lim_{n\to\infty}\Var(F_n)=\Var(F)<+\infty\,,
\end{align}
where we have used the $L^2(\P)$ convergence of $(F_n)_{n\in\N}$ to $F$ to obtain the last identity. As the summands are pairwise orthogonal and $L^2(\P)$ is a Banach space, by Proposition \ref{serieslemma} this already implies that the first infinite series $\sum_{\substack{M\subseteq\N: \abs{M}<\infty}}F_{M} $ on the right hand side of \eqref{genhd2} converges unconditionally in the $L^2(\P)$-sense to some limit $\tilde{F}\in L^2_\X$. We need to make sure that $\tilde{F}=F$ holds $\P$-a.s. As $(F_n)_{n\in\N}$ converges to $F$ in $L^2(\P)$ this will follow if we can show that also $\tilde{F}$ is the $L^2(\P)$-limit of $(F_n)_{n\in\N}$. Let $(M_k)_{k\in\N}$ be an enumeration of the finite subsets of $\N$. Then, by unconditional convergence, we have 
\[\tilde{F}=\sum_{k=1}^\infty F_{M_k}\]
in the $L^2(\P)$-sense. Thus, given $\eps>0$ we may choose $k_0\in\N$ such that 
\[\Biggl(\sum_{k=k_0+1}^\infty\E\bigl[F_{M_k}^2\bigr]\Biggr)^{1/2}=\BEnorm{\sum_{k=k_0+1}^\infty F_{M_k}}=\BEnorm{\tilde{F}-\sum_{k=1}^{k_0} F_{M_k}} <\eps.\]
Then, for $n\geq n_0$, where $n_0\in\N$ is chosen in such a way that $M_1,\dotsc,M_{k_0}\subseteq[n_0]$ it holds that
\begin{align*}
 \Enorm{\tilde{F}-F_n}&=\Biggl(\sum_{\substack{k\in\N:\\ M_k\not\subseteq [n]}}\E\bigl[F_{M_k}^2\bigr]\Biggr)^{1/2}\leq \Biggl(\sum_{k=k_0+1}^\infty\E\bigl[F_{M_k}^2\bigr]\Biggr)^{1/2}
 <\eps,
\end{align*}
as desired. Thus, $F=\tilde{F}$ $\P$-a.s.
This proves the first identity in \eqref{genhd2}. By the same argument with obvious adaptations one shows that \eqref{genhd1} is true for each $p\in\N_0$. 
It remains to prove the second identity in \eqref{genhd2}. Since 
\begin{align*}
 &\sup_{m\in\N}\sum_{p=0}^m\Var\bigl(F^{(p)}\bigr)= \sup_{m\in\N}\lim_{n\to\infty}\sum_{p=0}^m\Var\bigl(F^{(p)}_n\bigr)=\sup_{m\in\N}\sup_{n\in\N}\sum_{p=0}^m\Var\bigl(F^{(p)}_n\bigr)\\
 &=\sup_{n\in\N}\sup_{m\in\N}\sum_{p=0}^m\Var\bigl(F^{(p)}_n\bigr)=\sup_{n\in\N}\sum_{p=0}^n\Var\bigl(F^{(p)}_n\bigr)=\sup_{n\in\N}\Var(F_n)=\Var(F)<\infty,
\end{align*}
it follows from \eqref{orthrel} and Proposition \ref{serieslemma} again that the series $\sum_{p=0}^\infty F^{(p)}$ converges unconditionally to some $T\in L^2_\X$ in the $L^2(\P)$-sense. Given $\eps>0$ we can hence find an $m_0\in\N$ such that for all $m\geq m_0$ 
\[\Biggl(\sum_{p=m+1}^\infty\E\Bigl[\bigl(F^{(p)} \bigl)^2\Bigr]\Biggr)^{1/2}=\BEnorm{\sum_{p=m+1}^\infty F^{(p)}}=\BEnorm{T-\sum_{p=0}^{m} F^{(p)}}<\eps.\]
Then, for given $\eps>0$ with $(M_k)_{k\in\N}$ and $k_0$ as above we choose $m_1\geq m_0$ large enough to ensure that $|M_{k}|\leq m_1$ for $k=1,\dotsc,k_0$. Then, we have 
\begin{align*}
 \Enorm{T-F}&\leq\BEnorm{T-\sum_{p=0}^{m_1} F^{(p)}}+\BEnorm{\sum_{p=0}^{m_1} F^{(p)}-\sum_{k=1}^{k_0}F_{M_k}}+\BEnorm{\sum_{k=k_0+1}^{\infty}F_{M_k}}\\
 &\leq \BEnorm{\sum_{p=m_1+1}^\infty F^{(p)}}+2\BEnorm{\sum_{k=k_0+1}^{\infty}F_{M_k}}<3\eps,
\end{align*}
where we have used the fact that the convergence in 
\[\sum_{p=0}^{m_1} F^{(p)}=\sum_{\substack{M\subseteq\N:\\ |M|\leq m_1}} F_M\]
is again unconditional, which follows from the unconditional convergence in \eqref{genhd1} and the fact that summability is obviously preserved under addition of finitely many series.
Thus, $T=F$ $\P$-a.s. which finishes the proof of Proposition \ref{infhoeff}.

\normalem
\bibliography{malliavin_product}{}
\bibliographystyle{alpha}
\end{document}